\newcommand{\spacedcdot}{{\,\cdot\,}}
\newcommand{\Q}{{\mathbb{Q}}}
\newcommand{\C}{{\mathbb{C}}}
\newcommand{\R}{{\mathbb{R}}}
\newcommand{\Z}{{\mathbb{Z}}}
\newcommand{\Lie}{{\mathrm{Lie}}}
\newcommand{\GL}{{\mathrm{GL}}}
\newcommand{\Sp}{{\mathrm{Sp}}}
\newcommand{\Diag}{{\mathrm{Diag}}}
\DeclareMathOperator{\vol}{{\mathrm{vol}}}
\DeclareMathOperator{\artanh}{{\mathrm{artanh}}}
\DeclareMathOperator{\tr}{{\mathrm{tr}}}
\newcommand{\calA}{{\mathcal{A}}}
\newcommand{\calD}{{\mathcal{D}}}
\newcommand{\calH}{{\mathcal{H}}}
\newcommand{\calU}{{\mathcal{U}}}
\newcommand{\calZ}{{\mathcal{Z}}}
\newcommand{\frakg}{{\mathfrak{g}}}
\providecommand{\abs}[1]{\left\lvert#1\right\rvert}
\providecommand{\norm}[1]{\left\lVert#1\right\rVert}
\providecommand{\scal}[2]{\left<#1,#2\right>}
\newtheorem{theorem}{Theorem}[section]
\newtheorem{lemma}[theorem]{Lemma}
\newtheorem{proposition}[theorem]{Proposition}
\newtheorem{corollary}[theorem]{Corollary}
\theoremstyle{definition}
\theoremstyle{remark}
\newtheorem*{remark*}{Remark}
\newtheoremstyle{named}{}{}{\itshape}{}{\bfseries}{.}{.5em}{#1 \thmnote{#3}}
\theoremstyle{named}
\newtheorem*{namedtheorem}{Theorem}
\newtheorem*{namedcorollary}{Corollary}
\newtheorem*{namedproposition}{Proposition}
\numberwithin{equation}{section}
\title{ On a family of Siegel Poincar\'e series}
\author{Sonja \v Zunar}
\address{Faculty of Geodesy,
	University of Zagreb,
	Ka\v ci\'ceva 26,
	10000 Zagreb,
	Croatia}
\email{szunar@geof.hr}
\subjclass[2020]{11F03, 11F46}
\thanks{The author acknowledges the Croatian Science Foundation grant IP-2018-01-3628.}
\keywords{Siegel cusp forms, Poincar\'e series}
\begin{document}

\begin{abstract}
	Let $ \Gamma $ be a congruence subgroup of $ \Sp_{2n}(\Z) $.
	Using Poincar\'e series of $ K $-finite matrix coefficients of integrable discrete series representations of $ \Sp_{2n}(\R) $, we construct a spanning set for the space $ S_m(\Gamma) $ of Siegel cusp forms of weight $ m\in\Z_{>2n} $. We prove the non-vanishing of certain elements of this spanning set using Mui\'c's integral non-vanishing criterion for Poincar\'e series on locally compact Hausdorff groups. Moreover, using the representation theory of $ \Sp_{2n}(\R) $, we study the Petersson inner products of corresponding cuspidal automorphic forms, thereby recovering a representation-theoretic proof of some well-known results on the reproducing kernel function of $ S_m(\Gamma) $. Our results are obtained by generalizing representation-theoretic methods developed by Mui\'c in his work on holomorphic cusp forms on the upper half-plane to the setting of Siegel cusp forms of a higher degree.
\end{abstract}

\maketitle

\section{Introduction}

In this paper, we generalize representation-theoretic methods used by G.\ Mui\'c in his work on holomorphic cusp forms of integral weight on the upper half-plane \cite{muic10, muic12} to the setting of Siegel cusp forms of a higher degree. Employing said methods, we construct a spanning set for the space $ S_m(\Gamma) $ of Siegel cusp forms of degree $ n\in\Z_{>0} $ and weight $ m\in\Z_{>2n} $ with respect to any congruence subgroup $ \Gamma $ of $ \Sp_{2n}(\Z) $ and shed some light on certain properties of the constructed Siegel cusp forms.

To explain our results, we recall that the space $ S_m(\Gamma) $ consists of the holomorphic functions $ f $ on the Siegel upper half-space $ \calH_n=\left\{z=x+iy\in M_n(\C):z^\top=z,\ y>0\right\} $ such that $ \sup_{z\in\calH_n}\abs{f(z)}\det y^{\frac m2}<\infty $ and 
\[ \left(f\big|_m\gamma\right)(z):=\det(Cz+D)^{-m}f\left((Az+B)(Cz+D)^{-1}\right)=f(z) \]	
for all $ \gamma=\begin{pmatrix}A&B\\C&D\end{pmatrix}\in\Gamma $ and $ z\in\calH_n $. Equipped with the Petersson inner product
	\[ \scal{f_1}{f_2}_{\Gamma\backslash\calH_n}=\frac1{\abs{\Gamma\cap\left\{\pm I_{2n}\right\}}}\int_{\Gamma\backslash\calH_n}f_1(z)\,\overline{f_2(z)}\,\det y^m\,d\mathsf v(z), \]
where $ d\mathsf v(z)=(\det y)^{-n-1}\,\prod_{1\leq r\leq s\leq n}dx_{r,s}\,dy_{r,s} $, $ S_m(\Gamma) $ is a finite-dimensional Hilbert space. One of our main results is the following theorem.
\begin{namedtheorem}[\ref{thm:069}]
	Let $ \Gamma $ be a congruence subgroup of $ \Sp_{2n}(\Z) $, and let $ m\in\Z_{>2n} $. For every polynomial $ \mu\in\C[X_{r,s}:1\leq r,s\leq n] $, let $ f_{\mu,m}:\calH_n\to\C $,
	\[ 	f_{\mu,m}(z)=(2i)^{mn}\,\frac{\mu((z-iI_n)(z+iI_n)^{-1})}{\det(z+iI_n)^m}. \]
	Then, the Poincar\'e series 
	\[ P_{\Gamma}f_{\mu,m}=\sum_{\gamma\in\Gamma}f_{\mu,m}\big|_m\gamma \] 
	converges absolutely and uniformly on compact subsets of $ \calH_n $, and we have
		\[ S_m(\Gamma)=\left\{P_\Gamma f_{\mu,m}:\mu\in\C[X_{r,s}:1\leq r,s\leq n]\right\}.  \]
\end{namedtheorem}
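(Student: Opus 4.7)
My plan is to realize $f_{\mu,m}$ as the lift of a $K$-finite matrix coefficient of a holomorphic discrete series representation $\pi_m$ of $G = \Sp_{2n}(\R)$, and then invoke the general theory of Poincar\'e series attached to such matrix coefficients. I would first lift $f_{\mu,m}$ to a function $\Phi_{\mu,m}\colon G\to\C$ via $\Phi_{\mu,m}(g) = (f_{\mu,m}|_m g)(iI_n)$. The Cayley transform $z\mapsto (z-iI_n)(z+iI_n)^{-1}$ intertwines the $G$-action on $\calH_n$ with the action on the Siegel disk, so polynomials $\mu$ in $(z-iI_n)(z+iI_n)^{-1}$ correspond, via the Harish-Chandra realization of $\pi_m$, to $K$-finite vectors $v_\mu\in\pi_m$: the $K$-type decomposition of $\pi_m$ is $\det{}^m\otimes S(\frakp^-)$, and polynomial functions on the Siegel disk exhaust $S(\frakp^-)$. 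Up to a constant, $\Phi_{\mu,m}(g)$ is then the matrix coefficient $\scal{\pi_m(g) v_m}{v_\mu}$, where $v_m$ is a lowest weight vector.

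For convergence, since $m>2n$ the discrete series $\pi_m$ is integrable, so all of its $K$-finite matrix coefficients lie in $L^1(G)$. The general convergence result for Poincar\'e series of $L^1$ matrix coefficients (developed earlier in the paper, following Mui\'c) then gives absolute and locally uniform convergence of $\sum_{\gamma\in\Gamma}\Phi_{\mu,m}(\gamma g)$ on $G$, which descends to absolute and uniform convergence of $P_\Gamma f_{\mu,m}$ on compact subsets of $\calH_n$ and places $P_\Gamma f_{\mu,m}$ in $S_m(\Gamma)$.

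For the spanning statement, I would show that the orthogonal complement of $\{P_\Gamma f_{\mu,m}\}_{\mu}$ in $S_m(\Gamma)$ is zero. For $g\in S_m(\Gamma)$, unfolding the Petersson inner product yields
\[ \scal{P_\Gamma f_{\mu,m}}{g}_{\Gamma\backslash\calH_n} = c\,\scal{v_\mu}{\phi_g}_{\pi_m}, \]
where $\phi_g$ is the automorphic vector associated to $g$ in the $\pi_m$-isotypic component and $c$ is a nonzero constant independent of $\mu$. As $\mu$ ranges over $\C[X_{r,s}\colon 1\leq r,s\leq n]$, the vectors $v_\mu$ exhaust the $K$-finite vectors of $\pi_m$, which are dense, so vanishing of the left-hand side for every $\mu$ forces $\phi_g=0$ and hence $g=0$. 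The main obstacle will be justifying this unfolding: because $f_{\mu,m}$ is merely $L^1$ (not $L^2$) on $\Gamma\backslash\calH_n$, Fubini must be applied in a form compatible with integrable-only data, and the resulting integral must be identified cleanly with the representation-theoretic inner product on $\pi_m$ via the reproducing property of the matrix coefficient against the lowest weight vector. Once this identification is set up and the $K$-finite structure of $\pi_m$ is matched with polynomials in the Cayley variable, both the convergence and the spanning assertions follow.
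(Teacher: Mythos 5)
Your treatment of convergence coincides with the paper's: Prop.\ \ref{prop:057}\ref{prop:057:2} identifies $F_{f_{\mu,m}}$, up to the constant $C_{m,n}$, with the $K$-finite matrix coefficient $c_{f_{1,m}^*,f_{\mu,m}^*}$ of the \emph{contragredient} $\pi_m^*$ (so the complex conjugate of your $\scal{\pi_m(g)v_m}{v_\mu}$ --- harmless for integrability), and then Lem.\ \ref{lem:058} and the results quoted at the start of Sect.\ \ref{sec:039} give absolute, locally uniform convergence, which descends to $\calH_n$ by Lem.\ \ref{lem:035}. For the spanning statement you take a genuinely different route. The paper computes no inner products at this stage: it chains Prop.\ \ref{prop:034} ($\Lambda_m$ identifies $S_m(\Gamma)$ with $\calA_{cusp}(\Gamma\backslash\Sp_{2n}(\R))_{[\chi_m]}^{\frakp_\C^-}$), Prop.\ \ref{prop:081} (this equals the $\chi_m$-isotypic part of $L^2_{cusp}(\Gamma\backslash\Sp_{2n}(\R))_{[\pi_m^*]}$), and Prop.\ \ref{prop:029} (Mili\v ci\'c's density result, Lem.\ \ref{lem:023}, refined by $K$-types), obtaining the equality of \emph{sets} outright from admissibility. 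Your alternative --- unfold $\scal{P_\Gamma f_{\mu,m}}{g}_{\Gamma\backslash\calH_n}$ and show its vanishing for all $\mu$ kills $g$ --- is viable and is essentially the computation the paper performs later, in Prop.\ \ref{prop:065}, for the single case $\mu=1$; it buys a more self-contained argument at the price of the Fubini and identification work you flag. Two points need care. First, $g\notin H_m$, so $\scal{v_\mu}{\phi_g}_{\pi_m}$ is not literally an $H_m$-inner product; you must either pass to the disk and read off the Taylor coefficients of $g\big|_m\ell^{-1}$ from the unfolded integral, or transport $F_g$ through the unitary equivalence of Lem.\ \ref{lem:017} before pairing. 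Second, the orthogonal-complement argument only shows the span is dense, so you should record that $S_m(\Gamma)$ is finite dimensional and that $\mu\mapsto P_\Gamma f_{\mu,m}$ is linear, whence $\left\{P_\Gamma f_{\mu,m}\right\}$ is already a subspace and density upgrades to the claimed equality of sets.
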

	
Our proof of Thm.\ \ref{thm:069} uses the classical lift $ \Phi_m $ that takes every function $ f:\calH_n\to\C $ to a function $ F_f:\Sp_{2n}(\R)\to\C $,
\[ F_f(g)=\left(f\big|_mg\right)(iI_n). \]
The lift $ \Phi_m $ is known to map Siegel cusp forms in $ S_m(\Gamma) $ to cuspidal automorphic forms on $ \Gamma\backslash\Sp_{2n}(\R) $. Moreover, the space $ \Phi_m(S_m(\Gamma)) $ can be described explicitly in terms of the spectral decomposition of $ L^2(\Gamma\backslash\Sp_{2n}(\R)) $ (see Prop.\ \ref{prop:034} and \ref{prop:081}). By this description and a result of D.\ Mili\v ci\'c (see Lem.\ \ref{lem:023}), the space $ \Phi_m(S_m(\Gamma)) $ is spanned by the Poincar\'e series $ P_\Gamma\varphi=\sum_{\gamma\in\Gamma}\varphi(\gamma\spacedcdot) $ of certain $ K $-finite matrix coefficients $ \varphi $ of an antiholomorphic discrete series representation $ \pi_m^* $ of $ \Sp_{2n}(\R) $ (see Prop.\ \ref{prop:029}). In Prop.\ \ref{prop:057}\ref{prop:057:2}, we prove that these matrix coefficients are exactly the functions $ F_{f_{\mu,m}} $ using some classical Harish-Chandra's results and the well-known fact that the functions $ f_{\mu,m} $ constitute the subspace $ (H_m)_K $ of $ K $-finite vectors in a holomorphic discrete series representation $ \left(\pi_m,H_m\right) $ of $ \Sp_{2n}(\R) $.

In Sect.\ \ref{sec:081}, we study the non-vanishing of Poincar\'e series $ P_\Gamma F_{f_{\mu,m}} $ and (equivalently) $ P_\Gamma f_{\mu,m} $ using Mui\'c's integral non-vanishing criterion given by Lem.\ \ref{lem:042}. The main result of this section is Prop.\ \ref{prop:043}. Here, we only highlight its following corollary.

\begin{namedcorollary}[\ref{cor:081}]
	Let $ m\in\Z_{>2n} $ and $ \mu\in\C[X_{r,s}:1\leq r,s\leq n]\setminus\left\{0\right\} $. Then,
	\[ P_{\Gamma_n(N)}F_{f_{\mu,m}}\not\equiv0\quad\text{and}\quad P_{\Gamma_n(N)}f_{\mu,m}\not\equiv0\quad\text{for}\quad N\gg0. \] 
\end{namedcorollary}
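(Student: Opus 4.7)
The plan is to derive Corollary~\ref{cor:081} by specializing Proposition~\ref{prop:043} to the principal congruence subgroups $\Gamma_n(N)$ and letting $N$ grow large, which forces $\Gamma_n(N)$ to collapse to $\{\pm I_{2n}\}$ on every fixed compact subset of $\Sp_{2n}(\R)$. By Proposition~\ref{prop:057}\ref{prop:057:2}, for any nonzero polynomial $\mu$ the lift $F_{f_{\mu,m}}$ is a nonzero $K$-finite matrix coefficient of the antiholomorphic discrete series $\pi_m^*$; since $m>2n$, this representation is integrable, so $F_{f_{\mu,m}}\in L^1(\Sp_{2n}(\R))$. This places us in the setting of Mui\'c's integral non-vanishing criterion (Lemma~\ref{lem:042}) and makes Proposition~\ref{prop:043} directly applicable.

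The key geometric input I would invoke is the standard fact that for every compact subset $\Omega\subseteq\Sp_{2n}(\R)$ there exists $N_0=N_0(\Omega)$ with $\Gamma_n(N)\cap\Omega\subseteq\{\pm I_{2n}\}$ for all $N\geq N_0$ (discreteness of $\Sp_{2n}(\Z)$ combined with the elementary bound that a matrix entry congruent to $0$ or $\pm 1\pmod N$ and bounded by a constant depending only on $\Omega$ must actually equal $0$ or $\pm 1$ once $N$ is large). To apply Proposition~\ref{prop:043}, I would pick a point $g_0\in\Sp_{2n}(\R)$ at which $F_{f_{\mu,m}}(g_0)\neq0$ and a compact neighborhood $C$ of the identity on which $\bigl|F_{f_{\mu,m}}(g_0 g)\bigr|\geq\delta>0$. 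The diagonal contribution $\int_C\bigl|F_{f_{\mu,m}}(g_0 g)\bigr|\,dg\geq\delta\cdot\vol(C)$ is then bounded below independently of $N$, while the off-diagonal contribution $\sum_{\gamma\in\Gamma_n(N)\setminus\{\pm I_{2n}\}}\int_C\bigl|F_{f_{\mu,m}}(g_0\gamma g)\bigr|\,dg$ is controlled by the $L^1$-tail of $F_{f_{\mu,m}}$ on the complement of an expanding compact set and hence tends to $0$ as $N\to\infty$. For $N\gg0$ the diagonal dominates, and Proposition~\ref{prop:043} yields $P_{\Gamma_n(N)}F_{f_{\mu,m}}\not\equiv0$. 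The equivalent assertion $P_{\Gamma_n(N)}f_{\mu,m}\not\equiv0$ then follows from the intertwining identity $\Phi_m(P_{\Gamma_n(N)}f_{\mu,m})=P_{\Gamma_n(N)}F_{f_{\mu,m}}$ together with the injectivity of $\Phi_m$ on holomorphic functions.

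The main obstacle is controlling the off-diagonal sum uniformly as $N\to\infty$: one must show not only that each individual translate $g_0\gamma C$ escapes every prescribed compact set for $\gamma\in\Gamma_n(N)\setminus\{\pm I_{2n}\}$ and $N$ large, but also that the aggregate $|F_{f_{\mu,m}}|$-mass on the union of these translates can be made arbitrarily small. This reduces to the absolute convergence of the Poincar\'e series of $|F_{f_{\mu,m}}|$ over $\Sp_{2n}(\Z)$, already implicit in the absolute and uniform convergence established in Theorem~\ref{thm:069}, combined with the discreteness estimate above; so the entire argument is, in the end, a fairly routine specialization of Proposition~\ref{prop:043} to the tower $\{\Gamma_n(N)\}_{N\geq1}$.
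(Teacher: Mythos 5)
Your argument, once disentangled, is correct in substance but it is not an application of Prop.~\ref{prop:043}, despite being framed as one. The paper's proof of Cor.~\ref{cor:081} is a genuine one-line specialization: Prop.~\ref{prop:043} already asserts non-vanishing for every $N\geq N_0(\mu,m)$ with $\Gamma\cap K=\{1\}$, so one only needs (a) $\Gamma_n(N)\cap K=\{1\}$ for $N\geq3$, and (b) $N_0(\mu,m)<\infty$, which follows because $M(N)$ is non-decreasing with $\lim_{N\to\infty}M(N)=1$, so $\int_{A^+_{M(N)}}\int_{\U(n)}\varphi_{\mu,m}\,du\,dx$ increases to the full integral $\int_{A_1^+}\int_{\U(n)}\varphi_{\mu,m}\,du\,dx>0$ and eventually exceeds half of it. Your mechanism --- fix $g_0$ with $F_{f_{\mu,m}}(g_0)\neq0$, bound the identity term below by $\delta\vol(C)$, and show the remaining translates contribute vanishingly little because $\Gamma_n(N)\setminus\{1\}$ escapes to infinity uniformly (Lem.~\ref{lem:045}) while the full-level series $\sum_{\gamma\in\Sp_{2n}(\Z)}\abs{F_{f_{\mu,m}}(\gamma\spacedcdot)}$ converges locally uniformly --- is a different, ``leading term dominates'' argument that bypasses Lem.~\ref{lem:042} and Prop.~\ref{prop:043} entirely; it is valid (modulo the slip that the translates should be $F_{f_{\mu,m}}(\gamma g_0 g)$, not $F_{f_{\mu,m}}(g_0\gamma g)$, and that for $N\geq3$ one excludes only $I_{2n}$ since $-I_{2n}\notin\Gamma_n(N)$). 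The trade-off: the paper's route yields an effective, computable threshold $N_0(\mu,m)$ (cf.\ Table~\ref{table:101}), whereas your tail estimate gives only an ineffective ``$N\gg0$''; on the other hand, your route avoids having to verify the positivity of $\int_{A_1^+}\int_{\U(n)}\varphi_{\mu,m}$ for $\mu\neq0$ (which the paper leaves implicit and which requires knowing that the matrices $u\,d_x^{1/2}u^\top$ fill out a set with nonempty interior in $\calD_n$), needing only $F_{f_{\mu,m}}\not\equiv0$. The final reduction from $P_{\Gamma_n(N)}F_{f_{\mu,m}}$ to $P_{\Gamma_n(N)}f_{\mu,m}$ via Lem.~\ref{lem:035} is exactly as in the paper.
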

 
\noindent The following proposition is an example of a more explicit non-vanishing result.

\begin{namedproposition}[\ref{prop:082}]
	Let $ m\in\Z_{>2n} $ and $ l\in\Z_{\geq0} $. Then:
	\begin{enumerate}[label=\textup{(\roman*)}]
		\item\label{prop:054:1a} $ P_{\Gamma_n(N)}F_{f_{\det^l,m}}\equiv0 $ and $ P_{\Gamma_n(N)}f_{\det^l,m}\equiv0 $ if one of the following holds:
		\begin{enumerate}[label=\textup{(V\arabic*)}]
			\item\label{enum:050:1a} $ N=1 $ and $ 4\nmid m+2l $
			\item\label{enum:050:2a} $ N=2 $ and $ 2\nmid m $.
		\end{enumerate}
		\item\label{prop:054:2a} Let us denote
		\[ A_t^+=\left\{x\in\R^n:t>x_1>\ldots>x_n>0\right\},\qquad t\in\R_{>0}. \]
		We define the functions $ \phi_{l,m}:A_1^+\to\R_{\geq0} $,
		\[ \phi_{l,m}(x)=\left(\prod_{r=1}^nx_r^{\frac l2}(1-x_r)^{\frac m2-n-1}\right)\prod_{1\leq r<s\leq n}(x_r-x_s), \] 
		and $ M:\Z_{>0}\to[0,1] $,
		\[ M(N)=\left(\sqrt{1+\frac{4n}{N^2}}+\sqrt{\frac{4n}{N^2}}\right)^{-2}, \]
		and let $ N_0(\det^l,m) $ be the smallest integer $ N\in\Z_{>0} $ such that
		\[ \int_{A_{M(N)}^+}\phi_{l,m}(x)\,dx>\frac12\int_{A_1^+}\phi_{l,m}(x)\,dx. \]
		Let $ N\in\Z_{>0} $ such that neither \ref{enum:050:1a} nor \ref{enum:050:2a} holds. If $ N\geq N_0(\det^l,m) $, then $ P_{\Gamma_n(N)}F_{f_{\det^l,m}}\not\equiv0 $ and $ P_{\Gamma_n(N)}f_{\det^l,m}\not\equiv0 $.
	\end{enumerate}
\end{namedproposition}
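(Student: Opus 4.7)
The plan is to treat parts (i) and (ii) separately. Part (i) will follow from a slash-action transformation identity for $f_{\det^l,m}$ under torsion elements of $K \cap \Gamma_n(N)$, where $K$ denotes the stabilizer of $iI_n$ in $\Sp_{2n}(\R)$ (so $K \cong \U(n)$ via $u = A + iB \leftrightarrow \iota(u) := \begin{pmatrix} A & B \\ -B & A \end{pmatrix}$). Part (ii) is an application of Mui\'c's integral non-vanishing criterion (Lem.\ \ref{lem:042}) after an explicit computation of $|F_{f_{\det^l,m}}|$ in Cartan coordinates.

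For part (i), a direct computation --- using that the Cayley image $w = (z-iI_n)(z+iI_n)^{-1}$ transforms as $w \mapsto u w u^\top$ under the action of $\iota(u) \in K$ on $\calH_n$, combined with the automorphy factor $\det(\bar u)^{-m}$ at $iI_n$ --- yields
\[ f_{\det^l, m}|_m \iota(u) = (\det u)^{m+2l}\, f_{\det^l, m}. \]
Consequently $P_{\Gamma_n(N)} f_{\det^l, m} \equiv 0$ as soon as the character $u \mapsto (\det u)^{m+2l}$ is nontrivial on $D(N) := \{u \in \U(n) : \iota(u) \in \Gamma_n(N)\}$. For $N = 1$, one checks that $D(1) = \U(n) \cap \M_n(\Z[i])$ consists of signed permutation matrices with entries in $\mu_4 = \{\pm 1, \pm i\}$ (e.g.\ $\diag(i, 1, \ldots, 1) \in D(1)$), so $\det(D(1)) = \mu_4$, yielding vanishing iff $4 \nmid m + 2l$. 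For $N = 2$, the further congruence $u \equiv I \pmod 2$ restricts $D(2)$ to $\diag(\pm 1, \ldots, \pm 1)$, whose determinants exhaust $\{\pm 1\}$, yielding vanishing iff $m$ is odd.

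For part (ii), parametrize $\Sp_{2n}(\R) = K A^+ K$ with $a(t) = \diag(e^{t_1}, \ldots, e^{t_n}, e^{-t_1}, \ldots, e^{-t_n})$ and $t_1 \geq \cdots \geq t_n \geq 0$. Since $f_{\det^l, m}$ is a $K$-eigenvector by part (i), $|F_{f_{\det^l, m}}|$ is bi-$K$-invariant, and a direct evaluation of $F_{f_{\det^l, m}}(a(t)) = (f_{\det^l, m}|_m a(t))(iI_n)$ gives
\[ |F_{f_{\det^l, m}}(a(t))| = \prod_{r=1}^{n} \frac{\sinh^l t_r}{\cosh^{m+l} t_r}. \]
Combining this with the Cartan Jacobian $\prod_r \sinh(2 t_r) \prod_{r<s} \sinh(t_r - t_s)\sinh(t_r + t_s)$ for $\Sp_{2n}(\R)$ and substituting $x_r = \tanh^2 t_r$ (which maps $\{t_1 > \cdots > t_n > 0\}$ bijectively onto $A_1^+$) converts, up to a universal constant, the bi-$K$-invariant integral $\int_{\Sp_{2n}(\R)} |F_{f_{\det^l,m}}|\, dg$ into $\int_{A_1^+} \phi_{l,m}(x)\, dx$. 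The bound $M(N)$ then arises by requiring the compact set $C_N := K \cdot \{a(t) : \tanh^2 t_1 \leq M(N)\} \cdot K$ to satisfy $C_N C_N^{-1} \cap \Gamma_n(N) \subseteq \{\pm I_{2n}\}$; the extremal elements of $\Gamma_n(N) \setminus \{\pm I_{2n}\}$ are expected to be unipotent shifts such as $\begin{pmatrix} I_n & N X \\ 0 & I_n \end{pmatrix}$ with nonzero symmetric $X \in \M_n(\Z)$, for which one computes $\sinh t_1(\gamma) = \tfrac{N}{2}\|X\|_{\mathrm{op}}$. Mui\'c's criterion then delivers non-vanishing of $P_{\Gamma_n(N)}F_{f_{\det^l,m}}$ (and hence of $P_{\Gamma_n(N)} f_{\det^l,m}$) whenever $\int_{A_{M(N)}^+} \phi_{l,m}(x)\, dx > \tfrac{1}{2} \int_{A_1^+} \phi_{l,m}(x)\, dx$.

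The main technical obstacle is the precise identification of $M(N)$: one must verify carefully that every $\gamma \in \Gamma_n(N) \setminus \{\pm I_{2n}\}$ has dominant Cartan coordinate $t_1(\gamma) > 2\artanh(\sqrt{M(N)})$, the maximal Cartan radius attained in $C_N C_N^{-1}$. This is a lattice-minimum estimate for integer symplectic matrices congruent to $I$ modulo $N$, and the specific form $M(N) = (\sqrt{1 + 4n/N^2} + \sqrt{4n/N^2})^{-2}$ suggests that the analysis proceeds via a uniform estimate over all candidate unipotents, with the factor $\sqrt n$ accounting for the dimension of symmetric $n \times n$ integer matrices. Once $M(N)$ is pinned down, the remaining ingredients --- the $K$-equivariance of $f_{\det^l,m}$, the Cartan-coordinate formula for $|F_{f_{\det^l,m}}|$, and the change of variables producing $\phi_{l,m}$ --- are routine.
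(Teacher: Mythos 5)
Your part (i) is essentially the paper's argument: $F_{f_{\det^l,m}}$ transforms on the left under $k_u$ by $\chi_{m+2l}(k_u)=\det u^{m+2l}$, the groups $\Gamma_n(N)\cap K$ for $N=1,2$ are exactly the signed-permutation and diagonal-sign groups you describe, and averaging a nontrivial character over this finite subgroup kills the series. (Read your ``iff'' as a statement about nontriviality of the character; the proposition itself only claims the ``if'' direction for vanishing.) Your part (ii) also follows the paper's strategy --- Mui\'c's criterion applied to a bi-$K$-invariant set $C_R=K\{h_t:t\in[0,R[^n\}K$, the radial formula $\abs{F_{\det^l,m}(h_t)}=\prod_r\sinh^l t_r/\cosh^{m+l}t_r$, the Cartan integration formula, and the substitution $x_r=\tanh^2 t_r$ --- and those computations are correct.

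The genuine gap is exactly the step you defer as ``the main technical obstacle'': the separation condition $CC^{-1}\cap\Gamma_n(N)\subseteq\Lambda$ is never verified, and the route you sketch (estimating the dominant Cartan coordinate $t_1(\gamma)$ and guessing that unipotent shifts $\begin{pmatrix}I_n&NX\\0&I_n\end{pmatrix}$ are extremal) is both incomplete --- a general element of $\Gamma_n(N)\setminus K$ is not unipotent, and the extremality claim is essentially equivalent to what must be proved --- and not how $M(N)$ actually arises. The paper's derivation is elementary: by Lem.\ \ref{lem:044}, every $g=kh_tk_uh_{-t'}k'\in C_RC_R^{-1}$ satisfies $\norm{g}^2<2\cosh(4R)\norm{u}^2=2n\cosh(4R)$ in the Frobenius norm, while by Lem.\ \ref{lem:045} every $\gamma\in\Gamma_n(N)\setminus K$ has at least $2n+1$ nonzero integer entries, one of them off-diagonal and hence of absolute value at least $N$, so $\norm{\gamma}^2\geq N^2+2n$; equating $2n\cosh(4R)=N^2+2n$ is precisely what produces $\tanh^2R\leq M(N)$. (The $n$ in $M(N)$ comes from $\norm{u}^2=n$ for $u\in\U(n)$, not from the dimension of the space of symmetric integer matrices.) A second substantive error: you impose $C_NC_N^{-1}\cap\Gamma_n(N)\subseteq\{\pm I_{2n}\}$, which is impossible whenever $\Gamma_n(N)\cap K\supsetneq\{\pm I_{2n}\}$, since $K\subseteq C_NC_N^{-1}$; this happens for $N=1$ (any $n$) and for $N=2$ with $n\geq2$, and these cases are not vacuous because $N_0(\det^l,m)$ drops to $1$ for large $m$. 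One must instead take $\Lambda=\Gamma_n(N)\cap K$ in Mui\'c's criterion and use that, when neither (V1) nor (V2) holds, $\chi_{m+2l}$ is trivial on $\Lambda$, so $F_{\det^l,m}$ is left $\Lambda$-invariant and $P_{\Gamma_n(N)}F_{\det^l,m}=\abs{\Lambda}\,P_{\Lambda\backslash\Gamma_n(N)}F_{\det^l,m}$, to which the criterion applies.
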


In Sect.\ \ref{sec:082}, using the representation theory of $ \Sp_{2n}(\R) $, we compute the Petersson inner products of the Poincar\'e series $ P_{\Gamma}F_{f_{1,m}} $ with cuspidal automorphic forms in $ \Phi_m(S_m(\Gamma)) $, thus extending Mui\'c's result \cite[Thm.\ 2-11]{muic12}, in which only the case when $ n=1 $ was considered. By applying the inverse of the lift $ \Phi_m $, we obtain 

\begin{namedcorollary}[\ref{cor:079}]
	Let $ \Gamma $ be a congruence subgroup of $ \Sp_{2n}(\Z) $, and let $ m\in\Z_{>2n} $. Then,
	\[ \scal f{P_\Gamma f_{1,m}}_{\Gamma\backslash\calH_n}=C_{m,n}\,f(iI_n),\qquad f\in S_m(\Gamma), \]
	where
	\[ C_{m,n}=2^{\frac{n(n+3)}2}\pi^{\frac{n(n+1)}2}\prod_{r=1}^n\frac{\Gamma\left(m-\frac{n+r}2\right)}{\Gamma\left(m-\frac{r-1}2\right)}. \]
\end{namedcorollary}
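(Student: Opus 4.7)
The plan is to transfer the identity to the group $\Sp_{2n}(\R)$ via the classical lift $\Phi_m$, compute the resulting inner product using Schur orthogonality for discrete series, and then descend back to $\calH_n$ through $\Phi_m^{-1}$. For every $f\in S_m(\Gamma)$ and every $g\in\Sp_{2n}(\R)$ one has $F_{f\vert_m g}=F_f(\,\cdot\, g)$, so $\Phi_m$ sends $P_\Gamma f_{1,m}$ to the group-level Poincar\'e series $P_\Gamma F_{f_{1,m}}$. Since $\Phi_m$ intertwines the Petersson inner product on $S_m(\Gamma)$ with the $L^2$-inner product on $\Gamma\backslash\Sp_{2n}(\R)$ up to an explicit constant depending only on the normalization of Haar measures and on $\vol(K)$ with $K=\U(n)$ the stabilizer of $iI_n$, the problem reduces to evaluating $\scal{F_f}{P_\Gamma F_{f_{1,m}}}_{\Gamma\backslash\Sp_{2n}(\R)}$.

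Next, I would unfold this Petersson inner product. The convergence needed to justify the unfolding is already furnished by the cuspidality estimates used in the proof of Thm.\ \ref{thm:069}, and the inner product rewrites as
\[ \int_{\Sp_{2n}(\R)} F_f(g)\,\overline{F_{f_{1,m}}(g)}\,dg. \]
By Prop.\ \ref{prop:057}\ref{prop:057:2}, $F_{f_{1,m}}$ is a $K$-finite matrix coefficient of the antiholomorphic discrete series $\pi_m^*$ associated with its lowest (one-dimensional) $K$-type, while $F_f$ lies in the $\pi_m^*$-isotypic component of $L^2(\Gamma\backslash\Sp_{2n}(\R))$. Applying Schur orthogonality for discrete series in the form of the Mili\v ci\'c result recorded as Lem.\ \ref{lem:023}, the integral collapses to $d(\pi_m^*)^{-1}$ times the pairing of $F_f$ against the reproducing vector, and that pairing in turn equals $F_f(I_{2n})=(f\vert_m I_{2n})(iI_n)=f(iI_n)$ up to the norm of the chosen lowest $K$-type vector.

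The main obstacle is the explicit bookkeeping of the constants produced by the unfolding and by Schur orthogonality, since all the ingredients of $C_{m,n}$ appear there: the formal degree $d(\pi_m^*)$ of the scalar-weight holomorphic discrete series of $\Sp_{2n}(\R)$, the norm of the lowest $K$-type vector inside $\pi_m$, the volume constant produced by $\Phi_m$, and the Cayley-transform prefactor $(2i)^{mn}$ built into the definition of $f_{1,m}$. The formal degree is classical and provides the gamma-factor ratio $\prod_{r=1}^n\Gamma(m-(n+r)/2)/\Gamma(m-(r-1)/2)$, while the remaining factor $2^{n(n+3)/2}\pi^{n(n+1)/2}$ should emerge from the Haar-measure normalization (most conveniently via an Iwasawa or $KAK$ decomposition of $\Sp_{2n}(\R)$) combined with the $(2i)^{mn}$. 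Aligning these normalizations with the conventions fixed earlier in the paper is delicate and the most likely place to pick up a spurious sign or power-of-$2$ error.

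Finally, since $\Phi_m$ is injective on $S_m(\Gamma)$, the group-level identity descends to $\calH_n$: after dividing by the intertwining constant of $\Phi_m$, the result is exactly the stated identity $\scal f{P_\Gamma f_{1,m}}_{\Gamma\backslash\calH_n}=C_{m,n}\,f(iI_n)$ for all $f\in S_m(\Gamma)$.
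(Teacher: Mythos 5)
Your overall architecture --- lift via $\Phi_m$, unfold to $\int_{\Sp_{2n}(\R)}F_f\,\overline{F_{1,m}}\,dg$, and extract $f(iI_n)$ times a constant --- is the same as the paper's, which proves the group-level statement as Prop.\ \ref{prop:065} and then descends via Prop.\ \ref{prop:034} and Lem.\ \ref{lem:035}. But your central step has a genuine gap. You cannot apply Schur orthogonality for discrete series directly to the integral $\int_{\Sp_{2n}(\R)}F_f(g)\,\overline{F_{1,m}(g)}\,dg$: the function $F_f$ is left $\Gamma$-invariant, hence not in $L^2(\Sp_{2n}(\R))$, so it is not a matrix coefficient of $\pi_m^*$ on the group and the orthogonality relations do not apply to this pairing as written. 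Moreover, Lem.\ \ref{lem:023} is not Schur orthogonality --- it is Mili\v ci\'c's density statement for Poincar\'e series of matrix coefficients and produces no scalar. What is actually needed is to recognize the unfolded integral as $\bigl(R\bigl(\overline{F_{1,m}}\bigr)F_f\bigr)(1)$, to note that $R\bigl(\overline{F_{1,m}}\bigr)$ preserves the $\chi_m$-isotypic line of the closed $\pi_m^*$-isotypic subspace generated by $F_f$ (Lem.\ \ref{lem:017} supplies the one-dimensionality), hence acts there by a scalar $\lambda$, and to compute $\lambda$ by transporting the identity $R\bigl(\overline{F_{1,m}}\bigr)\varphi=\lambda\varphi$ through a unitary equivalence onto the matrix-coefficient model $E_m\subseteq L^2(\Sp_{2n}(\R))$, where evaluation at the identity gives $\lambda=\norm{F_{1,m}}^2_{L^2(\Sp_{2n}(\R))}$.

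The second gap is that you leave the constant undetermined, proposing to assemble it from the formal degree of $\pi_m^*$, the norm of the lowest $K$-type vector, and Haar-measure normalizations, and you flag this bookkeeping as the likely source of error. None of it is necessary: with the paper's normalizations $\Lambda_m$ is an isometry (no extra volume constant), and the scalar above is exactly $\norm{F_{1,m}}^2_{L^2(\Sp_{2n}(\R))}=\norm{f_{1,m}}^2_{H_m}=\norm{p_1}^2_{D_m}$, which is evaluated by Hua's integral over the bounded domain $\calD_n$ in \eqref{eq:085} and is by definition the $C_{m,n}$ of the statement. The Gamma-ratio and the powers of $2$ and $\pi$ thus all come from a single explicit integral, not from a separate formal-degree computation; until you carry out some such computation, your argument does not actually establish the stated value of $C_{m,n}$.
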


As an application of Cor.\ \ref{cor:079}, in Sect.\ \ref{sec:083} we easily recover the well-known formula for the Siegel cusp forms that, in a sense, comprise the reproducing kernel function for $ S_m(\Gamma) $ (see \cite[p.\ 30]{godement10} and \cite[the last formula]{godement6}; cf.\ \cite[Ch.\ 3, \S6, Thm.\ 1]{klingen}). In other words, we give a representation-theoretic proof of

\begin{namedtheorem}[\ref{thm:080}]
	Let $ m\in\Z_{>2n} $ and $ \xi=x+iy\in\calH_n $. We define the function $ f_{1,m,\xi}:\calH_n\to\C $,
	\[ f_{1,m,\xi}(z)=C_{m,n}^{-1}\,\det\left(\frac1{2i}\left(z-\overline\xi\right)\right)^{-m}. \]
	Let $ \Gamma $ be a congruence subgroup of $ \Sp_{2n}(\Z) $. Then, the Poincar\'e series
	\[ \Delta_{\Gamma,m,\xi}=P_\Gamma f_{1,m,\xi}=\sum_{\gamma\in\Gamma} f_{1,m,\xi}\big|_m\gamma \]
	converges absolutely and uniformly on compact subsets of $ \calH_n $ and defines a Siegel cusp form $ \Delta_{\Gamma,m,\xi}\in S_m(\Gamma) $ that satisfies
	\[ \scal f{\Delta_{\Gamma,m,\xi}}_{\Gamma\backslash\calH_n}=f(\xi),\qquad f\in S_m(\Gamma).  \]
\end{namedtheorem}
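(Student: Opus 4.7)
The plan is to reduce Theorem~\ref{thm:080} to Corollary~\ref{cor:079} by exploiting the transitive action of $\Sp_{2n}(\R)$ on $\calH_n$. Writing $\xi = x + iy$, pick $g_\xi = \left(\begin{smallmatrix} y^{1/2} & xy^{-1/2} \\ 0 & y^{-1/2}\end{smallmatrix}\right) \in \Sp_{2n}(\R)$, so that $g_\xi\cdot iI_n = \xi$. From $g_\xi\cdot w = y^{1/2}wy^{1/2}+x$ one computes $\det(g_\xi\cdot w - \overline\xi) = \det(y)\det(w + iI_n)$, which together with the definitions of $f_{1,m,\xi}$ and of the slash operator yields the key relation
\[
f_{1,m,\xi} = C_{m,n}^{-1}\,(\det y)^{-m/2}\,\bigl(f_{1,m}\big|_m g_\xi^{-1}\bigr).
\]
Since the automorphy factor of $g_\xi^{-1}$ is the constant $(\det y)^{1/2}$, the absolute uniform convergence of $\sum_{\gamma\in\Gamma}|f_{1,m}\big|_m g_\xi^{-1}\gamma(z)|$ on compact sets reduces to that of $\sum_{\gamma\in\Gamma}|f_{1,m}\big|_m\gamma(z)|$, which is the $\mu=1$ case of Theorem~\ref{thm:069}. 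Hence $\Delta_{\Gamma,m,\xi}$ converges absolutely and uniformly on compact subsets of $\calH_n$, and standard arguments place it in $S_m(\Gamma)$.

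For the reproducing identity, the standard unfolding trick (justified by the absolute convergence and by the rapid decay of $f$) gives
\[
\scal{f}{P_\Gamma f_{1,m,\xi}}_{\Gamma\backslash\calH_n} = \int_{\calH_n} f(z)\,\overline{f_{1,m,\xi}(z)}\,\det(\mathrm{Im}(z))^m\,d\mathsf v(z).
\]
Both sides of the identity to be proved depend holomorphically on $\xi$: the right-hand side $f(\xi)$ manifestly, and the left-hand side because $\overline{f_{1,m,\xi}(z)}$ is holomorphic in $\xi$ and the integral converges locally uniformly in $\xi$. It therefore suffices to prove the identity on a subset of $\calH_n$ with an accumulation point; I will use the orbit $\Sp_{2n}(\Q)\cdot iI_n$, which is dense in $\calH_n$ by density of $\Sp_{2n}(\Q)$ in $\Sp_{2n}(\R)$.

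Suppose now that $g_\xi \in \Sp_{2n}(\Q)$. Then $\Gamma'' = \Gamma \cap g_\xi^{-1}\Gamma g_\xi$ is a congruence subgroup of $\Sp_{2n}(\Z)$, and $f\big|_m g_\xi \in S_m(\Gamma'')$. Corollary~\ref{cor:079} applied to $f\big|_m g_\xi$ with $\Gamma''$ in place of $\Gamma$, followed by the same unfolding as above, gives
\[
\int_{\calH_n}\bigl(f\big|_m g_\xi\bigr)(w)\,\overline{f_{1,m}(w)}\,\det(\mathrm{Im}(w))^m\,d\mathsf v(w) = C_{m,n}\,\bigl(f\big|_m g_\xi\bigr)(iI_n) = C_{m,n}\,(\det y)^{m/2}\,f(\xi),
\]
where the last equality uses $(f\big|_m g_\xi)(iI_n) = \det(y^{-1/2})^{-m}f(\xi)$.

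Finally, the change of variable $z = g_\xi\cdot w$ in the unfolded left-hand side --- using $\Sp_{2n}(\R)$-invariance of $d\mathsf v$, the transformation $\det(\mathrm{Im}(g_\xi\cdot w))^m = (\det y)^m\det(\mathrm{Im}(w))^m$, the formula $f(g_\xi\cdot w) = (\det y)^{-m/2}(f\big|_m g_\xi)(w)$, and the consequence $f_{1,m,\xi}(g_\xi\cdot w) = C_{m,n}^{-1}(\det y)^{-m}f_{1,m}(w)$ of the first-paragraph identity --- converts it into $(\det y)^{-m/2}C_{m,n}^{-1}$ times the integral just computed. All factors of $\det y$ cancel, yielding $\scal{f}{P_\Gamma f_{1,m,\xi}}_{\Gamma\backslash\calH_n} = f(\xi)$ for $\xi$ in the dense subset, and hence for all $\xi\in\calH_n$ by the holomorphy observation. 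The main delicate point is the disciplined tracking of the $\det(y)^{\pm m/2}$ factors through the change of variable; their exact cancellation encodes the compatibility between the reproducing constant $C_{m,n}$ and the slash action, while the density-plus-holomorphy step cleanly handles the passage from rational to general $\xi$ without having to extend Corollary~\ref{cor:079} to functions outside $\bigcup_\Gamma S_m(\Gamma)$.
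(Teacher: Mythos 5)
Your overall strategy coincides with the paper's: conjugate by an element carrying $iI_n$ to $\xi$, deduce convergence from the $\xi=iI_n$ case, prove the reproducing identity for a dense set of ``rational'' $\xi$ by applying Corollary~\ref{cor:079} to $f\big|_mg_\xi$ over a smaller congruence subgroup, and pass to general $\xi$ by density. Your key identity $f_{1,m,\xi}=C_{m,n}^{-1}(\det y)^{-m/2}\,f_{1,m}\big|_mg_\xi^{-1}$ is the paper's \eqref{eq:073}, and the $\det y$ bookkeeping in your final computation is correct. However, three of your justifications do not hold as written.

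(a) \emph{Convergence.} Constancy of the automorphy factor of $g_\xi^{-1}$ does not reduce $\sum_{\gamma\in\Gamma}\abs{f_{1,m}\big|_mg_\xi^{-1}\gamma}$ to $\sum_{\gamma\in\Gamma}\abs{f_{1,m}\big|_m\gamma}$: writing $g_\xi^{-1}\gamma=(g_\xi^{-1}\gamma g_\xi)g_\xi^{-1}$ shows that the series in question is, up to a constant and a change of variable, the Poincar\'e series of $f_{1,m}$ over the conjugated group $g_\xi^{-1}\Gamma g_\xi$, which is discrete but not a congruence subgroup of $\Sp_{2n}(\Z)$, so Theorem~\ref{thm:069} does not apply to it. The tool you actually need is the convergence of Poincar\'e series of $K$-finite matrix coefficients of integrable representations over \emph{arbitrary} discrete subgroups (the statement opening Section~\ref{sec:039}, combined with Proposition~\ref{prop:057}); this is exactly how the paper treats $P_{g\Gamma g^{-1}}F_{1,m}$. (b) A subset with an accumulation point is not a set of uniqueness for holomorphic functions of several variables ($\dim_\C\calH_n>1$ for $n\geq2$). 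You do in the end use a dense set, for which plain continuity of $\xi\mapsto\scal{f}{\Delta_{\Gamma,m,\xi}}_{\Gamma\backslash\calH_n}$ suffices; but that continuity requires a dominating estimate such as Klingen's $\abs{\det(z-\overline\xi)}^{-m}\leq\alpha_{n,d}\abs{\det(z+iI_n)}^{-m}$ for $\xi$ in a vertical strip, which you assert rather than supply. (c) There is a mismatch between your dense set and your group element: for $\xi\in\Sp_{2n}(\Q).(iI_n)$ the specific matrix $g_\xi=n_xa_y$ need not lie in $\Sp_{2n}(\Q)$, since rationality of $y$ does not give rationality of $y^{1/2}$. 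Either restrict to the (still dense) set of $\xi=x+iy$ with $x$ and $y^{1/2}$ rational, or rerun the rational case with an arbitrary $h\in\Sp_{2n}(\Q)$ satisfying $h.(iI_n)=\xi$, at the cost of a nonconstant automorphy factor. All three points are repairable, and once repaired your argument is essentially the paper's, with your explicit unfolding-plus-change-of-variables replacing the paper's quotient-comparison identities \eqref{eq:071} and \eqref{eq:072}.
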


\bigskip

\noindent\emph{Acknowledgements.} I would like to thank Goran Mui\'c for turning my attention to the problems considered in this paper.

\section{Basic notation}

Throughout the paper, we fix $ m,n\in\Z_{>0} $. We will write $ z^\top $ for the transpose, $ z^* $ for the conjugate transpose, and $ \tr(z) $ for the trace of a matrix $ z\in M_n(\C) $. For every set $ S\subseteq\mathbb C $, let $ \Diag_n(S) $ be the set of diagonal matrices in $ M_n(\mathbb C) $ whose diagonal entries belong to $ S $.

Let $ J_n=\begin{pmatrix}&I_n\\-I_n&\end{pmatrix}\in\GL_{2n}(\C) $. The group 
\[ \Sp_{2n}(\R)=\left\{g\in\GL_{2n}(\R):g^\top J_ng=J_n\right\} \] 
acts on the left on the Siegel upper half-space
\[ \calH_n=\left\{z=x+iy\in M_n(\C):z^\top=z,\ y>0\right\} \]
transitively by generalized linear fractional transformations:
\[ g.z=(Az+B)(Cz+D)^{-1},\qquad g=\begin{pmatrix}A&B\\C&D\end{pmatrix}\in\Sp_{2n}(\R),\ z\in\calH_n. \]
Moreover, $ \Sp_{2n}(\R) $ acts on the right on the set $ \mathbb C^{\mathcal H_n} $ of functions $ \mathcal H_n\to\C $ by
\begin{equation}\label{eq:074}
	\left(f\big|_mg\right)(z)=j(g,z)^{-m}f(g.z),\qquad f\in\mathbb C^{\mathcal H_n},\ g\in\Sp_{2n}(\R), z\in\calH_n,
\end{equation}
where
\[ j(g,z)=\det(Cz+D),\qquad g=\begin{pmatrix}A&B\\C&D\end{pmatrix}\in\Sp_{2n}(\R),\ z\in\calH_n. \]
Recall that $ j $ is an automorphic factor: for all $ g_1,g_2\in\Sp_{2n}(\R) $ and $ z\in\calH_n $, we have
\begin{equation}\label{eq:040}
	j(g_1g_2,z)=j(g_1,g_2.z)\,j(g_2,z).
\end{equation}

Every $ g\in \Sp_{2n}(\R) $ has a unique factorization
\begin{equation}\label{eq:007}
	g=\begin{pmatrix}I_n&x\\&I_n\end{pmatrix}\begin{pmatrix}y^{\frac12}\\&y^{-\frac12}\end{pmatrix}\begin{pmatrix}A&B\\-B&A\end{pmatrix},
\end{equation}
where $ x,y,A,B\in M_n(\R) $ are such that $ x+iy=g.(iI_n) $ and $ A+iB\in \mathrm U(n) $. Let us denote the three factors on the right-hand side of \eqref{eq:007}, from left to right, by $ n_x $, $ a_y $, and $ k_{A+iB} $.
The assignment $ u\mapsto k_u $ defines a Lie group isomorphism from $ \mathrm U(n) $ to the maximal compact subgroup $ K=\Sp_{2n}(\R)\cap \mathrm U(2n) $
of $ \Sp_{2n}(\R) $. For every $ r\in\mathbb Z $, we define a character $ \chi_r:K\to\mathbb C^\times $,
\[ \chi_r(k_u)=\det u^r,\qquad u\in \mathrm U(n). \]

Let $ \mathsf v $ be the $ \Sp_{2n}(\R) $-invariant Radon measure on $ \calH_n $ given by \[ d\mathsf v(z)=d\mathsf v(x+iy)=(\det y)^{-n-1}\,\prod_{1\leq r\leq s\leq n}dx_{r,s}\,dy_{r,s}. \]
The generalized linear fractional transformation corresponding to the matrix $ \ell=\frac1{2i}\begin{pmatrix}I_n&-iI_n\\I_n&iI_n\end{pmatrix} $ defines a biholomorphic function from $ \calH_n $ to the bounded domain
\[ \calD_n=\left\{w\in M_n(\C):w^\top=w,\ I_n-w^*w>0\right\}. \]
We have
\[ \int_{\calH_n}f(z)\,d\mathsf v(z)=\int_{\calD_n}f(\ell^{-1}.w)\,d\mathsf v_\calD(w),\qquad f\in C_c(\calH_n), \]
where $ \mathsf v_\calD $ is the Radon measure on $ \calD_n $ given by 
\begin{equation}\label{eq:009}
d\mathsf v_\calD(w)=d\mathsf v_\calD(u+iv)=2^{n(n+1)}\det(I_n-w^*w)^{-n-1}\,\prod_{1\leq r\leq s\leq n}du_{r,s}dv_{r,s},
\end{equation}
and $ C_c(\mathcal H_n) $ is the space of compactly supported continuous functions $ \mathcal H_n\to\mathbb C $. 

It follows from \cite[Thm.\ 8.36]{knapp1996} that a Haar measure on $ \Sp_{2n}(\R) $ is given by
\begin{equation}\label{eq:059}
	\int_{\Sp_{2n}(\R)}f(g)\,dg=\int_{\calH_n}\int_Kf(n_xa_yk)\,dk\,d\mathsf v(z),\qquad f\in C_c(\Sp_{2n}(\R)), 
\end{equation}
where $ dk $ denotes the Haar measure on $ K $ with respect to which $ \vol(K)=1 $. 
For every discrete subgroup $ \Gamma $ of $ \Sp_{2n}(\R) $, there is a unique Radon measure on $ \Gamma\backslash\Sp_{2n}(\R) $ such that
\begin{equation}\label{eq:088}
	\int_{\Gamma\backslash\Sp_{2n}(\R)}\sum_{\gamma\in\Gamma}f(\gamma g)\,dg=\int_{\Sp_{2n}(\R)}f(g)\,dg,\qquad f\in C_c(\Sp_{2n}(\R)),
\end{equation}
or equivalently
\begin{equation}\label{eq:016}
\int_{\Gamma\backslash\Sp_{2n}(\R)}f(g)\,dg=\varepsilon_\Gamma^{-1}\,\int_{\Gamma\backslash\calH_n}\int_Kf(n_xa_yk)\,dk\,d\mathsf v(z),\qquad f\in C_c(\Gamma\backslash\Sp_{2n}(\R)), 
\end{equation}
where $ \varepsilon_\Gamma=\abs{\Gamma\cap\left\{\pm I_{2n}\right\}} $.
Using this measure, we define in a standard way the Hilbert space $ L^2(\Gamma\backslash\Sp_{2n}(\R)) $ of (equivalence classes of) square-integrable functions $ \Gamma\backslash\Sp_{2n}(\R)\to\C $. 

By \cite[Ch.\ 1, Lem.\ 2.8]{andrianov_zhuravlev}, we have
\begin{equation}\label{eq:077}
	\Im(g.z)=(Cz+D)^{-*}y(Cz+D)^{-1},\quad g=\begin{pmatrix}A&B\\C&D\end{pmatrix}\in\Sp_{2n}(\R),\ z=x+iy\in\calH_n,
\end{equation}
which implies that for all $ f:\calH_n\to\C $ and $ g\in\Sp_{2n}(\R) $, we have
\begin{equation}\label{eq:065}
	\sup_{z\in\calH_n}\abs{\left(f\big|_mg\right)(z)}\,\det y^{\frac m2}=\sup_{z\in\calH_n}\abs{f(z)}\,\det y^{\frac m2}
\end{equation}
and if $ f $ is Borel measurable, then
\begin{equation}\label{eq:078}
	\int_{\calH_n}\abs{\left(f\big|_mg\right)(z)}\,\det y^{\frac m2}\,d\mathsf v(z)=\int_{\calH_n}\abs{f(z)}\,\det y^{\frac m2}\,d\mathsf v(z).
\end{equation}

Given a real Lie algebra $ \mathfrak g $, we will denote by $ \mathfrak g_\C $ its complexification $ \mathfrak g\otimes_\R\C $.
For $ r=1,\ldots,n $, let $ T_r=-i\begin{pmatrix}&E_{r,r}\\-E_{r,r}&\end{pmatrix}\in\mathfrak{gl}_{2n}(\C) $, where $ E_{r,r}\in M_n(\C) $ is the matrix unit with $ (r,r) $-th entry $ 1 $. The Lie algebra $ \mathfrak h=i\bigoplus_{r=1}^n\mathbb R T_r $ is a Cartan subalgebra both of $ \mathfrak g=\Lie(\Sp_{2n}(\R))\equiv\mathfrak{sp}_{2n}(\R) $ and of $ \mathfrak k=\Lie(K) $. Let $ e_r $ denote the linear functional on $ \mathfrak h_\C $ such that $ e_r(T_s)=\delta_{r,s} $ for all $ s $. We fix the following choice of the set $ \Delta_K^+ $ of compact positive roots and the set $ \Delta_n^+ $ of  non-compact positive roots in $ \Delta(\mathfrak h_\C:\mathfrak g_\C) $:
\[ \begin{aligned}
\Delta_K^+&=\left\{e_r-e_s:1\leq r<s\leq n\right\},\\
\Delta_n^+&=\left\{e_r+e_s:1\leq r\leq s\leq n\right\}.
\end{aligned} \]
Let us denote by $ \mathfrak g_\alpha $ the root subspace of $ \mathfrak g_\C $ corresponding to a root $ \alpha $. We define
\[ \mathfrak k_\C^+=\bigoplus_{\alpha\in\Delta_K^+}\mathfrak g_{\alpha},\quad\mathfrak k_\C^-=\bigoplus_{\alpha\in\Delta_K^+}\mathfrak g_{-\alpha},\quad\mathfrak p_\C^+=\bigoplus_{\alpha\in\Delta_n^+}\mathfrak g_{\alpha},\quad\text{and}\quad\mathfrak p_\C^{-}=\bigoplus_{\alpha\in\Delta_n^+}\mathfrak g_{-\alpha}. \]

Let $ \calU(\frakg) $ be the universal enveloping algebra of $ \mathfrak g_\C $, and let $ \calZ(\frakg) $ denote the center of $ \calU(\frakg) $.
Given a $ \mathfrak g $-module $ V $ and a subset $ U $ of $ \mathcal U(\mathfrak g) $, let $ V^U $ denote the subspace of vectors  $ v\in V $ such that $ u.v=0 $  for all $u\in U $.
A vector $ v\in V $ is said to be
 $ \mathcal Z(\mathfrak g) $-finite if $ \dim_{\C}\mathcal Z(\mathfrak g).v<\infty $. 

Given a unitary representation $ (\pi,H) $ of $ \Sp_{2n}(\R) $, let $ H_K $ denote the $ (\mathfrak g,K) $-module of smooth vectors $ v\in H $ that are $ K $-finite, i.e., such that $ \dim_{\C}\mathrm{span}_{\mathbb C}\pi(K)v<\infty $. Moreover, for every irreducible unitary representation $ \rho $ of $ \Sp_{2n}(\R) $, let us denote by $ [\rho] $ the unitary equivalence class of $ \rho $, and by $ H_{[\rho]} $ the sum of irreducible closed $ \Sp_{2n}(\R) $-invariant subspaces of $ H $ that are equivalent to $ \rho $.
Similarly, given a $ K $-module $ V $ and an irreducible representation $ \tau $ of $ K $, we denote by $ [\tau] $ the equivalence class of $ \tau $, and by $ V_{[\tau]} $ the sum of $ K $-invariant subspaces of $ V $ that are equivalent to $ \tau $.

\section{Preliminaries on certain highest weight representations}\label{sec:003}

Throguhout this section, suppose that $ m\in\mathbb Z_{>n} $. Let $ H_m $ be the Hilbert space of holomorphic functions $ f:\mathcal H_n\to\C $ such that
\[ \int_{\calH_n}\abs{f(z)}^2\det y^m\,d\mathsf v(z)<\infty, \]
with the inner product
\[ \scal{f_1}{f_2}_{H_m}=\int_{\calH_n}f_1(z)\,\overline{f_2(z)}\,\det y^m\,d\mathsf v(z). \]
The irreducible unitary representation $ (\pi_m,H_m) $ of $ \mathrm{Sp}_{2n}(\R) $ defined by
\[ \pi_m(g)f=f\big|_mg^{-1},\qquad g\in\mathrm{Sp}_{2n}(\R),\ f\in H_m, \]
belongs to the holomorphic discrete series of $ \Sp_{2n}(\R) $ (see, e.g., \cite[\S3]{gelbart}).

Another useful realization $ (\pi_m^\ell,D_{m}) $ of $ \pi_m $ can be defined by requiring that the assignment $ f\mapsto f\big|_m\ell^{-1} $ define a unitary $ \Sp_{2n}(\R) $-equivalence $ H_m\to D_{m} $. One checks easily that $ D_{m} $ is the Hilbert space of holomorphic functions $ f:\mathcal D_n\to\mathbb C $ such that 
\[ \int_{\mathcal D_n}\abs{f(w)}^2\det(I_n-w^*w)^m\,d\mathsf v_{\mathcal D}(w)<\infty,  \]
with the inner product
\begin{equation}\label{eq:055}
	 \scal{f_1}{f_2}_{D_m}=\int_{\calD_n}f_1(w)\,\overline{f_2(w)}\,\det(I_n-w^*w)^m\,d\mathsf v_{\mathcal D}(w),
\end{equation}
and that
\[ \pi_m^{\ell}(g)f=f\big|_m\ell g^{-1}\ell^{-1},\qquad g\in\Sp_{2n}(\R),\ f\in D_m. \]
For every $ d\in\mathbb Z_{\geq0} $, let $ D_{m,d} $ be the finite-dimensional $ K $-invariant subspace of $ D_m $ consisting of the functions $ p_\mu:\mathcal D_n\to\C $,
\begin{equation}\label{eq:086}
p_\mu(w)=\mu(w),
\end{equation}
where $ \mu $ is a homogeneous polynomial of degree $ d $ in $ \C[X_{r,s}:1\leq r,s\leq n] $.
The $ (\mathfrak g,K) $-module $ (D_m)_K $ of $ K $-finite vectors in $ D_m $ has a direct sum decomposition
\begin{equation}\label{eq:028}
	 (D_m)_K=\bigoplus_{d\in\mathbb Z_{\geq0}}D_{m,d}. 
\end{equation}
Writing $ D_{m,-1}=0 $, one shows easily that $ \pi_m^{\ell}(\mathfrak p_\C^\pm)D_{m,d}\subseteq D_{m,d\mp1} $ for every $ d $. In particular, the $ 1 $-dimensional space $ D_{m,0}\subseteq(D_m)_{[\chi_{-m}]} $ is annihilated by $ \mathfrak p_\C^+ $. In fact:

\begin{lemma}\label{lem:019}
	Let $ m\in\mathbb Z_{>n} $. Then:
	\begin{enumerate}[label=\textup{(\roman*)}]
		\item The representation $ (\pi_m,H_m) $ is, up to unitary equivalence, the unique irreducible unitary representation  of $ \Sp_{2n}(\R) $ containing a one-dimensional subspace that is $ K $-equivalent to $ \chi_{-m} $ and annihilated by $ \mathfrak p_\C^+ $. 
		\item\label{lem:019:2} The contragredient representation $ (\pi_m^*,H_m^*) $ is, up to unitary equivalence, the unique irreducible unitary representation of $ \Sp_{2n}(\R) $ containing a one-dimensional subspace that is $ K $-equivalent to $ \chi_m $ and annihilated by $ \mathfrak p_\C^- $. 
	\end{enumerate}
\end{lemma}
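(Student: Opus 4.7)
The plan is to treat (i) and (ii) in parallel. I will establish (i) and then deduce (ii) by the dual argument, with $\frakp_\C^-$ replacing $\frakp_\C^+$ and $\chi_m$ replacing $\chi_{-m}$, invoking that the contragredient $(\pi_m^*,H_m^*)$ carries, by duality with $(\pi_m,H_m)$, a $1$-dimensional $K$-subspace equivalent to $\chi_m$ and annihilated by $\frakp_\C^-$.

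Existence for (i) is essentially already stated: under the unitary equivalence $f\mapsto f\big|_m\ell^{-1}$ from $H_m$ to $D_m$, the preimage of $D_{m,0}$ is a $1$-dimensional $K$-invariant subspace of $H_m$, $K$-equivalent to $\chi_{-m}$ (by the inclusion $D_{m,0}\subseteq(D_m)_{[\chi_{-m}]}$ recorded just before the lemma) and annihilated by $\frakp_\C^+$ (by the recorded inclusion $\pi_m^\ell(\frakp_\C^+)D_{m,0}\subseteq D_{m,-1}=0$).

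For uniqueness, let $(\rho,V)$ be any irreducible unitary representation of $\Sp_{2n}(\R)$ satisfying the hypothesis of (i), and fix a nonzero vector $v$ in the distinguished $1$-dimensional subspace. Since $K$ acts on $v$ through the character $\chi_{-m}$, whose differential vanishes on every $\frakk_\C$-root vector, one has $(\frakk_\C^+\oplus\frakk_\C^-)v=0$, while $\frakh_\C$ acts on $v$ by the weight $\lambda=-m(e_1+\cdots+e_n)$. Combined with the hypothesis $\frakp_\C^+v=0$, this makes $v$ a highest weight vector of weight $\lambda$ for the positive system $\Delta_K^+\cup\Delta_n^+$. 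Irreducibility of $V$ forces $\calU(\frakg)v=V_K$, and the Poincar\'e--Birkhoff--Witt theorem together with the preceding annihilations reduces this to $V_K=\calU(\frakp_\C^-)v$; hence $V_K$ is a cyclic quotient of the generalized Verma module $\calU(\frakg)\otimes_{\calU(\frakk_\C\oplus\frakp_\C^+)}\C_\lambda$ and, by its irreducibility, coincides with the unique irreducible quotient $L(\lambda)$ of that module. The same reasoning applied to the distinguished vector in $H_m$ produced above yields $(H_m)_K\cong L(\lambda)$, and therefore $V_K\cong(H_m)_K$ as $(\frakg,K)$-modules.

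The main technical hurdle is the final step: deducing that $V\cong H_m$ as unitary representations from the $(\frakg,K)$-module isomorphism $V_K\cong (H_m)_K$. This is a standard consequence of Harish-Chandra's admissibility theorem and the uniqueness of unitary globalization of an irreducible admissible $(\frakg,K)$-module. A secondary check, that $\lambda$ is $K$-dominant, is trivial here because $\langle\lambda,e_i-e_j\rangle=0$ for every compact root. Part (ii) is handled by rerunning the uniqueness argument with $\frakp_\C^+$ replaced by $\frakp_\C^-$, landing at the irreducible module of highest weight $m(e_1+\cdots+e_n)$, which one identifies with $(H_m^*)_K$ via contragredient duality applied to the distinguished subspace of $H_m$.
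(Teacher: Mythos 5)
Your argument is correct and follows essentially the same route as the paper: the stated properties exhibit the distinguished vector as a highest weight vector for $\Delta_K^+\cup\Delta_n^+$, so Verma-module theory (the paper cites Humphreys, \S20.3, Thm.\ A) pins down the $(\frakg,K)$-module isomorphism class of $V_K$, and Harish-Chandra's theorems (Wallach, Thm.\ 3.4.11) upgrade infinitesimal equivalence to unitary equivalence. You merely spell out the generalized-Verma-module step that the paper compresses into citations.
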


\begin{proof}
	This result follows from the theory of Verma modules. More precisely, by \cite[first part of Thm.\ 3.4.11]{wallachI} and \cite[\S20.3, Thm.\ A]{humphreys}, the stated properties uniquely determine the $ \mathfrak g $-module isomorphism class or equivalently, by the connectedness of $ K $ and \cite[Cor.\ 3.49]{hall}, the $ \left(\mathfrak g,K\right) $-module isomorphism class of $ (H_m)_K $ (resp., $ (H_m^*)_K $). Thus, by Harish-Chandra's result \cite[second part of Thm.\ 3.4.11]{wallachI} they uniquely determine the unitary equivalence class of the representation $ \pi_m $ (resp., $ \pi_m^* $).  
\end{proof}

In terms of \cite[Thm.\ 9.20]{knapp1986}, $ \pi_m $ is a discrete series representation of $ \Sp_{2n}(\R) $ with Harish-Chandra parameter $ -\sum_{r=1}^n(m-r)e_r $, Blattner parameter $ -m\sum_{r=1}^ne_r $, and highest $ K $-type $ \chi_{-m} $. The contragredient representation $ \pi_m^* $ is an antiholomorphic discrete series representation of $ \Sp_{2n}(\R) $ with Harish-Chandra parameter $ \sum_{r=1}^n(m-r)e_r $, Blattner parameter $ m\sum_{r=1}^ne_r $, and lowest $ K $-type $ \chi_{m} $. In particular,
\begin{equation}\label{eq:084}
	\dim_\C(H_m)_{[\chi_{-m}]}=1\qquad\text{and}\qquad\dim_\C(H_m^*)_{[\chi_{m}]}=1.
\end{equation}

Recall that an irreducible unitary representation $ (\pi,H) $ of $ \Sp_{2n}(\R) $ is said to be integrable if for all  $ h,h'\in H_K $, the matrix coefficient $ c_{h,h'}:\Sp_{2n}(\R)\to\C $,
\[ c_{h,h'}(g)=\scal{\pi(g)h}{h'}_H, \] 
belongs to $ L^1(\Sp_{2n}(\R)) $. By \cite{hecht_schmid}, we have the following

\begin{lemma}\label{lem:058}
	Let $ m\in\Z_{>n} $. The representation $ \pi_m $ (resp., $ \pi_m^* $) is integrable if and only if $ m>2n $.
\end{lemma}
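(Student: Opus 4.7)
The plan is to reduce the question to the convergence of a concrete integral on $\calD_n$, which is directly controlled by the condition $m>2n$. Since the matrix coefficients of $\pi_m^*$ are complex conjugates of those of $\pi_m$, the two representations are integrable simultaneously, so I only need to handle $\pi_m$. By general facts about discrete series (e.g.\ \cite{knapp1986}), the integrability of $\pi_m$ is equivalent to the $L^1$-integrability of any single nonzero $K$-finite matrix coefficient of $\pi_m$. I would work with
\[ c(g):=\scal{\pi_m^\ell(g)\mathbf 1}{\mathbf 1}_{D_m}, \]
where $\mathbf 1\in D_{m,0}$ is the constant function spanning the lowest $K$-type of the bounded realization $(\pi_m^\ell,D_m)$.

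Since $\mathbf 1$ is, up to a positive scalar $c_m$, the reproducing kernel of $D_m$ at $0\in\calD_n$, one obtains
\[ c(g)=c_m^{-1}\,j(\ell g^{-1}\ell^{-1},0)^{-m}. \]
A direct matrix computation starting from the Cayley transform $\ell.z=(z-iI_n)(z+iI_n)^{-1}$ yields the identity $(z+iI_n)^*(I_n-w^*w)(z+iI_n)=4y$ with $w=\ell.z$, hence
\[ \det(I_n-w^*w)=\frac{4^n\det(y)}{|\det(z+iI_n)|^2}. \]
Combined with the decomposition \eqref{eq:007} of $g^{-1}$ and the cocycle relation \eqref{eq:040}, this produces the key identity
\[ |c(g)|=C\,\det(I_n-w_g^*w_g)^{m/2},\qquad w_g:=\ell g^{-1}\ell^{-1}.0\in\calD_n, \]
for a positive constant $C$ depending only on $m$ and $n$.

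Using the unimodularity of $\Sp_{2n}(\R)$ (change of variable $g\mapsto g^{-1}$), the Haar measure formula \eqref{eq:059}, and the identification of $\calH_n$ with $\calD_n$ via $\ell$, the $L^1$-integrability of $c$ is equivalent to the convergence of
\[ \int_{\calD_n}\det(I_n-w^*w)^{m/2}\,d\mathsf v_\calD(w), \]
or, by \eqref{eq:009}, of
\[ \int_{\calD_n}\det(I_n-w^*w)^{\frac m2-n-1}\prod_{1\leq r\leq s\leq n}du_{r,s}\,dv_{r,s}. \]
Parametrizing $w\in\calD_n$ via the Autonne--Takagi factorization $w=k\,\Diag(\sigma_1,\ldots,\sigma_n)\,k^\top$ with $k\in\U(n)$ and $\sigma_i\in[0,1)$, the integrand near $\partial\calD_n$ behaves like $\prod_i(1-\sigma_i^2)^{m/2-n-1}$, so the integral converges if and only if $m/2-n-1>-1$, i.e.\ $m>2n$.

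The step I expect to be the main obstacle is the explicit identification $|c(g)|=C\,\det(I_n-w_g^*w_g)^{m/2}$: although it ultimately boils down to matrix manipulations via the Cayley transform, tracking the cocycle \eqref{eq:040} through $\ell$ and $\ell^{-1}$ and keeping honest the normalization constant from the reproducing kernel of $D_m$ requires some care.
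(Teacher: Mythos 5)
The paper's own ``proof'' of this lemma is a one-line citation of the Hecht--Schmid integrability criterion \cite{hecht_schmid}, applied to the Harish-Chandra parameter $-\sum_{r=1}^n(m-r)e_r$; your argument is a genuinely different, self-contained computational route, and its core is correct. The identity $c(g)=c_m^{-1}\,j(\ell g^{-1}\ell^{-1},0)^{-m}$ holds (you do not even need the reproducing kernel: $D_{m,0}$ is orthogonal to every $D_{m,d}$ with $d>0$, so $\scal{f}{\mathbf 1}_{D_m}=c_m^{-1}f(0)$ for $f\in D_m$), the Cayley-transform computation gives $\abs{c(g)}=c_m^{-1}\det(I_n-w_g^*w_g)^{m/2}$, and unimodularity together with \eqref{eq:059} and \eqref{eq:009} converts $\norm{c}_{L^1}$ into $2^{n(n+1)}c_m^{-1}\int_{\calD_n}\det(I_n-w^*w)^{m/2-n-1}\prod du_{r,s}\,dv_{r,s}$, which by Hua's formula \cite[(2.3.1)]{hua63} (already used in \eqref{eq:085}) is finite precisely when $m/2>n$. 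This completely settles the ``only if'' direction: for $m\le 2n$ the single coefficient $c_{\mathbf 1,\mathbf 1}$ fails to be integrable, so $\pi_m$ is not integrable, and the relation $c^{\pi_m^*}_{h^*,(h')^*}=\overline{c^{\pi_m}_{h,h'}}$ correctly transfers everything to $\pi_m^*$.

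The gap is in the ``if'' direction. You need the implication ``one nonzero $K$-finite matrix coefficient in $L^1$ $\Rightarrow$ all $K$-finite matrix coefficients in $L^1$,'' and your appeal to ``general facts about discrete series (e.g.\ \cite{knapp1986})'' does not carry this: the statement is true for discrete series, but it is essentially the content of the Casselman--Mili\v ci\'c asymptotic expansion of matrix coefficients, i.e.\ the same machinery that proves the Hecht--Schmid criterion the paper cites, so as written the hard input is hidden rather than supplied. (Note also that the step you flag as the main obstacle, the explicit formula for $\abs{c(g)}$, is in fact unproblematic; the reduction to a single coefficient is where the real work lies.) Two honest repairs: (a) cite Hecht--Schmid or Mili\v ci\'c precisely, whose statements cover all $K$-finite coefficients simultaneously, at which point the explicit integral becomes redundant and one recovers the paper's proof; or (b) exploit the bounded realization: writing $\ell g^{-1}\ell^{-1}=\left(\begin{smallmatrix}A&B\\\overline B&\overline A\end{smallmatrix}\right)$ with $AA^*-BB^*=I_n$, every coefficient $\scal{\pi_m^\ell(g)p_\mu}{p_\nu}_{D_m}$ is a finite sum of derivatives at $0$ of $\det(\overline Bw+\overline A)^{-m}\mu\left((Aw+B)(\overline Bw+\overline A)^{-1}\right)$, hence equals $\det\overline A^{-m}$ times a polynomial in the entries of $B\overline A^{-1}$, $\overline A^{-1}\overline B$, and $\overline A^{-1}$, all of which have operator norm at most $1$; this gives the uniform bound $O(\abs{c(g)})$ and genuinely reduces integrability of all $K$-finite coefficients to that of $c$. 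Without (a) or (b), your argument proves only the non-integrability half of the lemma.
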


\section{Siegel cusp forms}\label{sec:021}

Throughout this section, let $ \Gamma $ be a congruence subgroup of $ \Sp_{2n}(\Z) $ of level $ N\in\mathbb Z_{>0} $, i.e., a subgroup of $ \Sp_{2n}(\Z) $ containing the principal level $ N $ congruence subgroup
\[ \Gamma_n(N)=\left\{g\in\Sp_{2n}(\Z):g\equiv I_{2n}\pmod N\right\}. \]
A Siegel cusp form of weight $ m $ for $ \Gamma $ is a holomorphic function $ f:\calH_n\to\C $ with the following two properties:
\begin{enumerate}[label=\textup{(S\arabic*)}]
	\item\label{enum:002:1} $ f\big|_m\gamma=f $ for all $ \gamma\in\Gamma $.
	\item\label{enum:002:2} $ \sup_{z\in\calH_n}\abs{f(z)}\det y^{\frac m2}<\infty $.
\end{enumerate}
Such functions constitute a finite-dimensional complex vector space $ S_m(\Gamma) $, which we equip with the Petersson inner product
\begin{equation}\label{eq:017}
\scal{f_1}{f_2}_{\Gamma\backslash\calH_n}=\varepsilon_\Gamma^{-1}\int_{\Gamma\backslash\calH_n}f_1(z)\,\overline{f_2(z)}\,\det y^m\,d\mathsf v(z),
\end{equation}
where $ \varepsilon_\Gamma=\abs{\Gamma\cap\left\{\pm I_{2n}\right\}} $.

Let $ \mathcal A_{cusp}(\Gamma\backslash\Sp_{2n}(\R)) $ be the space of cuspidal automorphic forms for $ \Gamma $, i.e., smooth functions $ \varphi:\Sp_{2n}(\R)\to\C $ with the following properties:
\begin{enumerate}[label=\textup{(A\arabic*)}]
	\item\label{enum:001:1} $ \varphi(\gamma g)=\varphi(g) $ for all $ \gamma\in\Gamma $ and $ g\in\Sp_{2n}(\R) $.
	\item $ \varphi $ is $ \mathcal Z(\mathfrak g) $-finite, i.e., $ \dim_\C\calZ(\mathfrak g)\varphi<\infty $.
	\item $ \varphi $ is $ K $-finite on the right, i.e., $ \dim_\C\mathrm{span}_\C\left\{\varphi(\spacedcdot k):k\in K\right\}<\infty $.
	\item\label{enum:001:4} $ \varphi $ is cuspidal, i.e., for every proper $ \Q $-parabolic subgroup $ P $ of $ \Sp_{2n} $, we have
	\[ \int_{(\Gamma\cap U(\R))\backslash U(\R)}\varphi(ug)\,du=0,\qquad g\in\Sp_{2n}(\R), \]
	where $ U $ is the unipotent radical of $ P $.	
	\item\label{enum:001:5} $ \varphi $ satisifies the following, assuming \ref{enum:001:1}--\ref{enum:001:4} mutually equivalent \cite[\S1.8]{bj}, conditions:
	\begin{enumerate}
		\item $ \varphi $ is bounded.
		\item $ \varphi\in L^2(\Gamma\backslash\Sp_{2n}(\R)) $.
	\end{enumerate}
\end{enumerate}
The space $ \mathcal A_{cusp}(\Gamma\backslash\Sp_{2n}(\R)) $ is the $ (\mathfrak g,K) $-module of $ \mathcal Z(\mathfrak g) $-finite, $ K $-finite, smooth vectors in the right regular representation of $ \Sp_{2n}(\R) $ on the space $ L^2_{cusp}(\Gamma\backslash\Sp_{2n}(\R)) $ of  cuspidal (equivalence classes of) functions in $ L^2(\Gamma\backslash\Sp_{2n}(\R)) $ \cite[\S2.2]{bj}. 

Next, let us introduce the classical lift $ \Phi_m $ that maps each function $ f:\mathcal H_n\to\mathbb C $ to the function $ F_f:\Sp_{2n}(\R)\to\C $,
\begin{equation}\label{eq:013}
	F_f(g)=\left(f\big|_mg\right)(iI_n)=\chi_m(k_u)\det y^{\frac m2}f(x+iy),\quad g=n_xa_yk_u\in\Sp_{2n}(\R).
\end{equation}
One checks easily that for every $ \gamma\in\Sp_{2n}(\R) $,
\begin{equation}\label{eq:014}
f\big|_m\gamma=f \qquad\text{if and only if}\qquad  F_f(\gamma\spacedcdot)=F_f.
\end{equation}
Moreover, by an obvious variant of \cite[Lem.\ 7]{asgari}, for every smooth function $ f:\calH_n\to\C $ the following equivalence holds:
\begin{equation}\label{eq:083}
f\text{ is holomorphic}\quad\Leftrightarrow\quad \mathfrak p_\C^-F_f=0.  
\end{equation}

Lacking a suitable reference, we sketch a proof of the following well-known

\begin{proposition}\label{prop:034}
	The assignment $ f\mapsto F_f $ defines a unitary isomorphism 
	\[ \Lambda_m:S_m(\Gamma)\to\mathcal A_{cusp}(\Gamma\backslash\mathrm{Sp}_{2n}(\R))_{[\chi_{m}]}^{\mathfrak p_\C^-}, \]
	where $ \mathcal A_{cusp}(\Gamma\backslash\mathrm{Sp}_{2n}(\R))_{[\chi_{m}]}^{\mathfrak p_\C^-} $ is regarded as a subspace of the Hilbert space $ L^2(\Gamma\backslash\Sp_{2n}(\R)) $.
\end{proposition}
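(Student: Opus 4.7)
The plan is to verify that $\Lambda_m$ lands in the stated target space, construct its inverse explicitly, and then obtain unitarity from the measure formula \eqref{eq:016}. For $f\in S_m(\Gamma)$, I would read off properties \ref{enum:001:1}--\ref{enum:001:5} of $F_f$ as follows. Left $\Gamma$-invariance \ref{enum:001:1} is \eqref{eq:014}. Combining the Iwasawa decomposition \eqref{eq:007}, the isomorphism $u\mapsto k_u$, and \eqref{eq:013} yields $F_f(gk_u)=\chi_m(k_u)F_f(g)$, placing $F_f$ in the $[\chi_m]$-isotypic component and making it $K$-finite of right $K$-type $\chi_m$. The annihilation $\mathfrak p_\C^- F_f=0$ is \eqref{eq:083}. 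By the argument already used in the proof of Lemma \ref{lem:019}\ref{lem:019:2}, any $\chi_m$-eigenvector annihilated by $\mathfrak p_\C^-$ is a highest weight vector whose infinitesimal character is that of $\pi_m^*$, so $\calZ(\frakg)$ acts on $F_f$ by a scalar and $\calZ(\frakg)$-finiteness follows. Boundedness of $F_f$, and hence both parts of \ref{enum:001:5}, is immediate from $|F_f(g)|=\det y^{m/2}|f(x+iy)|$ together with \ref{enum:002:2}.

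The delicate step is cuspidality \ref{enum:001:4}. For a proper $\Q$-parabolic $P\leq\Sp_{2n}$ with unipotent radical $U$, Iwasawa decomposing $g$ and invoking \eqref{eq:013} recasts the integral $\int_{(\Gamma\cap U(\R))\backslash U(\R)}F_f(ug)\,du$ as a classical constant term of $f$ along a boundary component of $\calH_n$. By Koecher's principle combined with the growth condition \ref{enum:002:2}, the Fourier expansion of $f$ is supported on positive definite symmetric $T$, and each such constant term picks out only singular-$T$ Fourier coefficients; hence it vanishes. This is the step I expect to require the most care, as it rests on the classical analysis of boundary components of the Siegel upper half-space rather than on representation theory.

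Injectivity of $\Lambda_m$ is immediate from \eqref{eq:013}: $f(x+iy)=\det y^{-m/2}F_f(n_x a_y)$. For surjectivity I would set, for $\varphi$ in the target, $f_\varphi(x+iy):=\det y^{-m/2}\varphi(n_x a_y)$; since $\chi_m$ is one-dimensional and $\varphi$ lies in the $\chi_m$-isotypic component, $\varphi(gk_u)=\chi_m(k_u)\varphi(g)$, so the Iwasawa decomposition gives $F_{f_\varphi}=\varphi$. Holomorphy of $f_\varphi$ comes from \eqref{eq:083}, its $\Gamma$-invariance from \eqref{eq:014}, and the growth condition \ref{enum:002:2} from the boundedness of the cuspidal automorphic form $\varphi$. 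Finally, unitarity is a direct calculation from \eqref{eq:016}: since $\vol(K)=1$ and $|\chi_m(k_u)|=1$,
\[ \scal{F_{f_1}}{F_{f_2}}_{L^2(\Gamma\backslash\Sp_{2n}(\R))}=\varepsilon_\Gamma^{-1}\int_{\Gamma\backslash\calH_n}f_1(z)\,\overline{f_2(z)}\,\det y^m\,d\mathsf v(z)=\scal{f_1}{f_2}_{\Gamma\backslash\calH_n}. \]
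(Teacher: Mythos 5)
Your proposal is correct and follows essentially the same route as the paper: verify \ref{enum:001:1}--\ref{enum:001:5} for $F_f$ (with cuspidality handled, as in the reference \cite{asgari} that the paper cites, by exactly the Fourier-expansion/constant-term argument you sketch), invert the lift explicitly via $f(x+iy)=\det y^{-m/2}\varphi(n_xa_y)$, and read off unitarity from \eqref{eq:016}. The only cosmetic difference is that you deduce $\calZ(\frakg)$-finiteness from the scalar action of the center on a highest weight vector for a suitable positive system, whereas the paper invokes the equivalent containment $\calZ(\frakg)\subseteq\calU(\frakk)+\calU(\frakg)(\frakk_\C^++\frakp_\C^-)$.
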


\begin{proof}
	Let $ f\in S_m(\Gamma) $. The function $ F_f $ is obviously smooth and satisfies \ref{enum:001:1} by \eqref{eq:014}. It transforms on the right as $ \chi_m $ by \eqref{eq:013} and is hence $ K $-finite on the right and annihilated by $ \mathfrak k_\C^+ $. It is also annihilated by $ \mathfrak p_\C^- $ by \eqref{eq:083} and is hence $ \mathcal Z(\mathfrak g) $-finite, as  $ \mathcal Z(\mathfrak g)\subseteq\calU(\mathfrak k)+\calU(\mathfrak g)(\mathfrak k_\C^++\mathfrak p_\C^-) $ by an application of \cite[Lem.\ 8.17]{knapp1986} with a suitable choice of positive roots. Moreover, $ F_f $ is cuspidal by an obvious modification of \cite[proof of Lem.\ 5]{asgari}, and we have $ \norm{F_f}_{L^2(\Gamma\backslash\Sp_{2n}(\R))}=\norm f_{S_m(\Gamma)} $ by \eqref{eq:016}, \eqref{eq:013}, and \eqref{eq:017}. Thus, $ F_f\in\mathcal A_{cusp}(\Gamma\backslash\mathrm{Sp}_{2n}(\R))_{[\chi_{m}]}^{\mathfrak p_\C^-} $, and $ \Lambda_m $ is a well-defined linear isometry.
	
	To prove the surjectivity of $ \Lambda_m $, let $ \varphi\in\mathcal A_{cusp}(\Gamma\backslash\mathrm{Sp}_{2n}(\R))_{[\chi_{m}]}^{\mathfrak p_\C^-} $. The unique function $ f:\mathcal H_n\to\C $ such that $  \varphi=F_f $ is obviously given by
	$ f(z)=\varphi(n_xa_y)\,\det y^{-\frac m2} $ for $ z=x+iy\in\calH_n $.
	This function is holomorphic by \eqref{eq:083}, satisfies \ref{enum:002:1} by \eqref{eq:014}, and satisfies \ref{enum:002:2} by the boundedness of $ \varphi $, hence belongs to $ S_m(\Gamma) $. This proves the claim.
\end{proof}

In Prop.\ \ref{prop:081} below, we will give another description of the image $ \mathcal A_{cusp}(\Gamma\backslash\mathrm{Sp}_{2n}(\R))_{[\chi_{m}]}^{\mathfrak p_\C^-} $ of the unitary isomorphism $ \Lambda_m $. To this end, we note that lemma \cite[Lem.\ 77]{harish1966} continues to hold after replacing the space $ L^2(G) $ in its statement by $L^2(\Delta\backslash G)$, where $ \Delta $ is a discrete subgroup of $ G $. In particular, we have the following

\begin{lemma}\label{lem:018}
	Let $ \varphi\in\mathcal A_{cusp}(\Gamma\backslash\mathrm{Sp}_{2n}(\R))\setminus\left\{0\right\} $. Then, the smallest closed $ \Sp_{2n}(\R) $-invariant subspace of $ L^2_{cusp}(\Gamma\backslash\Sp_{2n}(\R)) $ containing $ \varphi $ is an orthogonal sum of finitely many irreducible closed $ \Sp_{2n}(\R) $-invariant subspaces. 
\end{lemma}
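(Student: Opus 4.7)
The plan is to adapt Harish-Chandra's proof of \cite[Lem.\ 77]{harish1966} to the quotient setting, replacing the Plancherel-type decomposition of $L^2(\Sp_{2n}(\R))$ used there by the discrete decomposition of the cuspidal spectrum. The essential input is the theorem of Gelfand--Piatetski-Shapiro (see \cite[\S2.2]{bj}) asserting that
\[ L^2_{cusp}(\Gamma\backslash\Sp_{2n}(\R))=\widehat{\bigoplus_{\alpha\in A}}V_\alpha \]
as a Hilbert direct sum of closed, irreducible, mutually orthogonal $\Sp_{2n}(\R)$-invariant subspaces, with each abstract unitary equivalence class occurring with finite multiplicity. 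Writing $\varphi=\sum_\alpha\varphi_\alpha$ with $\varphi_\alpha\in V_\alpha$, the smallest closed $\Sp_{2n}(\R)$-invariant subspace containing $\varphi$ equals the orthogonal sum $\bigoplus_{\alpha:\,\varphi_\alpha\neq0}V_\alpha$, since each $V_\alpha$ is irreducible. The lemma thus reduces to the claim that only finitely many $\varphi_\alpha$ are non-zero.

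To establish this finiteness, I would exploit the two remaining defining properties of $\mathcal A_{cusp}(\Gamma\backslash\Sp_{2n}(\R))$ beyond $L^2$-ness and cuspidality: the $K$-finiteness on the right and the $\mathcal Z(\mathfrak g)$-finiteness. Let $S$ be the finite set of equivalence classes of irreducible $K$-representations appearing in $\varphi$, and let $J\subseteq\mathcal Z(\mathfrak g)$ be an ideal of finite codimension annihilating $\varphi$. The orthogonal projection $P_\alpha$ onto the $\Sp_{2n}(\R)$-invariant subspace $V_\alpha$ commutes both with the right $K$-action and with the differentiated action of $\mathcal Z(\mathfrak g)$ on smooth vectors, so each $\varphi_\alpha=P_\alpha\varphi$ has $K$-types contained in $S$ and is annihilated by $J$. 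Consequently, if $\varphi_\alpha\neq0$, then $V_\alpha$ contains a non-zero vector with $K$-type in $S$, and its infinitesimal character $\chi_\alpha:\mathcal Z(\mathfrak g)\to\C$ must vanish on $J$.

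Since $\mathcal Z(\mathfrak g)/J$ is finite-dimensional, only finitely many algebra homomorphisms $\chi_\alpha$ can vanish on $J$. By Harish-Chandra's finiteness theorem for irreducible admissible $(\mathfrak g,K)$-modules, for each such infinitesimal character and each fixed $K$-type, only finitely many equivalence classes of irreducible admissible $(\mathfrak g,K)$-modules contain that $K$-type; hence only finitely many unitary equivalence classes $[V_\alpha]$ can satisfy both constraints. Combined with the finite multiplicity of each equivalence class in $L^2_{cusp}(\Gamma\backslash\Sp_{2n}(\R))$, this shows that $\{\alpha\in A:\varphi_\alpha\neq0\}$ is finite, completing the proof. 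The principal obstacle is this last finiteness count, which depends on Harish-Chandra's admissibility and subquotient theorems for real reductive groups; once these are invoked, the remainder of the argument is a direct transcription of the original proof from $L^2(G)$ to $L^2_{cusp}(\Gamma\backslash G)$.
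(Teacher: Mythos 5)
Your proposal takes a genuinely different (and more explicit) route than the paper, which offers no argument at all: it simply asserts that Harish-Chandra's \cite[Lem.\ 77]{harish1966}, originally stated for $K$-finite, $\mathcal Z(\mathfrak g)$-finite elements of $L^2(G)$, ``continues to hold'' with $L^2(G)$ replaced by $L^2(\Delta\backslash G)$, and records the present lemma as the special case of $L^2_{cusp}(\Gamma\backslash\Sp_{2n}(\R))$. You instead run the standard Gelfand--Piatetski-Shapiro argument: decompose the cuspidal spectrum discretely with finite multiplicities, project $\varphi$ onto the irreducible summands, and bound the number of non-zero projections by combining (a) the finiteness of the set of characters of $\mathcal Z(\mathfrak g)$ killing a finite-codimension ideal, (b) Harish-Chandra's finiteness theorem for irreducible admissible representations with prescribed infinitesimal character and $K$-type, and (c) finite multiplicity. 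This finiteness count is correct and is essentially the classical proof that a cuspidal automorphic form generates an admissible, finite-length subrepresentation; what it buys over the paper's approach is a self-contained argument that does not require the reader to re-examine Harish-Chandra's proof and check that it survives the passage from $L^2(G)$ to the quotient.

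One intermediate claim is false as stated, though easily repaired: the smallest closed $\Sp_{2n}(\R)$-invariant subspace containing $\varphi$ need \emph{not} equal $\bigoplus_{\alpha:\,\varphi_\alpha\neq0}V_\alpha$. If the same unitary equivalence class occurs in two summands $V_1\cong V_2$ and $\varphi=(v,v)$ under a fixed identification, the generated subspace is the diagonal copy of $V_1$, a proper irreducible subspace of $V_1\oplus V_2$. What your argument actually gives is that the generated subspace is \emph{contained in} the finite orthogonal sum $\bigoplus_{\alpha:\,\varphi_\alpha\neq0}V_\alpha$; one then concludes by the standard fact that every closed invariant subspace of a finite orthogonal sum of irreducible unitary subrepresentations is itself an orthogonal sum of (at most as many) irreducibles. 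With that substitution the proof is complete. A second, minor point worth a sentence in a written version: to assert that the projections $P_\alpha$ commute with the action of $\mathcal Z(\mathfrak g)$ on $\varphi$, one should note that $\varphi$, being $\mathcal Z(\mathfrak g)$-finite, $K$-finite, and cuspidal, is a smooth (indeed analytic) vector of the right regular representation, so the differentiated action is defined and the intertwining property applies.
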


\begin{lemma}\label{lem:017}
	Suppose that $ m\in\Z_{>n} $. Let $ \varphi\in\mathcal A_{cusp}(\Gamma\backslash\mathrm{Sp}_{2n}(\R))_{[\chi_{m}]}^{\mathfrak p_\C^-}\setminus\left\{0\right\} $. Then, the smallest closed $ \Sp_{2n}(\R) $-invariant subspace $ E $ of $ L^2_{cusp}(\Gamma\backslash\Sp_{2n}(\R)) $ containing $ \varphi $ is unitarily equivalent to $ \pi_m^* $, and $ E_{[\chi_m]}=\C\varphi $.
	
\end{lemma}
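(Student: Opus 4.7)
The plan is first to apply Lem.\ \ref{lem:018} to decompose $E=E_1\oplus\cdots\oplus E_r$ as an orthogonal sum of nonzero irreducible closed $\Sp_{2n}(\R)$-invariant subspaces, to show each $E_i$ is unitarily equivalent to $\pi_m^*$, and then to prove $r=1$ by constructing an explicit $\Sp_{2n}(\R)$-equivariant injection $H_m^*\hookrightarrow E$ whose image is forced to be $E$.

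For the first step, let $P_i:E\to E_i$ be the orthogonal projection and set $\varphi_i=P_i(\varphi)$. The minimality of $E$ forces every $\varphi_i$ to be nonzero, since otherwise $\varphi$ would lie in the proper closed $\Sp_{2n}(\R)$-invariant subspace $\bigoplus_{j\neq i}E_j$. Each $P_i$ is a bounded $\Sp_{2n}(\R)$-equivariant operator, hence preserves smoothness, commutes with the $\calU(\frakg)$-action on smooth vectors, and preserves $K$-isotypic components. Consequently, each $\varphi_i$ inherits from $\varphi$ the properties of being smooth, of $K$-type $\chi_m$, and annihilated by $\frakp_\C^-$. Applying Lem.\ \ref{lem:019}\ref{lem:019:2} to $E_i$ yields $E_i\cong\pi_m^*$, and \eqref{eq:084} gives $(E_i)_{[\chi_m]}=\C\varphi_i$.

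Next, I fix a nonzero vector $v_0\in(H_m^*)_{[\chi_m]}$. By Schur's lemma applied to the irreducible unitary representations $\pi_m^*$ and $E_i$, for each $i$ there is an $\Sp_{2n}(\R)$-equivariant bounded linear isomorphism $\widetilde U_i:H_m^*\to E_i$ (a suitably rescaled unitary equivalence) satisfying $\widetilde U_i(v_0)=\varphi_i$. Define $T:H_m^*\to E$ by $T(w)=\sum_{i=1}^r\widetilde U_i(w)$; this is a bounded $\Sp_{2n}(\R)$-equivariant injection mapping $v_0$ to $\varphi$.

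The main obstacle is ruling out that $E$ could consist of several copies of $\pi_m^*$, and this is precisely where the Schur-type argument for $T$ is decisive. The composition $T^*T$ is a bounded positive $\Sp_{2n}(\R)$-equivariant self-intertwiner of the irreducible representation $\pi_m^*$, so by Schur's lemma $T^*T=c\cdot I$ for some $c>0$; hence $T/\sqrt c$ is an isometry whose image $T(H_m^*)$ is an irreducible closed $\Sp_{2n}(\R)$-invariant subspace of $E$ equivalent to $\pi_m^*$. Since $\varphi=T(v_0)\in T(H_m^*)$, the minimality of $E$ forces $E=T(H_m^*)$; in particular $r=1$, $E$ is unitarily equivalent to $\pi_m^*$, and a final application of \eqref{eq:084} gives $E_{[\chi_m]}=\C\varphi$.
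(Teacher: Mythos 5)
Your proof is correct, and its first half (the decomposition of $E$ via Lem.\ \ref{lem:018}, the non-vanishing of the projections $\varphi_i$, and the identification $E_i\cong\pi_m^*$ via Lem.\ \ref{lem:019}\ref{lem:019:2} and \eqref{eq:084}) coincides with the paper's argument, including the construction of the intertwiner $T=\sum_i\widetilde U_i$ normalized so that $Tv_0=\varphi$. Where you genuinely diverge is in the concluding step. The paper shows that $E$ is admissible, invokes Harish-Chandra's density theorem to conclude that $\varphi$ generates the $(\frakg,K)$-module $E_K$, deduces from the irreducibility of $(H_m^*)_K$ that $T$ restricts to a $(\frakg,K)$-isomorphism $(H_m^*)_K\to E_K$, and then passes from infinitesimal equivalence to unitary equivalence via Wallach's Thm.\ 3.4.11--3.4.12. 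You instead stay entirely at the level of bounded operators on Hilbert spaces: $T^*T$ lies in the commutant of the irreducible unitary representation $\pi_m^*$, so $T^*T=cI$ with $c>0$ (as $Tv_0=\varphi\neq0$), whence $T$ is bounded below, its range is a closed irreducible invariant subspace containing $\varphi$, and the minimality of $E$ forces $E=T(H_m^*)$. This is a clean trade-off: your route avoids admissibility, the cyclicity of $\varphi$ in $E_K$, and the infinitesimal-versus-unitary equivalence theorem, at the cost of nothing beyond Schur's lemma for unitary representations and the fact that an operator bounded below has closed range; the paper's route is heavier but records along the way the $(\frakg,K)$-module statement that $\varphi$ generates $E_K$. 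Both are complete proofs of the lemma.
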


\begin{proof}
	By Lem.\ \ref{lem:018}, $ E $ is an orthogonal sum of finitely many irreducible closed $ \Sp_{2n}(\R) $-invariant subspaces $ E_r $, $ r=1,\ldots,p $. In particular, $ E $ is admissible by \cite[Thm.\ 3.4.10]{wallachI}, hence  by Harish-Chandra's result \cite[Thm.\ 0.4]{knapp_vogan} and the definition of $ E $, $ \varphi $ generates the $ (\mathfrak g,K) $-module $ E_K $.
	
	By \eqref{eq:084}, we can fix $ f_m^*\in H_m^*\setminus\left\{0\right\} $ such that $ (H_m^*)_{[\chi_m]}=\C f_m^* $. For each $ r $, the orthogonal projection $ \varphi_r $ of $ \varphi $ onto $ E_r $ is non-zero by the definition of $ E $, transforms on the right as $ \chi_m $, and is annihilated by $ \mathfrak p_\C^- $, hence $ E_r\cong\left(\pi_m^*,H_m^*\right) $ by Lem.\ \ref{lem:019}\ref{lem:019:2}, and $ (E_r)_{[\chi_m]}=\C\varphi_r $ by \eqref{eq:084}. Thus, by Schur's lemma \cite[Lem.\ 1.2.1]{wallachI}, there exists a unique continuous $ \Sp_{2n}(\R) $-equivariant operator $ P_r:H_m^*\to E_r $ such that $ P_rf_m^*=\varphi_r $. The operator $ P=\sum_{r=1}^pP_r:H_m^*\to E $ is continuous and $ \Sp_{2n}(\R) $-equivariant, and $ P f_m^*=\varphi $. Since the $ (\mathfrak g,K) $-module $ (H_m^*)_K $ is irreducible by \cite[first part of Thm.\ 3.4.11]{wallachI}, and $ E_K $ is generated by $ \varphi $, $ P $ restricts to a $ (\mathfrak g,K) $-module isomorphism $ (H_m^*)_K\to E_K $. Thus, $ E $ is infinitesimally equivalent to the irreducible unitary representation $ \pi_m^* $, hence by \cite[Thm.\ 3.4.12 and second part of Thm.\ 3.4.11]{wallachI} it is unitarily equivalent to $ \pi_m^* $. In particular, the space $ E_{[\chi_m]} $ is $ 1 $-dimensional and hence, as it obviously contains $ \varphi $, equals $ \C\varphi $.
\end{proof}

\begin{comment}
	We have a  Hilbert space direct sum decomposition
	\[ L^2_{cusp}(\Gamma\backslash\Sp_{2n}(\R))=\bigoplus_{[\pi]}L^2_{cusp}(\Gamma\backslash\Sp_{2n}(\R))_{[\pi]}. \]
\end{comment}

\begin{proposition}\label{prop:081}
	Suppose that $ m\in\Z_{>n} $. Then,
	\begin{equation}\label{eq:033}
	\mathcal A_{cusp}(\Gamma\backslash\mathrm{Sp}_{2n}(\R))_{[\chi_{m}]}^{\mathfrak p_\C^-}=\left(L^2_{cusp}(\Gamma\backslash\Sp_{2n}(\R))_{[\pi_m^*]}\right)_{[\chi_m]}.
	\end{equation}
\end{proposition}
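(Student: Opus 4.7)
The plan is to establish the two inclusions of \eqref{eq:033} separately, in both directions exploiting Lem.\ \ref{lem:017} and Lem.\ \ref{lem:019}\ref{lem:019:2} together with the basic representation-theoretic structure of the (anti)holomorphic discrete series.

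For the inclusion ``$\subseteq$'', given a nonzero $\varphi\in\mathcal A_{cusp}(\Gamma\backslash\Sp_{2n}(\R))_{[\chi_m]}^{\mathfrak p_\C^-}$, Lem.\ \ref{lem:017} provides an irreducible closed $\Sp_{2n}(\R)$-invariant subspace $E\subseteq L^2_{cusp}(\Gamma\backslash\Sp_{2n}(\R))$ that is unitarily equivalent to $\pi_m^*$ and contains $\varphi$. Consequently $E\subseteq L^2_{cusp}(\Gamma\backslash\Sp_{2n}(\R))_{[\pi_m^*]}$, and since $\varphi$ transforms on the right as $\chi_m$, it lies in the $[\chi_m]$-component of this isotypic subspace.

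For the reverse inclusion, I would decompose $L^2_{cusp}(\Gamma\backslash\Sp_{2n}(\R))_{[\pi_m^*]}=\bigoplus_{\alpha}E_\alpha$ as an orthogonal Hilbert sum of irreducible closed invariant subspaces, each equivalent to $\pi_m^*$, and fix an arbitrary $\varphi$ in the $[\chi_m]$-component. Denote by $p_\alpha$ the orthogonal projection onto $E_\alpha$ and set $\varphi_\alpha=p_\alpha(\varphi)$. Being $\Sp_{2n}(\R)$-equivariant, $p_\alpha$ sends $\varphi$ into $(E_\alpha)_{[\chi_m]}$, a space that by \eqref{eq:084} and the unitary equivalence $E_\alpha\cong H_m^*$ is one-dimensional and, by Lem.\ \ref{lem:019}\ref{lem:019:2}, annihilated by $\mathfrak p_\C^-$ at the level of smooth vectors. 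To conclude that $\varphi$ itself belongs to $\mathcal A_{cusp}(\Gamma\backslash\Sp_{2n}(\R))_{[\chi_m]}^{\mathfrak p_\C^-}$, it suffices by the characterization of $\mathcal A_{cusp}$ recalled in Sect.\ \ref{sec:021} to check that $\varphi$ is smooth, right $K$-finite, $\mathcal Z(\mathfrak g)$-finite, and killed by $\mathfrak p_\C^-$. Right $K$-finiteness is immediate from the $\chi_m$-transformation law; Harish-Chandra's smoothness theorem for $K$-finite vectors in a unitary representation then supplies a smooth representative; $\mathcal Z(\mathfrak g)$ acts on each $E_\alpha$ through the common infinitesimal character of $\pi_m^*$ and hence on $\varphi$ as a scalar; and for $X\in\mathfrak p_\C^-$, differentiating the equivariance of $p_\alpha$ at the identity gives $p_\alpha(X\varphi)=X\varphi_\alpha=0$ for every $\alpha$, whence $X\varphi=0$.

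The delicate point is the passage from the abstract $(\mathfrak g,K)$-module statement of Lem.\ \ref{lem:019}\ref{lem:019:2} to the concrete vanishing of $\mathfrak p_\C^-$ on the $L^2$-class representatives $\varphi_\alpha$; this is bridged by Harish-Chandra's smoothness theorem, which ensures that the infinitesimal action of $\mathfrak g$ is defined on our $K$-finite vectors and is intertwined by the unitary equivalences $E_\alpha\cong H_m^*$.
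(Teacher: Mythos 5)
Your proof is correct and follows essentially the same route as the paper's: the inclusion from left to right via Lem.~\ref{lem:017}, and the reverse inclusion by decomposing $L^2_{cusp}(\Gamma\backslash\Sp_{2n}(\R))_{[\pi_m^*]}$ into irreducible closed $\Sp_{2n}(\R)$-invariant subspaces and invoking Lem.~\ref{lem:019}\ref{lem:019:2} together with \eqref{eq:084}. The only point the paper makes explicit that you leave implicit is that this orthogonal sum is \emph{finite}, which is what makes the isotypic component admissible and lets your verifications of smoothness and $\mathcal Z(\mathfrak g)$-finiteness go through as stated.
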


\begin{proof}
	The inclusion from left to right holds by Lem.\ \ref{lem:017}. The reverse inclusion follows easily from Lem.\ \ref{lem:019}\ref{lem:019:2} and \eqref{eq:084} once we remember the well-known fact that for every irreducible unitary representation $ \pi $ of $ \Sp_{2n}(\R) $, the space $ L^2_{cusp}(\Gamma\backslash\Sp_{2n}(\R))_{[\pi]} $ is an orthogonal sum of finitely many (possibly zero) irreducible closed $ \Sp_{2n}(\R) $-invariant subspaces.
\end{proof}

\section{Matrix coefficients and Poincar\'e series}\label{sec:039}

Let $ \Gamma $ be a discrete subgroup of $ \Sp_{2n}(\R) $. For every $ \varphi\in L^1(\Sp_{2n}(\R)) $, the Poincar\'e series
\[ (P_\Gamma\varphi)(g)=\sum_{\gamma\in\Gamma}\varphi(\gamma g) \]
converges absolutely almost everywhere on $ \Sp_{2n}(\R) $, and $ P_\Gamma\varphi\in L^1(\Gamma\backslash\Sp_{2n}(\R)) $ \cite[\S4]{muic09}. This in particular holds if $ \varphi $ is a $ K $-finite matrix coefficient $ c_{h,h'} $ of an integrable representation $ \pi $ of $ \Sp_{2n}(\R) $. Moreover, since such a matrix coefficient is obviously a $ \mathcal Z(\mathfrak g) $-finite and left and right $ K $-finite smooth function on $ \Sp_{2n}(\R) $, the Poincar\'e series $ P_\Gamma c_{h,h'} $ converges absolutely and uniformly on compact subsets of $ \Sp_{2n}(\R) $ and defines a bounded function on $ \Sp_{2n}(\R) $ by \cite[proof of Thm.\ 3.10(i)]{muic09} and \cite[Lem.\ 6.3]{muic19}.

From now on, let $ \Gamma $ be a congruence subgroup of $ \Sp_{2n}(\Z) $. Since $ \vol(\Gamma\backslash\Sp_{2n}(\R))<\infty $, the boundedness of $ P_\Gamma c_{h,h'} $ and the cuspidality of $ c_{h,h'} $ \cite[7.7.1]{wallachI} imply that $ P_\Gamma c_{h,h'}\in L^2_{cusp}(\Gamma\backslash\Sp_{2n}(\R)) $. In fact, we have the following

\begin{lemma}\label{lem:023}
	 Let $ (\pi,H) $ be an integrable representation of $ \Sp_{2n}(\R) $. Then,
	\begin{equation}\label{eq:022}
		 \left(L^2_{cusp}(\Gamma\backslash\Sp_{2n}(\R))_{[\pi]}\right)_K=\mathrm{span}_\C\left\{P_\Gamma c_{h,h'}:h,h'\in H_K\right\}. 
	\end{equation}
\end{lemma}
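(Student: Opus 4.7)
The plan is to verify both inclusions; the forward direction ($\supseteq$) is largely bookkeeping, while the reverse direction ($\subseteq$) carries the substance. For the forward direction, fix $ h, h' \in H_K $; the paragraph preceding the lemma already places $ P_\Gamma c_{h, h'} $ in $ L^2_{cusp}(\Gamma\backslash\Sp_{2n}(\R)) $. Right $ K $-finiteness follows from $ (R(k) c_{h, h'})(g) = c_{\pi(k) h, h'}(g) $ together with the finite-dimensionality of $ \mathrm{span}_\C \pi(K) h $ and the commutation of $ P_\Gamma $ with right translations. For the isotypic assertion, the map $ h \mapsto P_\Gamma c_{h, h'} $ (with $ h' $ fixed) is a $ (\frakg, K) $-homomorphism from $ H_K $ to the $ K $-finite part of $ L^2_{cusp}(\Gamma\backslash\Sp_{2n}(\R)) $, since $ X c_{h, h'} = c_{\pi(X) h, h'} $ for $ X \in \frakg $; its image, being either zero or $ (\frakg, K) $-isomorphic to the irreducible $ H_K $, lies in $ \bigl(L^2_{cusp}(\Gamma\backslash\Sp_{2n}(\R))_{[\pi]}\bigr)_K $ by Harish-Chandra's equivalence of infinitesimal and unitary equivalence (as invoked in the proof of Lem.\ \ref{lem:017}).

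For the reverse direction, I would apply Lem.\ \ref{lem:018} inside the $ [\pi] $-isotypic to write $ L^2_{cusp}(\Gamma\backslash\Sp_{2n}(\R))_{[\pi]} = \bigoplus_{j=1}^N V_j $ as a finite orthogonal sum of irreducible closed $ \Sp_{2n}(\R) $-invariant subspaces, and fix $ \Sp_{2n}(\R) $-equivariant unitary isomorphisms $ U_j: H \to V_j $. Let $ W $ denote the right-hand side of \eqref{eq:022}; by the forward direction $ W $ is a $ (\frakg, K) $-submodule of $ \bigoplus_j (V_j)_K $ whose summands are all irreducible, so it suffices to show $ W \cap (V_j)_K \neq 0 $ for every $ j $. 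Arguing by contradiction, suppose $ V_j \perp W $ and pick $ \varphi_j = U_j(w_j) \in (V_j)_K \setminus \{0\} $, which is a smooth bounded cuspidal automorphic form. Unfolding the Poincar\'e series yields
\[ 0 = \scal{P_\Gamma c_{h, h'}}{\varphi_j}_{L^2(\Gamma\backslash\Sp_{2n}(\R))} = \int_{\Sp_{2n}(\R)} c_{h, h'}(g)\, \overline{\varphi_j(g)}\, dg \qquad (h, h' \in H_K). \]

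The crux is to realize $ \varphi_j $ itself as a matrix coefficient of $ \pi $, so that Schur's orthogonality relations apply. Fixing a unit vector $ h_0 \in H_K $ and invoking the integrable-case reproducing formula $ v = d_\pi \int_{\Sp_{2n}(\R)} c_{v, h_0}(g)\, \pi(g^{-1}) h_0\, dg $ (where $ d_\pi $ is the formal degree of $ \pi $, with Bochner convergence in $ H $ secured by $ c_{v, h_0} \in L^1(\Sp_{2n}(\R)) $), one applies $ U_j $ and passes to pointwise values to obtain, for $ v \in H_K $,
\[ U_j(v)(e) = d_\pi \int_{\Sp_{2n}(\R)} c_{v, h_0}(g)\, U_j(h_0)(g^{-1})\, dg; \]
because $ U_j(h_0) $ is bounded and $ v \mapsto c_{v, h_0} $ is continuous from $ H $ to $ L^1(\Sp_{2n}(\R)) $ by integrability, the right-hand side extends continuously to $ v \in H $, and Riesz produces $ z_j \in H $ with this expression equal to $ \scal{v}{z_j}_H $. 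Transporting along the $ \Sp_{2n}(\R) $-equivariance of $ U_j $ gives $ \varphi_j(g) = \scal{\pi(g) w_j}{z_j}_H = c_{w_j, z_j}(g) $, and Schur orthogonality rewrites the displayed vanishing as $ 0 = d_\pi^{-1}\, \scal{h}{w_j}_H\, \overline{\scal{h'}{z_j}_H} $ for all $ h, h' \in H_K $, forcing $ w_j = 0 $ (contradicting $ \varphi_j \neq 0 $) or $ z_j = 0 $ (forcing $ \varphi_j \equiv 0 $). The main obstacle is precisely the bounded extension of the evaluation functional $ v \mapsto U_j(v)(e) $ from $ H_K $ to $ H $, which is exactly where the integrability of $ \pi $ is indispensable.
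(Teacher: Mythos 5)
Your strategy is genuinely different from the paper's: the paper deduces the lemma from Mili\v ci\'c's density theorem \cite[Lem.\ 6.6]{muic19} together with the continuity of $ P_\Gamma $ on the $ L^1 $-closure $ \mathcal B_{h'} $ of $ \left\{c_{h,h'}:h\in H_K\right\} $ \cite[Thm.\ 6.4(i)]{muic19}, and then upgrades density of a $ (\frakg,K) $-submodule to equality on $ K $-finite vectors via admissibility \cite[Thm.\ 0.4]{knapp_vogan}. Your duality-plus-Schur-orthogonality route is the classical alternative, and the forward inclusion and the unfolding step are fine. However, the step you yourself flag as the main obstacle is resolved incorrectly: for an integrable representation the map $ v\mapsto c_{v,h_0} $ is \emph{not} continuous from $ (H,\norm{\spacedcdot}_H) $ to $ L^1(\Sp_{2n}(\R)) $. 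Integrability guarantees $ c_{v,h_0}\in L^1 $ for $ K $-finite $ v $, but gives no bound of $ \norm{c_{v,h_0}}_{L^1} $ by $ \norm v_H $. Already for $ n=1 $, taking $ v_d=f_{X^d,m} $ and $ h_0=f_{1,m} $, the formulas \eqref{eq:048}, \eqref{eq:045}, and \eqref{eq:009} give $ \norm{c_{v_d,h_0}}_{L^1}\asymp d^{\,1-m/2} $ while $ \norm{v_d}_{H_m}\asymp d^{\,(1-m)/2} $, so the ratio grows like $ d^{1/2} $. This unboundedness is precisely why Mili\v ci\'c and Mui\'c work with the $ L^1 $-closures $ \mathcal B_{h'} $ rather than with $ H $ itself. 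Consequently your Riesz step --- producing $ z_j\in H $ with $ U_j(v)(e)=\scal v{z_j}_H $ for all $ v\in H_K $, i.e., the claim that $ \varphi_j $ is an honest matrix coefficient --- is left without proof, and that claim is the real content of the lemma in your approach.

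There is also a logical slip in the reduction. To get $ W\supseteq(V_j)_K $ you correctly observe that it suffices to show $ W\cap(V_j)_K\neq0 $, but you then assume $ V_j\perp W $ for contradiction, which is not the negation of that statement. What your computation actually yields is that no nonzero $ K $-finite vector of $ V_j $ is orthogonal to $ W $; when the multiplicity $ N $ is at least $ 2 $, the complement $ W^\perp\cap\bigoplus_j(V_j)_K $ could a priori be a ``diagonal'' copy of $ H_K $ meeting no single $ (V_j)_K $, so this does not yet force $ W=\bigoplus_j(V_j)_K $. The repair is short --- run the same argument on a nonzero $ K $-finite vector of an arbitrary irreducible constituent of $ W^\perp\cap\bigoplus_j(V_j)_K $, whose closure is again equivalent to $ \pi $ by the reasoning of Lem.\ \ref{lem:017} and Lem.\ \ref{lem:018} --- but as written the reverse inclusion does not close even granting the matrix-coefficient realization.
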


\begin{proof}
	This is a special case of Mili\v ci\'c's result \cite[Lem.\ 6.6]{muic19}. More precisely, by \cite[Lem.\ 6.6]{muic19} we have
	\begin{equation}\label{eq:020}
		L^2_{cusp}(\Gamma\backslash\Sp_{2n}(\R))_{[\pi]}=\mathrm{Cl}_{L^2(\Gamma\backslash\Sp_{2n}(\R))}\sum_{h'\in H_K}P_\Gamma\mathcal B_{h'},
	\end{equation}
	where $ \mathcal B_{h'}=\mathrm{Cl}_{L^1(\Sp_{2n}(\R))}\left\{c_{h,h'}:h\in H_K\right\} $. Since by \cite[Thm.\ 6.4(i)]{muic19} the operator $ P_\Gamma:\mathcal B_{h'}\to L^2(\Gamma\backslash\Sp_{2n}(\R)) $ is continuous, the $ (\mathfrak g,K) $-module $ P_\Gamma\left(\left\{c_{h,h'}:h\in H_K\right\}\right) $ is dense in $ P_\Gamma\mathcal B_{h'} $, hence by \eqref{eq:020} the $ (\mathfrak g,K) $-module 
	\[ \mathrm{span}_\C\left\{P_\Gamma c_{h,h'}:h,h'\in H_K\right\}=\sum_{h'\in H_K}P_\Gamma\left(\left\{c_{h,h'}:h\in H_K\right\}\right) \]
	is dense in $ L^2_{cusp}(\Gamma\backslash\Sp_{2n}(\R))_{[\pi]} $. This implies \eqref{eq:022} by Harish-Chandra's result \cite[Thm.\ 0.4]{knapp_vogan} since $ L^2_{cusp}(\Gamma\backslash\Sp_{2n}(\R))_{[\pi]} $ is an admissible representation of $ \Sp_{2n}(\R) $ by the proof of Prop.\ \ref{prop:081}.
\end{proof}
 
\begin{proposition}\label{prop:029}
	Suppose that $ m\in\Z_{>2n} $. Then, 
	\begin{equation}\label{eq:024}
		\left(L^2_{cusp}(\Gamma\backslash\Sp_{2n}(\R))_{[\pi_m^*]}\right)_{[\chi_m]}=\left\{P_\Gamma c_{h,h'}:h\in \left(H_m^*\right)_{[\chi_m]},\ h'\in \left(H_m^*\right)_K\right\}.
	\end{equation}
\end{proposition}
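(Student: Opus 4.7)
The plan is to verify the two inclusions in \eqref{eq:024} separately, using Lem.\ \ref{lem:023} (which applies since $\pi_m^*$ is integrable by Lem.\ \ref{lem:058}) together with the $ 1 $-dimensionality of $ (H_m^*)_{[\chi_m]} $ recorded in \eqref{eq:084}. For $ \supseteq $, fix $ h\in(H_m^*)_{[\chi_m]} $ and $ h'\in(H_m^*)_K $. By Lem.\ \ref{lem:023}, $ P_\Gamma c_{h,h'}\in\bigl(L^2_{cusp}(\Gamma\backslash\Sp_{2n}(\R))_{[\pi_m^*]}\bigr)_K $, so the only issue is the right-$ K $-transformation law. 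A direct computation gives
\[ c_{h,h'}(gk)=\scal{\pi_m^*(g)\pi_m^*(k)h}{h'}_{H_m^*}=\chi_m(k)\,c_{h,h'}(g),\qquad g\in\Sp_{2n}(\R),\ k\in K, \]
and since $ P_\Gamma $ commutes with right translation, $ P_\Gamma c_{h,h'} $ also transforms as $ \chi_m $ on the right, giving the inclusion.

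For $ \subseteq $, let $ \varphi $ belong to the left-hand side of \eqref{eq:024}. Then $ \varphi $ is $ K $-finite, so Lem.\ \ref{lem:023} provides a finite decomposition $ \varphi=\sum_{i=1}^N P_\Gamma c_{h_i,h_i'} $ with $ h_i,h_i'\in(H_m^*)_K $. I would then apply the orthogonal projection $ p_{\chi_m}:H_m^*\to(H_m^*)_{[\chi_m]} $ to the first argument of each matrix coefficient; by the identity above, this corresponds to applying the right-$ \chi_m $-isotypic projection to $ P_\Gamma c_{h_i,h_i'} $, which commutes with $ P_\Gamma $ and fixes $ \varphi $, yielding $ \varphi=\sum_{i=1}^NP_\Gamma c_{p_{\chi_m}h_i,h_i'} $. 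Fixing a generator $ f_m^*\in(H_m^*)_{[\chi_m]}\setminus\{0\} $ (possible by \eqref{eq:084}) and writing $ p_{\chi_m}h_i=\lambda_i f_m^* $ for suitable $ \lambda_i\in\C $, sesquilinearity of $ \scal{\spacedcdot}{\spacedcdot}_{H_m^*} $ allows the scalars to be absorbed into the second argument:
\[ \sum_{i=1}^N\lambda_i\,c_{f_m^*,h_i'}=c_{f_m^*,\,h'}\qquad\text{with}\qquad h':=\sum_{i=1}^N\overline{\lambda_i}\,h_i'\in(H_m^*)_K, \]
so that $ \varphi=P_\Gamma c_{f_m^*,h'} $ is of the required form.

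The main obstacle is the bookkeeping in this last step: one must reduce an arbitrary finite linear combination $ \sum_iP_\Gamma c_{h_i,h_i'} $ to a \emph{single} Poincar\'e series whose first argument lies in the prescribed space $ (H_m^*)_{[\chi_m]} $. This relies essentially on $ \dim_\C(H_m^*)_{[\chi_m]}=1 $ (so that the projected first arguments are all proportional) and on sesquilinearity (to move the proportionality constants to the other side), without which the right-hand side of \eqref{eq:024} would need to be replaced by its $ \C $-linear span.
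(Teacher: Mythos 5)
Your proof is correct, and it reaches \eqref{eq:024} by a mildly different route than the paper. The paper argues globally: it combines the direct sum decomposition $\bigl(L^2_{cusp}(\Gamma\backslash\Sp_{2n}(\R))_{[\pi_m^*]}\bigr)_K=\bigoplus_{[\tau]\in\hat K}\bigl(L^2_{cusp}(\Gamma\backslash\Sp_{2n}(\R))_{[\pi_m^*]}\bigr)_{[\tau]}$ from \cite[Prop.\ 1.18(a)]{knapp_vogan} with the sum decomposition coming from Lem.\ \ref{lem:023} indexed by $[\tau]$, and uses the termwise containment \eqref{eq:027} to force termwise equality for \emph{every} $K$-type at once, then specializes to $\tau=\chi_m$. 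You instead prove the two inclusions directly, implementing the $\chi_m$-isotypic extraction by the explicit projection $\int_K\overline{\chi_m(k)}R(k)\,dk$ and transferring it to the first argument of the matrix coefficient; the interchange of this integral with the sum defining $P_\Gamma$ is justified by the absolute and uniform convergence on compacta noted at the start of Sect.\ \ref{sec:039}, though you could flag this. What your version buys is the avoidance of the abstract $K$-type decomposition (at the cost of proving only the single isotypic component actually needed), and, more usefully, an explicit treatment of the passage from the $\C$-linear \emph{span} of the Poincar\'e series (which is what Lem.\ \ref{lem:023} naturally produces) to the \emph{set} appearing on the right-hand side of \eqref{eq:024}: your final step using $\dim_\C(H_m^*)_{[\chi_m]}=1$ and sesquilinearity to absorb scalars into the second argument is exactly the point the paper leaves implicit in its concluding ``in particular''. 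Your closing remark that without these two facts the statement would only hold with the span is accurate.
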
 
 
\begin{proof}
	By \cite[Prop.\ 1.18(a)]{knapp_vogan}, we have a direct sum decomposition
	\begin{equation}\label{eq:025}
	\left(L^2_{cusp}(\Gamma\backslash\Sp_{2n}(\R))_{[\pi_m^*]}\right)_K=\bigoplus_{[\tau]\in\hat K}\left(L^2_{cusp}(\Gamma\backslash\Sp_{2n}(\R))_{[\pi_m^*]}\right)_{[\tau]}, 
	\end{equation}
	where $ \hat K $ denotes the unitary dual of $ K $. On the other hand, by Lem.\ \ref{lem:023}, we have
	\begin{equation}\label{eq:026}
	 \left(L^2_{cusp}(\Gamma\backslash\Sp_{2n}(\R))_{[\pi_m^*]}\right)_K=\sum_{[\tau]\in\widehat K}\mathrm{span}_\C\left\{P_\Gamma c_{h,h'}:h\in(H_m^*)_{[\tau]},h'\in(H_m^*)_K\right\}.  
	\end{equation}
	Since for every $ \tau\in\widehat K $ obviously
	\begin{equation}\label{eq:027}
	\mathrm{span}_\C\left\{P_\Gamma c_{h,h'}:h\in (H_m^*)_{[\tau]},h'\in (H_m^*)_K\right\}\subseteq \left(L^2_{cusp}(\Gamma\backslash\Sp_{2n}(\R))_{[\pi_m^*]}\right)_{[\tau]}, 
	\end{equation}
	\eqref{eq:025} and \eqref{eq:026} imply that in fact the equality holds in $ \eqref{eq:027} $. In particular, we have \eqref{eq:024}.
\end{proof} 

Suppose that $ m\in\Z_{>n} $.
 By \eqref{eq:028}, 
\begin{equation}\label{eq:090}
(H_m)_K=\left\{f_{\mu,m}:\mu\in\C[X_{r,s}:1\leq r,s\leq n]\right\}, 
\end{equation}
where the functions $ f_{\mu,m}:\calH_n\to\C $ are defined by
\begin{equation}\label{eq:041}
	f_{\mu,m}(z)=\left(p_\mu\big|_m\ell\right)(z)=(2i)^{mn}\,\frac{\mu((z-iI_n)(z+iI_n)^{-1})}{\det(z+iI_n)^m},\qquad z\in\calH_n. 
\end{equation}
Thus, writing $ f^*=\scal\spacedcdot{f}_{H_m}\in H_m^* $ for $ f\in H_m $, we have
\begin{equation}\label{eq:038}
(H_m^*)_K=\left\{f_{\mu,m}^*:\mu\in\C[X_{r,s}:1\leq r,s\leq n]\right\}
\end{equation}
and
\begin{equation}\label{eq:037}
(H_m^*)_{[\chi_m]}=\C f_{1,m}^*.
\end{equation}
Let us define a positive constant
\begin{equation}\label{eq:085}
	\begin{aligned}C_{m,n}&=\norm{F_{f_{1,m}}}_{L^2(\Sp_{2n}(\R))}^2\underset{\eqref{eq:013}}{\overset{\eqref{eq:059}}=}\norm{f_{1,m}}_{H_m}^2=\norm{p_1}_{D_m}^2\\&
	\underset{\eqref{eq:009}}{\overset{\eqref{eq:055}}=}2^{n(n+1)}\int_{\calD_n}\det(I_n-w^*w)^{m-n-1}\,\prod_{1\leq r\leq s\leq n}du_{r,s}dv_{r,s}\\
	&=2^{\frac{n(n+3)}2}\pi^{\frac{n(n+1)}2}\prod_{r=1}^n\frac{\Gamma\left(m-\frac{n+r}2\right)}{\Gamma\left(m-\frac{r-1}2\right)},
\end{aligned} 
\end{equation}
where the last equality holds by \cite[(2.3.1)]{hua63} (cf.\ \cite[III, \S6, Rem.\ after Prop.\ 1]{klingen}). In the last equality, $ \Gamma $ denotes the gamma function: $ \Gamma(s)=\int_0^\infty t^{s-1}e^{-t}\,dt $, $ \Re(s)>0 $.  
The second part of the following proposition gives an explicit formula for the matrix coefficients $ c_{h,h'} $ of Prop.\ \ref{prop:029}. 

\begin{proposition}\label{prop:057}
	Suppose that $ m\in\Z_{>n} $. Let $ f\in (H_m)_K $. Then:
	\begin{enumerate}[label=\textup{(\roman*)}]
		\item The matrix coefficient $ c_{f,f_{1,m}} $ of $ (\pi_m,H_m) $ is given by
		\begin{equation}\label{eq:048}
			c_{f,f_{1,m}}=C_{m,n}\,\check F_f,
		\end{equation}
		where we use the notation $ \check\varphi=\varphi(\spacedcdot^{-1}) $ for $ \varphi:\Sp_{2n}(\R)\to\C $.
		\item\label{prop:057:2} The matrix coefficient $ c_{f_{1,m}^*,f^*} $ of $ (\pi_m^*,H_m^*) $ is given by
		\begin{equation}\label{eq:032}
		c_{f_{1,m}^*,f^*}=C_{m,n}\,F_f. 
		\end{equation}
	\end{enumerate}
\end{proposition}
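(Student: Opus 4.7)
The plan is to derive both formulas from a single reproducing-type identity,
\begin{equation*}
\langle h, f_{1,m}\rangle_{H_m} = C_{m,n}\, h(iI_n), \qquad h\in (H_m)_K. \tag{$\star$}
\end{equation*}
Once $(\star)$ is in hand, part (i) is essentially immediate and part (ii) follows by a short calculation with the standard conjugate-linear identification $H_m\cong H_m^*$ together with the unitarity of $\pi_m$. This breaks the proof into one structural step (proving $(\star)$) and two short verifications.

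To prove $(\star)$, I would transfer to the disc model via the unitary equivalence $h\mapsto p:=h|_m\ell^{-1}$, under which $f_{1,m}$ corresponds to $p_1\equiv 1$ by \eqref{eq:041}. A direct cocycle computation shows $(h|_m\ell^{-1})(0)=h(iI_n)$, so $(\star)$ is equivalent to $\langle p,1\rangle_{D_m}=C_{m,n}\,p(0)$ for $p\in(D_m)_K$. By the decomposition \eqref{eq:028}, it suffices to check this on each $D_{m,d}$. For $d\geq 1$, the polynomial $p$ is homogeneous of positive degree, so $p(0)=0$; on the other hand $D_{m,0}=\C\cdot p_1$ is the full $\chi_{-m}$-isotypic subspace of $(\pi_m^\ell,D_m)$ (from the observation that $p_1$ has $K$-type $\chi_{-m}$, combined with the multiplicity-one statement \eqref{eq:084}), so $D_{m,d}$ contains no vector of $K$-type $\chi_{-m}$ for $d\geq 1$, and $\langle p,1\rangle_{D_m}=0$ by orthogonality of distinct $K$-isotypic subspaces. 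For $d=0$, the identity collapses to $\langle 1,1\rangle_{D_m}=\norm{p_1}_{D_m}^2=C_{m,n}$, which is the very definition of $C_{m,n}$ in \eqref{eq:085}.

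Passing from $(\star)$ to (i) and (ii) is then a matter of a few lines. For (i), using $\pi_m(g)f=f|_mg^{-1}\in(H_m)_K$,
\[c_{f,f_{1,m}}(g)=\langle\pi_m(g)f,f_{1,m}\rangle_{H_m}=C_{m,n}(f|_mg^{-1})(iI_n)=C_{m,n}F_f(g^{-1})=C_{m,n}\check F_f(g).\]
For (ii), unpacking $(\pi_m^*(g)\lambda)(v)=\lambda(\pi_m(g^{-1})v)$ at $\lambda=f_{1,m}^*=\langle\spacedcdot,f_{1,m}\rangle_{H_m}$ yields $\pi_m^*(g)f_{1,m}^*=(\pi_m(g)f_{1,m})^*$, and combining the isometric convention $\langle h^*,k^*\rangle_{H_m^*}=\langle k,h\rangle_{H_m}$ with the unitarity of $\pi_m(g)$ gives
\[c_{f_{1,m}^*,f^*}(g)=\langle f,\pi_m(g)f_{1,m}\rangle_{H_m}=\langle\pi_m(g^{-1})f,f_{1,m}\rangle_{H_m}=C_{m,n}F_f(g)\]
by $(\star)$ applied to $\pi_m(g^{-1})f=f|_mg$. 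The only non-routine point is the $K$-type separation used in $(\star)$; everything else reduces to Hua's integral \eqref{eq:085} and bookkeeping for the contragredient conventions in (ii).
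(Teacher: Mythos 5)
Your route is genuinely different from the paper's and, where it works, more elementary: the paper realizes $H_m$ inside the left regular representation, invokes Harish-Chandra's theorem \cite[Thm.\ 1]{harish1966} to write $\check F_f$ as a matrix coefficient $c_{f,h}$, pins down $h\in\C f_{1,m}$ from the left $K$-transformation behaviour and \eqref{eq:037}, and only then evaluates at the identity to obtain the constant. You instead reduce everything to the single identity $(\star)$, which you prove by hand in the disc model using the grading \eqref{eq:028}, the multiplicity-one statement \eqref{eq:084}, and Hua's integral \eqref{eq:085}. That part of your argument is correct (including the normalization $(h\big|_m\ell^{-1})(0)=h(iI_n)$, which holds because $j(\ell,iI_n)=1$), and it bypasses the convolution machinery entirely.

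There is, however, a gap in the passage from $(\star)$ to (i) and (ii). You justify applying $(\star)$ to $\pi_m(g)f$ by asserting that $\pi_m(g)f=f\big|_mg^{-1}\in(H_m)_K$; this is false for general $g\in\Sp_{2n}(\R)$, since $K$-finiteness is preserved by $K$ and by $\calU(\frakg)$ but not by the group action --- already $\pi_m^\ell(g)p_1=j(\ell g^{-1}\ell^{-1},\spacedcdot)^{-m}$ is a non-polynomial rational function of $w$ for generic $g$. What you actually need is $(\star)$ for \emph{all} $h\in H_m$, i.e.\ the statement that $C_{m,n}^{-1}f_{1,m}$ is the reproducing kernel of $H_m$ at $iI_n$. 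This does follow from your $K$-finite case by the density of $(H_m)_K$ in $H_m$, but only once you verify that the evaluation functional $h\mapsto h(iI_n)$ is continuous on $H_m$ --- a standard weighted-Bergman-space fact, obtained from the sub-mean-value inequality for $\abs h^2$ over a small ball around $iI_n$ on which $\det y^m$ is bounded below. Insert that step (or cite the reproducing-kernel property directly) and both (i) and (ii) go through exactly as you wrote them; the same caveat applies to your use of $(\star)$ at $\pi_m(g^{-1})f=f\big|_mg$ in (ii).
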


\begin{proof}
	The following proof is a generalization of \cite[proof of Lem.\ 3-5]{muic10}.
	One checks easily that the assignment $ f\mapsto F_f $ defines a unitary $ \Sp_{2n}(\R) $-equivalence $ \Psi_m $ from $ (\pi_m,H_m) $ onto a closed $ \Sp_{2n}(\R) $-invariant subspace $ L_m $ of the left regular representation $ (L,L^2(\Sp_{2n}(\R))) $. In particular, for a fixed $ f\in(H_m)_K\setminus\left\{0\right\} $, the function $ F_f\in(L_m)_K $ is $ \mathcal Z(\mathfrak g) $-finite and $ K $-finite on the left, and by \eqref{eq:013} it is also $ K $-finite on the right. Thus, by \cite[Thm.\ 1]{harish1966} there exists a function $ \alpha\in C_c^\infty(\Sp_{2n}(\R)) $ such that $ F_f=F_f*\check\alpha $. Denoting by $ \mathrm{pr}_{L_m}\alpha $ the orthogonal projection of $ \alpha $ onto $ L_m $, we have
	\[ \begin{aligned}
		\check F_f(g)&=(F_f*\check\alpha)(g^{-1})\\
		&=\int_{\Sp_{2n}(\R)}\alpha(h)\,F_f(g^{-1}h)\,dh\\
		&=\scal{L(g)F_f}{\alpha}_{L^2(\Sp_{2n}(\R))}\\
		&=\scal{L(g)F_f}{\mathrm{pr}_{L_m}\alpha}_{L_m},\qquad g\in\Sp_{2n}(\R).
	\end{aligned} \]
	Applying the inverse of $ \Psi_m $, it follows that
	\begin{equation}\label{eq:030}
		\check F_f=c_{f,h} 
	\end{equation}
	for some $ h\in H_m $. It is well-known that the assignment $ (h')^*\mapsto \check{c}_{(h')^*,f^*}= c_{f,h'} $ defines an $ \Sp_{2n}(\R) $-equivalence from $ H_m^* $ onto a closed $ \Sp_{2n}(\R) $-invariant subspace of $ (L,L^2(\Sp_{2n}(\R))) $ (see, e.g., \cite[proof of Prop.\ 1.3.3]{wallachI}). As $ \check F_f $ transforms on the left as $ \chi_m $ by \eqref{eq:013}, it follows from \eqref{eq:030} that $ h^*\in (H_m^*)_{[\chi_{m}]} $, hence $ h\in \C f_{1,m} $ by \eqref{eq:037}. Thus, there exists a function $ \lambda:(H_m)_K\to\C $ such that
	\begin{equation}\label{eq:031}
		c_{f,f_{1,m}}=\lambda(f)\check F_f,\qquad f\in(H_m)_K. 
	\end{equation}
	But the composition $ f\mapsto c_{f,f_{1,m}}=\check F_{\lambda(f)f}\mapsto\lambda(f)f $ is a linear operator on $ (H_m)_K $, hence the function $ \lambda $ is constant on $ (H_m)_K\setminus\left\{0\right\} $, thus we have
	\[ \lambda(f)=\frac{c_{f_{1,m},f_{1,m}}(1_{\Sp_{2n}(\R)})}{\check F_{f_{1,m}}(1_{\Sp_{2n}(\R)})}=\frac{\norm{f_{1,m}}^2_{H_m}}{1}\overset{\eqref{eq:085}}=C_{m,n},\qquad f\in(H_m)_K\setminus\left\{0\right\}. \]
	This proves \eqref{eq:048}, from which \eqref{eq:032} follows immediately.
\end{proof}

The proof of the following lemma is straightforward and left to the reader.

\begin{lemma}\label{lem:035}
	Let $ f:\calH_n\to\C $. Then, the Poincar\'e series
	\[ P_{\Gamma}f=\sum_{\gamma\in\Gamma}f\big|_m\gamma \]
	converges absolutely and uniformly on compact subsets of $ \calH_n $ (resp., almost everywhere on $ \calH_n $) if and only if the Poincar\'e series $ P_\Gamma F_f=\sum_{\gamma\in\Gamma}F_f(\gamma\spacedcdot) $ converges in the same way on $ \Sp_{2n}(\R) $. Moreover, assuming convergence, the following holds:
	\begin{enumerate}[label=\textup{(\roman*)}]
		\item\label{lem:035:1} $ F_{P_\Gamma f}=P_\Gamma F_f $.
		\item\label{lem:035:2} $ P_\Gamma f\equiv0 $ if and only if $ P_\Gamma F_f\equiv0 $.
	\end{enumerate}
\end{lemma}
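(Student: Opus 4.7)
The plan is simple: everything reduces to a single cocycle identity plus Iwasawa coordinates. First I would observe that by the automorphy property \eqref{eq:040} of $j$, for any $\gamma\in\Sp_{2n}(\R)$ and any $f:\calH_n\to\C$,
\[ F_{f|_m\gamma}(g) = \bigl(f\big|_m(\gamma g)\bigr)(iI_n) = F_f(\gamma g),\qquad g\in\Sp_{2n}(\R). \]
Combined with \eqref{eq:013} and the fact that $|\chi_m(k_u)|=1$, this gives the pointwise identity
\[ |F_f(\gamma g)| = \det y^{m/2}\,\bigl|(f\big|_m\gamma)(x+iy)\bigr|,\qquad g=n_xa_yk_u, \]
where $z=x+iy=g.(iI_n)$. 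Hence for each fixed $g$, the series $\sum_{\gamma\in\Gamma}|F_f(\gamma g)|$ and $\sum_{\gamma\in\Gamma}|(f|_m\gamma)(z)|$ differ only by a positive factor depending solely on $g$, so they converge or diverge together pointwise.

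To upgrade pointwise equivalence to uniform-on-compacts and almost-everywhere equivalence, I would use the fact that the Iwasawa map $\calH_n\times K\to\Sp_{2n}(\R)$, $(z,k)\mapsto n_xa_yk$, is a diffeomorphism. Compact subsets of $\calH_n$ correspond to compact subsets of $\Sp_{2n}(\R)$ of the form $C\times K$, while any compact subset of $\Sp_{2n}(\R)$ projects to a compact subset of $\calH_n$; on any such compact base, $\det y^{m/2}$ is bounded both above and away from $0$. This transfers uniform absolute convergence on compact sets in both directions. For almost-everywhere convergence, I would invoke the product decomposition of the Haar measure given in \eqref{eq:059} together with Fubini, using that $dk$ is a probability measure on $K$.

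Assuming convergence, part \ref{lem:035:1} is immediate from the same identity: summing $F_{f|_m\gamma}(g)=F_f(\gamma g)$ over $\gamma\in\Gamma$ yields $F_{P_\Gamma f}=P_\Gamma F_f$ by the (pointwise) linearity of the lift. Part \ref{lem:035:2} then follows from \ref{lem:035:1} together with the injectivity of $\Phi_m$, visible from the recovery formula $f(x+iy)=\det y^{-m/2}\,F_f(n_xa_y)$ applied to $P_\Gamma f$. I do not anticipate any obstacle: the whole argument is routine once the basic cocycle identity above is in hand, consistent with the author's remark that the proof is straightforward.
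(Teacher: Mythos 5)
Your argument is correct; the paper itself leaves this proof to the reader as ``straightforward,'' and what you have written is precisely the intended routine verification: the cocycle identity $F_{f|_m\gamma}(g)=F_f(\gamma g)$, the factor $\det y^{m/2}$ bounded above and away from $0$ on compacta, the Iwasawa/Fubini transfer of null sets via \eqref{eq:059}, and the injectivity of the lift for part \ref{lem:035:2}. No gaps.
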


The above results enable us to prove

\begin{theorem}\label{thm:069}
	Suppose that $ m\in\Z_{>2n} $. Let $ \Gamma $ be a congruence subgroup of $ \Sp_{2n}(\Z) $. For every $ \mu\in\C[X_{r,s}:1\leq r,s\leq n] $, the Poincar\'e series $ P_{\Gamma}f_{\mu,m} $ converges absolutely and uniformly on compact subsets of $ \calH_n $, and we have
	\begin{equation}\label{eq:036}
	S_m(\Gamma)=\left\{P_\Gamma f_{\mu,m}:\mu\in\C[X_{r,s}:1\leq r,s\leq n]\right\}. 
	\end{equation}
\end{theorem}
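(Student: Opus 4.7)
My plan is to assemble the theorem from the preparatory results of Sections \ref{sec:003}--\ref{sec:039}, using the lift $\Phi_m$ to translate between Poincar\'e series on $\calH_n$ and on $\Sp_{2n}(\R)$. The key bridge is Prop.\ \ref{prop:057}\ref{prop:057:2}, which identifies the lift $F_{f_{\mu,m}}$ with the matrix coefficient $C_{m,n}^{-1}\,c_{f_{1,m}^*,f_{\mu,m}^*}$ of $\pi_m^*$.

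First I would prove absolute and locally uniform convergence. Since $m>2n$, Lem.\ \ref{lem:058} says $\pi_m^*$ is integrable, so the $K$-finite matrix coefficient $c_{f_{1,m}^*,f_{\mu,m}^*}$ lies in $L^1(\Sp_{2n}(\R))$. By the discussion at the start of Sect.\ \ref{sec:039} (invoking the results of \cite{muic09,muic19}), the Poincar\'e series $P_\Gamma c_{f_{1,m}^*,f_{\mu,m}^*}$ converges absolutely and uniformly on compact subsets of $\Sp_{2n}(\R)$; via the identity $F_{f_{\mu,m}}=C_{m,n}^{-1}\,c_{f_{1,m}^*,f_{\mu,m}^*}$, the same holds for $P_\Gamma F_{f_{\mu,m}}$, and Lem.\ \ref{lem:035} then transports this convergence to $P_\Gamma f_{\mu,m}$ on $\calH_n$.

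Next I would show $P_\Gamma f_{\mu,m}\in S_m(\Gamma)$. By Lem.\ \ref{lem:035}\ref{lem:035:1}, $F_{P_\Gamma f_{\mu,m}}=P_\Gamma F_{f_{\mu,m}}=C_{m,n}^{-1}\,P_\Gamma c_{f_{1,m}^*,f_{\mu,m}^*}$. The discussion preceding Lem.\ \ref{lem:023} places this in $L^2_{cusp}(\Gamma\backslash\Sp_{2n}(\R))$. Using \eqref{eq:037} and \eqref{eq:038} to identify $f_{1,m}^*\in(H_m^*)_{[\chi_m]}$ and $f_{\mu,m}^*\in(H_m^*)_K$, Prop.\ \ref{prop:029} places it in $\bigl(L^2_{cusp}(\Gamma\backslash\Sp_{2n}(\R))_{[\pi_m^*]}\bigr)_{[\chi_m]}$. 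Prop.\ \ref{prop:081} identifies this latter space with $\mathcal A_{cusp}(\Gamma\backslash\Sp_{2n}(\R))_{[\chi_m]}^{\mathfrak p_\C^-}$, which by Prop.\ \ref{prop:034} equals $\Lambda_m(S_m(\Gamma))$. Since $\Lambda_m$ is an isomorphism, $P_\Gamma f_{\mu,m}\in S_m(\Gamma)$.

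Finally, for the spanning claim \eqref{eq:036}, I would read the chain of equalities in the previous paragraph in reverse. Given any $f\in S_m(\Gamma)$, its lift $\Lambda_m f$ lies in $\bigl(L^2_{cusp}(\Gamma\backslash\Sp_{2n}(\R))_{[\pi_m^*]}\bigr)_{[\chi_m]}$, so by Prop.\ \ref{prop:029} together with \eqref{eq:037} and \eqref{eq:038} it equals $P_\Gamma c_{\alpha f_{1,m}^*,f_{\nu,m}^*}=\alpha\,C_{m,n}\,P_\Gamma F_{f_{\nu,m}}$ for some $\alpha\in\C$ and some polynomial $\nu$; linearity in $\mu$ of $f_{\mu,m}$ lets us absorb $\alpha C_{m,n}$ into $\nu$, producing a single $\mu$ with $\Lambda_m f=P_\Gamma F_{f_{\mu,m}}=\Lambda_m(P_\Gamma f_{\mu,m})$, whence $f=P_\Gamma f_{\mu,m}$ by injectivity of $\Lambda_m$. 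The opposite inclusion was established in the previous paragraph.

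There is no serious obstacle here: the theorem is an orchestration of previously established structural results, with Prop.\ \ref{prop:057}\ref{prop:057:2} and Lem.\ \ref{lem:035} serving as the translation device between the classical and representation-theoretic pictures. The only point requiring a moment's thought is that the set $\{P_\Gamma c_{h,h'}:h\in(H_m^*)_{[\chi_m]},h'\in(H_m^*)_K\}$ in Prop.\ \ref{prop:029} is already a subspace, which is immediate from the one-dimensionality of $(H_m^*)_{[\chi_m]}$ and the linearity of $\mu\mapsto f_{\mu,m}$.
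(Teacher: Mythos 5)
Your proposal is correct and follows essentially the same route as the paper: convergence via integrability of $\pi_m^*$ and the identification $F_{f_{\mu,m}}=C_{m,n}^{-1}c_{f_{1,m}^*,f_{\mu,m}^*}$ from Prop.\ \ref{prop:057}\ref{prop:057:2}, and the spanning statement via the chain $\mathcal A_{cusp}(\Gamma\backslash\Sp_{2n}(\R))_{[\chi_m]}^{\mathfrak p_\C^-}=\bigl(L^2_{cusp}(\Gamma\backslash\Sp_{2n}(\R))_{[\pi_m^*]}\bigr)_{[\chi_m]}=\{P_\Gamma F_{f_{\mu,m}}\}$ pulled back through $\Lambda_m$ using Lem.\ \ref{lem:035}\ref{lem:035:1}. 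The paper writes this as a single chain of set equalities rather than two inclusions, but the content, including the absorption of scalars into $\mu$ by linearity, is the same.
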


\begin{proof}
	For every $ \mu $, the function $ F_{\mu,m}:=F_{f_{\mu,m}} $ is a $ K $-finite matrix coefficient of the integrable representation $ \pi_m^* $  by \eqref{eq:032}, so the series $ P_\Gamma F_{\mu,m} $ converges absolutely and uniformly on compact sets (see the beginning of Sect.\ \ref{sec:039}), and hence so does the series $ P_\Gamma f_{\mu,m} $  by Lem.\ \ref{lem:035}. Next, we have 
	\[ \begin{aligned}
	\mathcal A_{cusp}(\Gamma\backslash\mathrm{Sp}_{2n}(\R))_{[\chi_{m}]}^{\mathfrak p_\C^-}&\overset{\eqref{eq:033}}=\left(L^2_{cusp}(\Gamma\backslash\Sp_{2n}(\R))_{[\pi_m^*]}\right)_{[\chi_m]}\\
	&\overset{\eqref{eq:024}}=\left\{P_\Gamma c_{h,h'}:h\in \left(H_m^*\right)_{[\chi_m]},\ h'\in \left(H_m^*\right)_K\right\}\\
	&\underset{\eqref{eq:038}}{\overset{\eqref{eq:037}}=}\left\{P_\Gamma c_{f_{1,m}^*,f_{\mu,m}^*}:\mu\in\C[X_{r,s}:1\leq r,s\leq n]\right\}\\
	&\overset{\eqref{eq:032}}=\left\{P_\Gamma F_{f_{\mu,m}}:\mu\in\C[X_{r,s}:1\leq r,s\leq n]\right\}.
	\end{aligned} \]
	By applying the inverse of the unitary isomorphism $ \Lambda_m $ of Prop.\ \ref{prop:034} to both sides of this equality and using Lem.\ \ref{lem:035}\ref{lem:035:1}, we obtain \eqref{eq:036}.
\end{proof}

\section{Non-vanishing of Poincar\'e series}\label{sec:081}

In this section, we study the non-vanishing of Poincar\'e series $ P_\Gamma f_{\mu,m} $ of Thm.\ \ref{thm:069} using the following result, which is a special case of the strengthened version \cite[Thm.\ 1]{zunar20} of Mui\'c's non-vanishing criterion \cite[Lem.\ 2.1]{muic11}.

\begin{lemma}\label{lem:042}
	Let $ \Gamma $ be a discrete subgroup of $ \Sp_{2n}(\R) $, and let $ \Lambda $ be a finite subgroup of $ \Gamma $.
	Let $ \varphi\in L^1(\Lambda\backslash\Sp_{2n}(\R)) $. Suppose that there exists a Borel measurable subset $ C\subseteq\Sp_{2n}(\R) $ with the following properties:
	\begin{enumerate}[label=\textup{(C\arabic*)}]
		\item\label{enum:042:0} $ \Lambda C=C $.
		\item\label{enum:042:1} $ CC^{-1}\cap\Gamma\subseteq\Lambda $.
		\item\label{enum:042:2} $ \int_C\abs{\varphi(g)}\,dg>\int_{\Sp_{2n}(\R)\setminus C}\abs{\varphi(g)}\,dg $.
	\end{enumerate}
	Then, the Poincar\'e series
	\[ P_{\Lambda\backslash\Gamma}\varphi=\sum_{\gamma\in\Lambda\backslash\Gamma}\varphi(\gamma\spacedcdot) \]
	converges absolutely almost everywhere on $ \Sp_{2n}(\R) $, and $ P_{\Lambda\backslash\Gamma}\varphi\in L^1(\Gamma\backslash \Sp_{2n}(\R))\setminus\left\{0\right\} $.
\end{lemma}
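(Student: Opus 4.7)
Proof proposal: The plan is to dispatch convergence via a Tonelli unfolding, and then establish non-vanishing by pairing $P_{\Lambda\backslash\Gamma}\varphi$ against a carefully constructed bounded $\Gamma$-invariant ``sign'' function $\eta$ supported on the disjoint union of translates of $C$. Conditions \ref{enum:042:0} and \ref{enum:042:1} will produce the disjointness needed to define $\eta$, while \ref{enum:042:2} will force the pairing to be strictly positive in real part.

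For convergence I would choose a fundamental domain $\mathcal F_\Lambda = \bigsqcup_{\gamma \in [\Lambda\backslash\Gamma]} \gamma\mathcal F_\Gamma$ (possible because $\Lambda\subseteq\Gamma$ and $\Lambda$ is finite) to verify the unfolding identity
\[ \int_{\Gamma\backslash\Sp_{2n}(\R)} \sum_{\gamma \in \Lambda\backslash\Gamma} \abs{\varphi(\gamma g)}\,dg \;=\; \int_{\Lambda\backslash\Sp_{2n}(\R)} \abs{\varphi(g)}\,dg \;=\; \norm{\varphi}_{L^1(\Lambda\backslash\Sp_{2n}(\R))}<\infty. \]
Tonelli then guarantees that the inner sum converges absolutely for almost every $g$ and that $P_{\Lambda\backslash\Gamma}\varphi$ defines an element of $L^1(\Gamma\backslash\Sp_{2n}(\R))$, with $L^1$-norm dominated by $\norm{\varphi}_{L^1(\Lambda\backslash\Sp_{2n}(\R))}$.

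The geometric crux of the non-vanishing part is the assertion that the sets $\{\gamma^{-1}C : \Lambda\gamma\in\Lambda\backslash\Gamma\}$ are well defined and pairwise disjoint. Well-definedness comes from \ref{enum:042:0}: if $\gamma'=\lambda\gamma$ with $\lambda\in\Lambda$, then $(\gamma')^{-1}C = \gamma^{-1}\lambda^{-1}C = \gamma^{-1}C$. Pairwise disjointness comes from \ref{enum:042:1}: a common point in $\gamma_1^{-1}C\cap\gamma_2^{-1}C$ forces $\gamma_2\gamma_1^{-1}\in CC^{-1}\cap\Gamma\subseteq\Lambda$, so $\Lambda\gamma_1=\Lambda\gamma_2$. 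Hence $S:=\bigsqcup_{\gamma\in\Lambda\backslash\Gamma}\gamma^{-1}C$ is a genuine disjoint union and satisfies $\Gamma S = S$.

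I would then define $\eta\equiv 0$ off $S$ and, for $g$ in the unique piece $\gamma^{-1}C$ containing it, set $\eta(g)=\overline{\operatorname{sgn}\varphi(\gamma g)}$, where $\operatorname{sgn}z=z/\abs{z}$ for $z\ne 0$ and $0$ otherwise. The value $\varphi(\gamma g)$ depends only on the right coset $\Lambda\gamma$ by left $\Lambda$-invariance of $\varphi$, so $\eta$ is well defined; and a direct computation based on $\gamma_0\gamma^{-1}=(\gamma\gamma_0^{-1})^{-1}$ shows $\eta(\gamma_0 g)=\eta(g)$ for all $\gamma_0\in\Gamma$, so $\eta$ is bounded, Borel, and $\Gamma$-invariant. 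The standard unfolding identity then gives
\[ \int_{\Gamma\backslash\Sp_{2n}(\R)} \eta(g)\,P_{\Lambda\backslash\Gamma}\varphi(g)\,dg \;=\; \int_{\Lambda\backslash\Sp_{2n}(\R)} \eta(g)\,\varphi(g)\,dg. \]
Splitting the right-hand side across $C\sqcup(S\setminus C)$, on $C$ the trivial coset forces $\eta\varphi=\abs{\varphi}$, contributing (up to the positive constant $\abs{\Lambda}^{-1}$ from passing between $\Sp_{2n}(\R)$ and $\Lambda\backslash\Sp_{2n}(\R)$) a term proportional to $\int_C\abs{\varphi}\,dg$; the contribution from $S\setminus C\subseteq\Sp_{2n}(\R)\setminus C$ is bounded in absolute value by the analogous expression for $\int_{\Sp_{2n}(\R)\setminus C}\abs{\varphi}\,dg$ because $\abs\eta\le 1$. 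Taking real parts and invoking \ref{enum:042:2} yields
\[ \operatorname{Re}\int_{\Gamma\backslash\Sp_{2n}(\R)} \eta\cdot P_{\Lambda\backslash\Gamma}\varphi\,dg \;\geq\; \abs{\Lambda}^{-1}\Bigl(\int_C\abs{\varphi}\,dg - \int_{\Sp_{2n}(\R)\setminus C}\abs{\varphi}\,dg\Bigr) > 0, \]
which forces $P_{\Lambda\backslash\Gamma}\varphi\not\equiv 0$ in $L^1(\Gamma\backslash\Sp_{2n}(\R))$. The only genuinely delicate step is verifying the $\Gamma$-invariance of $\eta$, for which the disjointness of the pieces $\gamma^{-1}C$ is essential; everything else reduces to Fubini, Tonelli, and the reverse triangle inequality.
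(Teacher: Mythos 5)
Your argument is correct. Note, however, that the paper does not prove this lemma at all: it is quoted as a special case of the strengthened non-vanishing criterion in \cite[Thm.\ 1]{zunar20}, which in turn refines Mui\'c's original criterion \cite[Lem.\ 2.1]{muic11}, so there is no in-paper proof to compare against line by line. Your proof is essentially the standard argument behind that criterion. The convergence step via unfolding and Tonelli is exactly as expected, and your verification that \ref{enum:042:0} makes the translates $\gamma^{-1}C$ depend only on the coset $\Lambda\gamma$ while \ref{enum:042:1} makes them pairwise disjoint is the geometric heart of the matter. Where you differ cosmetically from Mui\'c's original presentation is in the non-vanishing step: he restricts $\abs{P_{\Lambda\backslash\Gamma}\varphi}$ to the image of $C$ in $\Gamma\backslash\Sp_{2n}(\R)$ and applies the reverse triangle inequality $\abs{\sum_\gamma\varphi(\gamma g)}\geq\abs{\varphi(g)}-\sum_{\Lambda\gamma\neq\Lambda}\abs{\varphi(\gamma g)}$, then unfolds the error term into $\int_{\Sp_{2n}(\R)\setminus C}\abs\varphi\,dg$ using the same disjointness; you instead pair $P_{\Lambda\backslash\Gamma}\varphi$ against the bounded $\Gamma$-invariant unimodular function $\eta$ built from $\overline{\operatorname{sgn}\varphi}$ on the disjoint translates. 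The two arguments are equivalent (your $\eta$ is precisely the function that realizes the reverse triangle inequality as a duality pairing), and both yield the same lower bound $\abs\Lambda^{-1}\bigl(\int_C\abs\varphi\,dg-\int_{\Sp_{2n}(\R)\setminus C}\abs\varphi\,dg\bigr)$ for $\norm{P_{\Lambda\backslash\Gamma}\varphi}_{L^1(\Gamma\backslash\Sp_{2n}(\R))}$; your version has the mild advantage of making the $\Gamma$-invariance of the test object explicit, at the cost of having to check that $\eta$ is well defined and measurable. I see no gap.
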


By the $ KAK $ decomposition \cite[Thm.\ 5.20]{knapp1986}, every $ g\in \Sp_{2n}(\R) $ has a factorization
\[ g=kh_tk' \]
with $ k,k'\in K $ and $ h_t=\mathrm{diag}(e^{t_1},\ldots,e^{t_n},e^{-t_1},\ldots,e^{-t_n}) $ for some $ t=(t_1,\ldots,t_n)\in\mathbb R^n $. This factorization is not unique: $ Kh_tK=Kh_{t'}K $ if and only if $ t $ and $ t' $ coincide up to permutations and sign changes of coordinates. For every $ t\in\R^n $, let $ d_t=\mathrm{diag}(t)\in M_n(\R) $.
In terms of the $ KAK $ decomposition, the matrix coefficients $ F_{\mu,m} $ are given by a simple formula:

\begin{lemma}
	Let $ m\in\Z_{>n} $ and $ \mu\in\C[X_{r,s}:1\leq r,s\leq n] $. For all $ u,u'\in\mathrm U(n) $ and $ t\in\R^n $, we have
	\begin{equation}\label{eq:045}
	F_{\mu,m}(k_uh_tk_{u'})=\det u^m\,\frac{\mu\left(u\,\tanh(d_t)\,u^\top\right)}{\det\cosh(d_t)^m}\,\det(u')^m.
	\end{equation}
\end{lemma}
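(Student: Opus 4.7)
The plan is to pull the computation back to the Cayley realization. By \eqref{eq:041}, $f_{\mu,m}=p_\mu\big|_m\ell$, and since the cocycle \eqref{eq:040} extends (by determinant-multiplicativity) to the pair $(\ell,g)$ with $g\in\Sp_{2n}(\R)$, one has
\[ F_{\mu,m}(g)=\bigl(f_{\mu,m}\big|_m g\bigr)(iI_n)=\bigl(p_\mu\big|_m\ell g\bigr)(iI_n)=j(\ell g,iI_n)^{-m}\,\mu\bigl(\ell g.(iI_n)\bigr). \]
Taking $g=k_uh_tk_{u'}$ and using $k_{u'}.(iI_n)=iI_n$, a further application of the cocycle gives
\[ \ell g.(iI_n)=\ell k_uh_t.(iI_n)\qquad\text{and}\qquad j(\ell g,iI_n)=j(\ell k_uh_t,iI_n)\,j(k_{u'},iI_n), \]
so the problem reduces to computing these three quantities.

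For the remaining pieces, I would multiply $2n\times 2n$ blocks directly. Writing $u=A+iB$ (so $\bar u=A-iB$), a short calculation yields
\[ \ell k_u h_t=\frac{1}{2i}\begin{pmatrix} ue^{d_t} & -iue^{-d_t}\\ \bar u e^{d_t} & i\bar u e^{-d_t}\end{pmatrix}. \]
Applying the GLFT formula at $iI_n$, the overall scalar $\tfrac{1}{2i}$ cancels and one obtains
\[ \ell k_uh_t.(iI_n)=\bigl(iue^{d_t}-iue^{-d_t}\bigr)\bigl(i\bar u e^{d_t}+i\bar u e^{-d_t}\bigr)^{-1}=u\sinh(d_t)\cosh(d_t)^{-1}\bar u^{-1}=u\,\tanh(d_t)\,u^\top, \]
using that $d_t$ is diagonal (so $\sinh(d_t)$ and $\cosh(d_t)^{-1}$ commute) and that $\bar u^{-1}=u^\top$ for $u\in\mathrm U(n)$. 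The automorphy factor is read off analogously:
\[ j(\ell k_u h_t,iI_n)=\det\!\Bigl(\tfrac{1}{2i}\bar u e^{d_t}\cdot iI_n+\tfrac12\bar u e^{-d_t}\Bigr)=\det\!\bigl(\bar u\cosh(d_t)\bigr)=\det\bar u\cdot\det\cosh(d_t), \]
and the same computation with $t=0$ and $u'$ in place of $u$ gives $j(k_{u'},iI_n)=\det(A'-iB')=\det\bar{u'}$.

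Substituting back produces
\[ F_{\mu,m}(k_uh_tk_{u'})=(\det\bar u)^{-m}\,\bigl(\det\cosh(d_t)\bigr)^{-m}\,(\det\bar{u'})^{-m}\,\mu\!\bigl(u\tanh(d_t)u^\top\bigr), \]
and since $|\det u|=|\det u'|=1$ we have $(\det\bar u)^{-m}=(\det u)^m$ and $(\det\bar{u'})^{-m}=(\det u')^m$, which yields exactly \eqref{eq:045}. The main obstacle is simply the matrix bookkeeping: keeping track of how the prefactor $\tfrac{1}{2i}$ in $\ell$ interacts with the factor $i$ coming from $iI_n$ in both the GLFT evaluation and the automorphy factor, and remembering the identity $\bar u^{-1}=u^\top$ for $u\in\mathrm U(n)$. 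No representation-theoretic input beyond the explicit form \eqref{eq:041} of $f_{\mu,m}$ is needed.
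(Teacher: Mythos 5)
Your proposal is correct and follows essentially the same route as the paper: unwind $F_{\mu,m}(g)$ to $\bigl(p_\mu\big|_m\ell g\bigr)(iI_n)$ via \eqref{eq:041} and then compute the action and the automorphy factor by explicit block-matrix manipulation, with the cocycle identity used to handle $k_{u'}$ and to split $j(\ell k_uh_tk_{u'},iI_n)$. The only cosmetic difference is that the paper evaluates $\ell k_uh_tk_{u'}.(iI_n)$ in one go (obtaining the same expression $u\sinh(d_t)\bigl(\bar u\cosh(d_t)\bigr)^{-1}=u\tanh(d_t)u^\top$ and $j(\ell k_uh_tk_{u'},iI_n)=\det\cosh(d_t)/\det(uu')$), whereas you first peel off $k_{u'}$; both computations agree.
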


\begin{proof}
	The proof is elementary: since $ F_{\mu,m}=F_{f_{\mu,m}} $, by \eqref{eq:013} and \eqref{eq:041} we have
	\[ F_{\mu,m}(k_uh_tk_{u'})=\left(p_\mu\big|_m\ell k_uh_tk_{u'}\right)(iI_n)\underset{\eqref{eq:086}}{\overset{\eqref{eq:074}}=}\frac{\mu(\ell k_uh_tk_{u'}.(iI_n))}{j(\ell k_uh_tk_{u'},iI_n)^m}. \]
	Writing $ u=A+iB $ and $ u'=A'+iB' $ with $ A,B,A',B'\in M_n(\R) $, $ \ell k_uh_tk_{u'}.(iI_n) $ equals
	\[ \begin{aligned}
		&\big(iA\exp(2d_t)+B-i\left(-iB\exp(2d_t)+A\right)\big)\big(iA\exp(2d_t)+B+i\left(-iB\exp(2d_t)+A\right)\big)^{-1}\\
		&=\big((A+iB)\sinh(d_t)\cdot 2i\exp(d_t)\big)\big((A-iB)\cosh(d_t)\cdot 2i\exp(d_t)\big)^{-1}\\
		&=u\,\tanh(d_t)\,u^\top,
	\end{aligned} \]
	and one shows similarly, using \eqref{eq:040}, that $ j(\ell k_uh_tk_{u'},iI_n)=\frac{\det\cosh(d_t)}{\det(uu')} $. The claim follows.
\end{proof}

We will apply Lem.\ \ref{lem:042} to the series $ P_\Gamma F_{\mu,m} $ using the set $ C $ of the form
\begin{equation}\label{eq:087}
 C_R=K\left\{h_t:t\in\left[0,R\right[^n\right\}K 
\end{equation}
for a suitable $ R\in\R_{>0} $. For a complex square matrix $ g=(g_{r,s}) $, let $ \norm g=\sqrt{\sum_{r,s}\abs{g_{r,s}}^2} $. The following lemma is a generalization of \cite[Lem.\ 6-20]{muic10}.
  
\begin{lemma}\label{lem:044}
	Let $ R\in\R_{>0} $ and $ g\in C_RC_R^{-1} $. Then,
	\[ \norm g<\sqrt{2n\cosh(4R)}. \]
\end{lemma}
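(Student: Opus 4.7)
\textbf{Proof plan for Lemma \ref{lem:044}.} The plan is to reduce to a pure block-matrix calculation by exploiting the bi-invariance of the Frobenius norm $\|\cdot\|$ under the orthogonal group. First, I would unpack $C_RC_R^{-1}$: since $K=K^{-1}$ and $(h_s)^{-1}=h_{-s}$, every $g\in C_RC_R^{-1}$ can be written as
\[
g=k_1h_tk_2h_{-s}k_3,\qquad k_1,k_2,k_3\in K,\ t,s\in[0,R[^{n}.
\]
Because $K\subseteq \Sp_{2n}(\R)\cap\mathrm{U}(2n)$ consists of real orthogonal $2n\times2n$ matrices, left and right multiplication by elements of $K$ preserves the Frobenius norm, so $\|g\|=\|h_tk_2h_{-s}\|$.

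Next I would write $k_2=\begin{pmatrix}A&B\\-B&A\end{pmatrix}$ with $A+iB\in\mathrm{U}(n)$, and compute in the $n\times n$ block form
\[
h_tk_2h_{-s}=\begin{pmatrix} e^{d_t}Ae^{-d_s} & e^{d_t}Be^{d_s}\\ -e^{-d_t}Be^{-d_s} & e^{-d_t}Ae^{d_s}\end{pmatrix}.
\]
Taking the sum of the Frobenius norm squared of the four blocks and pairing the two ``$A$-blocks'' against each other and likewise the two ``$B$-blocks'' yields
\[
\|h_tk_2h_{-s}\|^{2}=2\sum_{r,s'=1}^{n}\Bigl[\cosh(2t_r-2s_{s'})\,|A_{r,s'}|^{2}+\cosh(2t_r+2s_{s'})\,|B_{r,s'}|^{2}\Bigr],
\]
using $e^{2x}+e^{-2x}=2\cosh(2x)$.

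Since $t_r,s_{s'}\in[0,R[$, the arguments $2t_r\pm 2s_{s'}$ lie in $(-2R,4R)$, so both cosh factors are strictly bounded by $\cosh(4R)$. Finally, the unitarity of $A+iB$ as an $n\times n$ matrix gives
\[
\|A\|^{2}+\|B\|^{2}=\|A+iB\|^{2}=n,
\]
whence $\|g\|^{2}<2\cosh(4R)\bigl(\|A\|^{2}+\|B\|^{2}\bigr)=2n\cosh(4R)$, which is the desired inequality. The computation is elementary throughout; the only mild care point is keeping track of signs in the block expansion of $h_tk_2h_{-s}$ so that the cross terms collapse correctly into $\cosh$'s, and noting that the bound on $\cosh(2t_r\pm 2s_{s'})$ is strict because $[0,R[$ is half-open.
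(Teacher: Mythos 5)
Your proposal is correct and follows essentially the same route as the paper: reduce to $\|h_tk_2h_{-s}\|$ by orthogonal bi-invariance of the Frobenius norm, expand the four blocks, pair terms into $\cosh$'s, bound each $\cosh$ strictly by $\cosh(4R)$, and use $\|A\|^2+\|B\|^2=\|u\|^2=n$. No gaps.
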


\begin{proof}
	By \eqref{eq:087}, $ g=kh_tk_uh_{-t'}k' $ for some $ k,k'\in K $, $ t,t'\in[0,R[^n $, and $ u=A+iB\in\mathrm U(n) $. Since $ k,k'\in\mathrm O(2n) $, we have
	\begin{align*}
		\norm g^2&=\norm{h_tk_uh_{-t'}}=\norm{\begin{pmatrix}\exp(d_t)A\exp(-d_{t'})&\exp(d_t)B\exp(d_{t'})\\-\exp(-d_t)B\exp(-d_{t'})&\exp(-d_t)A\exp(d_{t'})\end{pmatrix}}^2\\
		&=\sum_{r,s}\left(\left(e^{2(t_r-t_s')}+e^{-2(t_r-t_s')}\right)a_{r,s}^2+\left(e^{2(t_r+t_s')}+e^{-2(t_r+t_s')}\right)b_{r,s}^2\right)\\
		&=2\sum_{r,s}\Big(a_{r,s}^2\cosh2(t_r-t_s')+b_{r,s}^2\cosh2(t_r+t_s')\Big)\\
		&<2\cosh(4R)\sum_{r,s}\left(a_{r,s}^2+b_{r,s}^2\right)=2\cosh(4R)\norm{u}^2=2n\cosh(4R).\qedhere
	\end{align*}
\end{proof}

On the other hand, we have the following

\begin{lemma}\label{lem:045}
	Let $ N\in\Z_{>0} $. For every $ \gamma\in\Gamma_n(N)\setminus K $,
	\[ \norm\gamma\geq\sqrt{N^2+2n}. \]
\end{lemma}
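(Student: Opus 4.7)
The plan is to analyze $S := \gamma^\top \gamma$. Since $\gamma \in \Sp_{2n}(\Z)$ with $\gamma \equiv I_{2n} \pmod N$, the matrix $S$ is integer, symmetric, positive-definite, satisfies $\det S = 1$, and obeys $S \equiv I_{2n} \pmod N$. Writing $S = I_{2n} + N T$ with $T \in M_{2n}(\Z)$ symmetric, and noting that $K = \Sp_{2n}(\R) \cap \mathrm O(2n)$ is exactly the set of $g$ with $g^\top g = I_{2n}$, the hypothesis $\gamma \notin K$ is equivalent to $T \neq 0$. Since $\|\gamma\|^2 = \tr S = 2n + N \tr T$, the proposition reduces to the purely arithmetic claim $\tr T \geq N$.

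The key observation is that two independent facts about the nonzero integer symmetric matrix $T$, together with $\det(I_{2n} + NT) = 1$ and $I_{2n} + NT > 0$, already force this. First, expanding
\[ 1 = \det(I_{2n} + NT) = 1 + \sum_{k=1}^{2n} N^k e_k(T), \]
where $e_k(T) \in \Z$ denotes the sum of principal $k \times k$ minors of $T$, and dividing by $N$ gives
\[ \tr T = -N \sum_{k=2}^{2n} N^{k-2} e_k(T) \in N \Z. \]
Second, letting $\mu_1, \ldots, \mu_{2n} \in \R$ be the eigenvalues of $T$, positive-definiteness of $S$ forces $1 + N \mu_i > 0$, and $\prod_i (1 + N \mu_i) = \det S = 1$; the AM--GM inequality then yields $\sum_i (1 + N \mu_i) \geq 2n$, with equality iff all $\mu_i$ vanish, i.e.\ iff $T = 0$. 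The strict inequality for $T \neq 0$ gives $\tr T > 0$, hence $\tr T \geq 1$. Combining the two facts, $\tr T$ is a positive multiple of $N$, so $\tr T \geq N$ and $\|\gamma\|^2 \geq 2n + N^2$.

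No serious obstacle is anticipated; the argument is short, and the bound is sharp, as exhibited by the matrix $\gamma$ obtained from $I_{2n}$ by replacing its $(1, n+1)$-entry with $N$, which lies in $\Gamma_n(N) \setminus K$ with $\|\gamma\|^2 = 2n + N^2$ (equivalently, the corresponding $T$ satisfies $\tr T = N$).
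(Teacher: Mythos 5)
Your argument is correct, and it is genuinely different from the one in the paper. The paper argues directly on the entries of $\gamma$: an element of $\Gamma_n(N)\cap K=\Gamma_n(N)\cap\mathrm O(2n)$ is a signed permutation matrix, so any $\gamma\in\Gamma_n(N)\setminus K$ has at least $2n+1$ non-zero integer entries; at least one of these sits off the diagonal (there being only $2n$ diagonal slots) and is therefore divisible by $N$, whence $\norm\gamma^2\geq N^2+2n$. You instead pass to the Gram matrix $S=\gamma^\top\gamma=I_{2n}+NT$ and split the claim $\tr T\geq N$ into a divisibility statement (from expanding $\det(I_{2n}+NT)=1$ in the elementary symmetric functions of $T$) and a positivity statement (from AM--GM applied to the eigenvalues of $S$, with the equality case characterizing $T=0$, i.e.\ $\gamma\in K$). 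Both proofs are complete and yield the same sharp constant; the paper's is shorter and more elementary, while yours isolates cleanly \emph{why} the bound holds --- the congruence condition forces $\tr(\gamma^\top\gamma)-2n$ into $N\Z$, and unimodularity plus positivity forces it to be positive --- and it never needs the classification of integral orthogonal matrices as signed permutations. Your closing remark on sharpness (the unipotent $\begin{pmatrix}I_n&NE_{1,1}\\&I_n\end{pmatrix}$) is a nice addition not present in the paper.
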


\begin{proof}
	The group $ \Gamma_n(N)\cap K=\Gamma_n(N)\cap\mathrm O(2n) $ obviously consists of those elements $ \gamma\in\Gamma_n(N) $ that have exactly one non-zero entry in each row (resp., column). Thus, if $ \gamma\in\Gamma_n(N)\setminus K $, then $ \gamma $ has at least $ 2n+1 $ non-zero (integer) entries. As at least one of those entries is off-diagonal and hence of absolute value at least $ N $, it follows that $ \norm\gamma\geq\sqrt{N^2+2n} $. 
\end{proof}

Let $ N\in\Z_{>0} $, and let $ \Gamma\subseteq\Gamma_n(N) $ be a congruence subgroup. By Lemmas \ref{lem:044} and \ref{lem:045}, the set $ C_R $ satisfies the condition \ref{enum:042:1} of Lem.\ \ref{lem:042} with $ \Lambda=\Gamma\cap K $ if the inequality
\[ \sqrt{N^2+2n}\geq\sqrt{2n\cosh(4R)} \]
or equivalently
\begin{equation}\label{eq:046}
\tanh^2R\leq\left(\sqrt{1+\frac{4n}{N^2}}+\sqrt{\frac{4n}{N^2}}\right)^{-2}=:M(N)
\end{equation}
holds.

Next, by \cite[Prop.\ 5.28]{knapp1986} there exists $ M_0\in\R_{>0} $ such that for every $ f\in C_c(\Sp_{2n}(\R)) $,
\[ \int_{\Sp_{2n}(\R)}f(g)\,dg=M_0\int_K\int_{A^+}\int_K f(kh_tk')\hspace{-1mm}\left(\prod_{1\leq r<s\leq n}\hspace{-3mm}\sinh(t_r-t_s)\right)\hspace{-1mm}\left(\prod_{1\leq r\leq s\leq n}\hspace{-3mm}\sinh(t_r+t_s)\right)dk\,dt\,dk', \]
where $ A^+=\left\{t=(t_1,\ldots,t_n)\in\R^n:t_1>\ldots>t_n>0\right\} $. From this formula and \eqref{eq:045}, after introducing the substitution $ x=\left(\tanh^2t_1,\ldots,\tanh^2t_n\right) $ it follows that for every $ R\in\R_{>0} $,
\begin{equation}\label{eq:047}
\int_{C_R}\abs{F_{\mu,m}(g)}\,dg=M_0\int_{A_{\tanh^2R}^+}\int_{\mathrm{U(n)}}\varphi_{\mu,m}(u,x)\,\mathrm du\,\mathrm dx, 
\end{equation}
where we use the notation $ A_t^+=\left\{x\in\R^n:t>x_1>\ldots>x_n>0\right\} $ for $ t\in\R_{>0} $, 
$ du $ is the unique Haar measure on $ \mathrm U(n) $ with respect to which $ \vol(\mathrm U(n))=1 $, and the function $ \varphi_{\mu,m}:\mathrm U(n)\times A_1^+\to\R_{\geq0} $ is given by
\[ \varphi_{\mu,m}(u,x)=\abs{\mu\left(u\,d_x^{\frac12}\,u^\top\right)}\,\left(\prod_{1\leq r<s\leq n}(x_r-x_s)\right)\prod_{r=1}^n(1-x_r)^{\frac m2-n-1}. \]

The function $ M:\Z_{>0}\to[0,1] $ defined by \eqref{eq:046} is non-decreasing, and $ \lim_{N\to\infty}M(N)=1 $.
Thus, by applying Lem.\ \ref{lem:042} to the Poincar\'e series $ P_\Gamma F_{\mu,m} $ with $ C=C_{\artanh\sqrt{M(N)}} $, by \eqref{eq:046}, \eqref{eq:047} and Lem.\ \ref{lem:035}\ref{lem:035:2} we obtain the following

\begin{proposition}\label{prop:043}
	Let $ m\in\Z_{>2n} $ and $ \mu\in\C[X_{r,s}:1\leq r,s\leq n]\setminus\left\{0\right\} $.
	Let $ N_0=N_0(\mu,m) $ denote the smallest integer $ N\in\Z_{>0} $ such that
	\begin{equation}\label{eq:049}
	\int_{A_{M(N)}^+}\int_{\mathrm U(n)}\varphi_{\mu,m}(u,x)\,du\,dx>\frac12\int_{A_1^+}\int_{\mathrm U(n)}\varphi_{\mu,m}(u,x)\,du\,dx.
	\end{equation}
	Then, for every  $ N\in\Z_{\geq N_0} $ and for every congruence subgroup $ \Gamma\subseteq\Gamma_n(N) $ such that $ \Gamma\cap K=\left\{1\right\} $, we have that $ P_\Gamma F_{\mu,m}\not\equiv0 $ and $ P_\Gamma f_{\mu,m}\not\equiv0 $.
\end{proposition}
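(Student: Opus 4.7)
The plan is to apply Lemma \ref{lem:042} to $\varphi = F_{\mu,m}$ with $\Lambda = \Gamma \cap K = \{1\}$ and $C = C_R$ (as in \eqref{eq:087}) for the specific choice $R := \artanh\sqrt{M(N)}$, and then transfer the resulting non-vanishing of $P_\Gamma F_{\mu,m}$ to $P_\Gamma f_{\mu,m}$ via Lemma \ref{lem:035}\ref{lem:035:2}. The $L^1$ hypothesis of Lemma \ref{lem:042} is immediate: by \eqref{eq:032}, $F_{\mu,m}$ is (up to a nonzero constant) a $K$-finite matrix coefficient of the integrable representation $\pi_m^*$ (Lemma \ref{lem:058}, using $m > 2n$). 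Condition \ref{enum:042:0} is trivial since $\Lambda = \{1\}$. Thus everything reduces to verifying \ref{enum:042:1} and \ref{enum:042:2}.

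The verification of \ref{enum:042:1} is the substantive step. For $g \in C_R C_R^{-1}$, Lemma \ref{lem:044} gives the strict bound $\|g\| < \sqrt{2n\cosh(4R)}$. Writing $\cosh(4R)$ as a rational function of $t = \tanh^2 R$ and plugging in $t = M(N)$, one verifies that the author's specific formula \eqref{eq:046} is designed precisely so that $2n\cosh(4R) = N^2 + 2n$. The crux is the identity
\[ \left(\sqrt{1 + 4n/N^2} + \sqrt{4n/N^2}\right)\left(\sqrt{1 + 4n/N^2} - \sqrt{4n/N^2}\right) = 1, \]
which realizes $M(N)$ as the smaller root of the quadratic equating the two expressions for $\cosh(4R)$. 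Consequently $\|g\| < \sqrt{N^2 + 2n}$ strictly, so if additionally $g \in \Gamma \subseteq \Gamma_n(N)$, Lemma \ref{lem:045} forces $g \in K$, whence $g \in \Gamma \cap K = \{1\} = \Lambda$.

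For \ref{enum:042:2}, formula \eqref{eq:047} with $R = \artanh\sqrt{M(N)}$ identifies $\int_{C_R}|F_{\mu,m}(g)|\,dg$ (up to the positive constant $M_0$) with the left-hand side of \eqref{eq:049}, and letting $R \to \infty$ (monotone convergence, since $\varphi_{\mu,m} \geq 0$ and $M(N) \to 1$) does the same for the whole-group integral $\int_{\Sp_{2n}(\R)}|F_{\mu,m}(g)|\,dg$ and the right-hand side of \eqref{eq:049}. Since $M$ is non-decreasing and $\varphi_{\mu,m}$ is non-negative, the defining strict inequality of $N_0(\mu,m)$ persists for every $N \geq N_0(\mu,m)$, and this is exactly \ref{enum:042:2}. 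Lemma \ref{lem:042} then yields $P_\Gamma F_{\mu,m} \not\equiv 0$, and Lemma \ref{lem:035}\ref{lem:035:2} completes the argument by giving $P_\Gamma f_{\mu,m} \not\equiv 0$. The main (minor) obstacle is the algebraic identification in \ref{enum:042:1} pinning down why $M(N)$ is precisely the sharp value at which the bounds of Lemmas \ref{lem:044} and \ref{lem:045} match; once that is in place, the rest is routine bookkeeping.
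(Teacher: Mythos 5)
Your proposal is correct and follows essentially the same route as the paper, which proves this proposition precisely by the discussion preceding it: Lemma \ref{lem:042} applied to $P_\Gamma F_{\mu,m}$ with $\Lambda=\Gamma\cap K=\{1\}$ and $C=C_{\artanh\sqrt{M(N)}}$, with \ref{enum:042:1} supplied by Lemmas \ref{lem:044} and \ref{lem:045} via \eqref{eq:046}, \ref{enum:042:2} by \eqref{eq:047}, and the transfer to $f_{\mu,m}$ by Lemma \ref{lem:035}\ref{lem:035:2}. Your algebraic verification that $\tanh^2R=M(N)$ corresponds exactly to $2n\cosh(4R)=N^2+2n$ is the intended content of \eqref{eq:046} and is carried out correctly.
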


We note that since $ \Gamma_n(N)\cap K=\left\{1\right\} $ for all $ N\in\Z_{\geq3} $, the condition $ \Gamma\cap K=\left\{1\right\} $ in Prop.\ \ref{prop:043} is relevant only if $ N\in\left\{1,2\right\} $. In particular:

\begin{corollary}\label{cor:081}
	Let $ m\in\Z_{>2n} $ and $ \mu\in\C[X_{r,s}:1\leq r,s\leq n]\setminus\left\{0\right\} $. Then,
	\[ P_{\Gamma_n(N)}F_{\mu,m}\not\equiv0\quad\text{and}\quad P_{\Gamma_n(N)}f_{\mu,m}\not\equiv0\quad\text{for}\quad N\gg0. \] 
\end{corollary}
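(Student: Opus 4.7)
The plan is to deduce the corollary directly from Prop.\ \ref{prop:043} by verifying its two hypotheses for $\Gamma=\Gamma_n(N)$ when $N$ is sufficiently large. The hypothesis $\Gamma\cap K=\{1\}$ is guaranteed for all $N\geq 3$ by the remark preceding the corollary: any $\gamma\in\Gamma_n(N)\cap K$ is, as in the proof of Lem.\ \ref{lem:045}, a signed permutation matrix whose non-zero entries are $\equiv\pm 1\pmod N$, which forces $\gamma=I_{2n}$ once $N\geq 3$. The real content is therefore to verify that $N_0(\mu,m)$ is a well-defined positive integer, i.e.\ that the inequality \eqref{eq:049} holds for $N$ sufficiently large.

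For this, the idea is to invoke the monotone convergence theorem. The sets $A_{M(N)}^+$ are increasingly nested in $N$ (because $M$ is non-decreasing) and exhaust $A_1^+$ (because $M(N)\to 1$). Since $\varphi_{\mu,m}$ is non-negative, monotone convergence gives
\[ \lim_{N\to\infty}\int_{A_{M(N)}^+}\int_{\mathrm U(n)}\varphi_{\mu,m}(u,x)\,du\,dx=\int_{A_1^+}\int_{\mathrm U(n)}\varphi_{\mu,m}(u,x)\,du\,dx, \]
so it suffices to show that the right-hand integral is a strictly positive finite real number; then half of it will eventually be exceeded by the truncated integrals, giving $N_0(\mu,m)<\infty$.

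Finiteness is immediate: the only factor of $\varphi_{\mu,m}$ that could be singular is $\prod_r(1-x_r)^{m/2-n-1}$, and the assumption $m>2n$ forces $m/2-n-1>-1$, so this factor is integrable on the bounded set $A_1^+$; the $\mathrm U(n)$-integral is then finite by compactness and continuity. Positivity requires slightly more thought: by the Autonne--Takagi factorization, every complex symmetric $n\times n$ matrix $A$ can be written as $A=u\,\Sigma\,u^\top$ with $u\in\mathrm U(n)$ and $\Sigma$ a non-negative real diagonal matrix whose entries are the singular values of $A$. Thus the image of the map $(u,x)\mapsto u\,d_x^{1/2}\,u^\top$ on $\mathrm U(n)\times A_1^+$ contains every complex symmetric matrix with pairwise distinct singular values in $(0,1)$, which is a non-empty open subset of the space of complex symmetric matrices. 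Since $\mu$ defines a non-zero polynomial function on that space (which is the only regime in which the corollary carries content, as otherwise $f_{\mu,m}\equiv 0$ and $F_{\mu,m}\equiv 0$ by \eqref{eq:045}), it cannot vanish identically on this open set, so $\varphi_{\mu,m}$ is strictly positive on a non-empty open subset of $\mathrm U(n)\times A_1^+$ and the double integral is positive.

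Combining the two steps, for every $N\geq\max\{3,N_0(\mu,m)\}$ Prop.\ \ref{prop:043} applies with $\Gamma=\Gamma_n(N)$, yielding both non-vanishing statements. I expect no genuine obstacle; the only mildly delicate point is the positivity argument, which the Autonne--Takagi factorization handles cleanly.
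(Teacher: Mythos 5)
Your proof is correct and follows essentially the same route as the paper: Cor.~\ref{cor:081} is obtained there simply by combining Prop.~\ref{prop:043} with the observation that $\Gamma_n(N)\cap K=\{1\}$ for $N\geq 3$, the finiteness of $N_0(\mu,m)$ being implicit in the remark that $M$ is non-decreasing with $\lim_{N\to\infty}M(N)=1$. Your explicit verification that the limiting integral is finite and strictly positive (via monotone convergence and the Takagi factorization, together with the correct caveat that $\mu$ must be non-zero as a function on symmetric matrices) fills in details the paper leaves tacit, but it is not a different argument.
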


In the case when $ \mu=\det^l $ for some $ l\in\Z_{\geq0} $, the function $ \varphi_{\mu,m} $ is given by
\begin{equation}\label{eq:053}
\varphi_{\det^l,m}(u,x)=\left(\prod_{r=1}^nx_r^{\frac l2}(1-x_r)^{\frac m2-n-1}\right)\prod_{1\leq r<s\leq n}(x_r-x_s)=:\phi_{l,m}(x),
\end{equation}
and
$ N_0(\det^l,m) $ of Prop.\ \ref{prop:043} can be defined as the smallest integer $ N\in\Z_{>0} $ such that
\begin{equation}\label{eq:054}
\int_{A_{M(N)}^+}\phi_{l,m}(x)\,dx>\frac12\int_{A_1^+}\phi_{l,m}(x)\,dx.
\end{equation}
Moreover, it follows from \eqref{eq:045} that
\begin{equation}\label{eq:051}
F_{\det^l,m}(kg)=\chi_{m+2l}(k)F_{\det^l,m}(g),\qquad k\in K,\ g\in\Sp_{2n}(\R). 
\end{equation}
Armed with this, we prove the following

\begin{proposition}\label{prop:082}
	Let $ m\in\Z_{>2n} $, $ l\in\Z_{\geq0} $, and $ N\in\Z_{>0} $. Then:
	\begin{enumerate}[label=\textup{(\roman*)}]
		\item\label{prop:054:1} $ P_{\Gamma_n(N)}F_{\det^l,m}\equiv0 $ and $ P_{\Gamma_n(N)}f_{\det^l,m}\equiv0 $ if one of the following holds:
		\begin{enumerate}[label=\textup{(V\arabic*)}]
			\item\label{enum:050:1} $ N=1 $ and $ 4\nmid m+2l $
			\item\label{enum:050:2} $ N=2 $ and $ 2\nmid m $.
		\end{enumerate}
		\item\label{prop:054:2} Suppose that neither \ref{enum:050:1} nor \ref{enum:050:2} holds, and define $ N_0(\det^l,m) $ as above. If $ N\geq N_0(\det^l,m) $, then $ P_{\Gamma_n(N)}F_{\det^l,m}\not\equiv0 $ and $ P_{\Gamma_n(N)}f_{\det^l,m}\not\equiv0 $.
	\end{enumerate}
\end{proposition}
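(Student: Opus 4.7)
My plan is to treat both parts of Prop.\ \ref{prop:082} uniformly through the left transformation law \eqref{eq:051}. Write $\Lambda := \Gamma_n(N)\cap K$. By the absolute convergence of $P_{\Gamma_n(N)}F_{\det^l,m}$ (valid since $F_{\det^l,m}\in L^1(\Sp_{2n}(\R))$ as a $K$-finite matrix coefficient of the integrable representation $\pi_m^*$), one can reorganise the Poincar\'e sum as
\[ P_{\Gamma_n(N)}F_{\det^l,m}(g) = \left(\sum_{\lambda\in\Lambda}\chi_{m+2l}(\lambda)\right)\sum_{\gamma\in\Lambda\backslash\Gamma_n(N)} F_{\det^l,m}(\gamma g). \]
Thus the proposition reduces to a character-sum computation plus an application of Lem.\ \ref{lem:042} to the reduced sum.

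The first step is to identify $\Lambda$ concretely. The proof of Lem.\ \ref{lem:045} shows that $\Sp_{2n}(\Z)\cap K$ consists of the elements $k_u$ where $u$ is a ``Gaussian-integer signed permutation'' matrix, i.e.\ has one nonzero entry per row and column taken from $\{\pm 1,\pm i\}$. For $N=2$, the congruence $k_u\equiv I_{2n}\pmod 2$ further forces $u$ to be real diagonal with entries in $\{\pm 1\}$; for $N\geq 3$, $\Lambda=\{1\}$. Writing $\det u=\mathrm{sgn}(\sigma)\prod_i\epsilon_i$ with $\sigma$ the underlying permutation and $\epsilon_i$ the nonzero entries,
\[ \sum_{\lambda\in\Lambda}\chi_{m+2l}(\lambda) = \left(\sum_\sigma\mathrm{sgn}(\sigma)^{m+2l}\right)\left(\sum_\epsilon \epsilon^{m+2l}\right)^n. \]
For $N=1$, $\epsilon$ ranges over $\{\pm 1,\pm i\}$ and the inner sum vanishes precisely when $4\nmid m+2l$; for $N=2$ only the identity permutation occurs and $\epsilon\in\{\pm 1\}$, so the inner sum vanishes precisely when $m+2l$ is odd, equivalently $m$ is odd. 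This settles \ref{prop:054:1}, with Lem.\ \ref{lem:035}\ref{lem:035:2} transferring the vanishing from $P_{\Gamma_n(N)}F_{\det^l,m}$ to $P_{\Gamma_n(N)}f_{\det^l,m}$.

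For \ref{prop:054:2}, the same factorisation shows that when neither \ref{enum:050:1} nor \ref{enum:050:2} holds, every $\lambda\in\Lambda$ satisfies $\chi_{m+2l}(\lambda)=1$ (in case $N=1$, $4\mid m+2l$ kills every fourth root; in case $N=2$, $m+2l$ is even and $\det u\in\{\pm 1\}$; for $N\geq 3$ it is trivial). Consequently $F_{\det^l,m}$ descends to $L^1(\Lambda\backslash\Sp_{2n}(\R))$ and $P_{\Gamma_n(N)}F_{\det^l,m}=|\Lambda|\,P_{\Lambda\backslash\Gamma_n(N)}F_{\det^l,m}$. I then apply Lem.\ \ref{lem:042} with $\Gamma=\Gamma_n(N)$, this $\Lambda$, $\varphi=F_{\det^l,m}$, and $C=C_R$ for $R=\artanh\sqrt{M(N)}$. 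Condition \ref{enum:042:0} is immediate from $KC_R=C_R$; condition \ref{enum:042:1} follows from Lemmas \ref{lem:044}--\ref{lem:045} together with \eqref{eq:046} exactly as in the proof of Prop.\ \ref{prop:043}; and condition \ref{enum:042:2} becomes the defining inequality \eqref{eq:054} of $N_0(\det^l,m)$ via \eqref{eq:047} and \eqref{eq:053} together with $\vol(\mathrm U(n))=1$. Hence $P_{\Lambda\backslash\Gamma_n(N)}F_{\det^l,m}\not\equiv 0$, so $P_{\Gamma_n(N)}F_{\det^l,m}\not\equiv 0$, and the corresponding non-vanishing of $P_{\Gamma_n(N)}f_{\det^l,m}$ follows from Lem.\ \ref{lem:035}\ref{lem:035:2}.

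The main obstacle is the explicit description of $\Sp_{2n}(\Z)\cap K$ for $N=1$ and the recognition that $\det u$ is a fourth root of unity rather than a sign; this is exactly what produces the divisibility condition ``$4\nmid m+2l$'' in \ref{enum:050:1} in place of a mere parity condition. Once this description is in hand, both parts are formal consequences of \eqref{eq:051} and the analytic setup already established in Sect.\ \ref{sec:081}.
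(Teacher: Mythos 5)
Your proposal is correct and follows essentially the same route as the paper's proof: identify $\Gamma_n(N)\cap K$ explicitly, use the transformation law \eqref{eq:051} to factor out the character sum $\sum_{\lambda\in\Gamma_n(N)\cap K}\chi_{m+2l}(\lambda)$ (which vanishes exactly under \ref{enum:050:1} or \ref{enum:050:2}), and otherwise reduce to $P_{(\Gamma_n(N)\cap K)\backslash\Gamma_n(N)}F_{\det^l,m}$ and apply Lem.\ \ref{lem:042} with $C=C_{\artanh\sqrt{M(N)}}$ as in Prop.\ \ref{prop:043}. Your explicit factorisation of the character sum into permutation and diagonal contributions is just a more detailed rendering of the paper's observation that $\chi_{m+2l}$ restricts to a non-trivial character in the vanishing cases.
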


\begin{proof}
	Let $ \mathcal S_n $ denote the group of permutation matrices in $ M_n(\R) $. We note that 
	\[ \Gamma_n(N)\cap K=\begin{cases}
	\big\{k_u:u\in\mathrm{Diag}_n\left(\left\{\pm1,\pm i\right\}\right)\mathcal S_n\big\},&\text{if }N=1\\
	\big\{k_u:u\in\mathrm{Diag}_n\left(\left\{\pm1\right\}\right)\big\},&\text{if }N=2\\
	\left\{1\right\},&\text{if }N\geq3.
	\end{cases} \]
	Thus, if \ref{enum:050:1} or \ref{enum:050:2} holds, then $ \chi_{m+2l}\big|_{\Gamma_n(N)\cap K} $ is a non-trivial character of $ \Gamma_n(N)\cap K $, hence
	\[ P_{\Gamma_n(N)\cap K}F_{\det^l,m}\overset{\eqref{eq:051}}=\left(\sum_{\gamma\in\Gamma_n(N)\cap K}\chi_{m+2l}(\gamma)\right)F_{\det^l,m}=0 \]
	and consequently $ P_{\Gamma_n(N)}F_{\det^l,m}=P_{(\Gamma_n(N)\cap K)\backslash\Gamma_n(N)}P_{\Gamma_n(N)\cap K}F_{\det^l,m}=0 $, which proves \ref{prop:054:1}.
	On the other hand, if neither \ref{enum:050:1} nor \ref{enum:050:2} holds, then $ \chi_{m+2l}\big|_{\Gamma_n(N)\cap K}=1 $, hence by \eqref{eq:051} the function $ F_{\det^l,m} $ is $ (\Gamma_n(N)\cap K) $-invariant on the left, so
	\[ P_{\Gamma_n(N)}F_{\det^l,m}=\abs{\Gamma_n(N)\cap K}\ P_{(\Gamma_n(N)\cap K)\backslash\Gamma_n(N)}F_{\det^l,m}, \]
	and by applying Lem.\ \ref{lem:042} to the Poincar\'e series $ P_{(\Gamma_n(N)\cap K)\backslash\Gamma_n(N)}F_{\det^l,m} $, with the set $ C $ exactly as in the proof of Prop.\ \ref{prop:043}, we obtain \ref{prop:054:2}.
\end{proof}

In the cases when $ n\in\left\{1,2\right\} $, the values of $ N_0(\det^l,m) $ for some small choices of $ l $ and $ m $ are given in Table \ref{table:101} below. These values were computed using Wolfram Mathematica 12.1 \cite{wolfram}; the code can be found in \cite{zunar22}.

\begin{table}[ht]
	\caption{Some values of $ N_0(\det^l,m) $ in the cases when $ n\in\{1,2\} $.}\label{table:101}
	\centering\footnotesize
	\begin{tabular}{|c|rrrrrrrr|}
		\hline
		\multicolumn{9}{|c|}{\vphantom{$ \bigg| $}\normalsize $ n=1 $}\\ [-0.5ex]
		\hline
		\backslashbox{$l$}{$m$}& 3 & 4 & 5 & 6 & 7 & 8 & 9 & 10  \\
		\hline
		0 & 14 & 6 & 4 & 4 & 3 & 3 & 3 & 2  \\
		1 & 23 & 9 & 6 & 5 & 4 & 4 & 3 & 3  \\
		2 & 32 & 12 & 8 & 6 & 5 & 5 & 4 & 4   \\
		3 & 40 & 15 & 10 & 7 & 6 & 5 & 5 & 4  \\
		4 & 49 & 18 & 11 & 9 & 7 & 6 & 5 & 5  \\
		5 & 58 & 21 & 13 & 10 & 8 & 7 & 6 & 6 \\
		6 & 67 & 24 & 15 & 11 & 9 & 8 & 7 & 6  \\
		7 & 75 & 26 & 16 & 12 & 10 & 8 & 7 & 7  \\
		8 & 84 & 29 & 18 & 13 & 11 & 9 & 8 & 7 \\
		9 & 93 & 32 & 20 & 15 & 12 & 10 & 9 & 8 \\
		10 & 102 & 35 & 22 & 16 & 13 & 11 & 9 & 8  \\
		11 & 111 & 38 & 23 & 17 & 14 & 12 & 10 & 9  \\
		12 & 119 & 41 & 25 & 18 & 15 & 12 & 11 & 10  \\
		\hline
	\end{tabular}\qquad
	\begin{tabular}{|c|rrrrrrrr|}
		\hline
		\multicolumn{9}{|c|}{\vphantom{$ \bigg| $} \normalsize$ n=2 $}\\ [-0.5ex]
		\hline
		\backslashbox{$l$}{m}& 5 & 6 & 7 & 8 & 9 & 10 & 11 & 12  \\ 
		\hline
		0 & 77 & 25 & 15 & 11 & 9 & 8 & 7 & 6 \\
		1 & 107 & 33 & 20 & 14 & 11 & 10 & 8 & 8\\
		2 & 137 & 41 & 24 & 17 & 14 & 11 & 10 & 9  \\
		3 & 167 & 49 & 28 & 20 & 16 & 13 & 11 & 10  \\
		4 & 197 & 58 & 33 & 23 & 18 & 15 & 13 & 11  \\
		5 & 227 & 66 & 37 & 26 & 20 & 17 & 14 & 12  \\
		6 & 257 & 74 & 41 & 29 & 22 & 18 & 16 & 14  \\
		7 & 287 & 82 & 46 & 32 & 24 & 20 & 17 & 15  \\
		8 & 317 & 90 & 50 & 34 & 26 & 22 & 18 & 16  \\
		9 & 347 & 98 & 54 & 37 & 29 & 23 & 20 & 17  \\
		10 & 377 & 107 & 59 & 40 & 31 & 25 & 21 & 18  \\
		11 & 407 & 115 & 63 & 43 & 33 & 27 & 22 & 19  \\
		12 & 437 & 123 & 67 & 46 & 35 & 28 & 24 & 21  \\
		\hline
	\end{tabular}
\end{table}

\section{Petersson inner products of certain automorphic forms}\label{sec:082}

In the case when $ n=1 $, the following proposition follows from \cite[Thm.\ 2-11]{muic12}.

\begin{proposition}\label{prop:065}
	Let $ \Gamma $ be a congruence subgroup of $ \Sp_{2n}(\Z) $, and let $ m\in\Z_{>2n} $.  Let $ \varphi\in\calA_{cusp}(\Gamma\backslash\Sp_{2n}(\R))_{[\chi_m]}^{\mathfrak p_\C^-} $. Then,
	\begin{equation}\label{eq:064}
		\scal\varphi{P_\Gamma F_{1,m}}_{L^2(\Gamma\backslash\Sp_{2n}(\R))}=C_{m,n}\,\varphi(1_{\Sp_{2n}(\R)}). 
	\end{equation}
\end{proposition}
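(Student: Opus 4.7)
The plan is to unfold the Poincar\'e series, recognize the resulting scalar integral as the value at $1_{\Sp_{2n}(\R)}$ of a convolution operator applied to $\varphi$, and compute that convolution via Schur orthogonality on the irreducible subspace of $L^2_{cusp}$ generated by $\varphi$. First, since $\varphi$ is bounded and $F_{1,m}$ is a $K$-finite matrix coefficient of the integrable representation $\pi_m^*$ (by Prop.\ \ref{prop:057}\ref{prop:057:2} and Lem.\ \ref{lem:058}), in particular $F_{1,m}\in L^1(\Sp_{2n}(\R))$, so the standard unfolding gives
\[
\scal{\varphi}{P_\Gamma F_{1,m}}_{L^2(\Gamma\backslash\Sp_{2n}(\R))}=\int_{\Sp_{2n}(\R)}\varphi(g)\,\overline{F_{1,m}(g)}\,dg.
\]
Let $R$ denote the right regular representation on $L^2(\Gamma\backslash\Sp_{2n}(\R))$ and, writing $\alpha(g)=\overline{F_{1,m}(g)}$, consider the bounded operator $R(\alpha):=\int_{\Sp_{2n}(\R)}\alpha(g)R(g)\,dg$. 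Fubini together with boundedness of $\varphi$ shows that the smooth function $h\mapsto\int_{\Sp_{2n}(\R)}\alpha(g)\varphi(hg)\,dg$ is the canonical representative of $R(\alpha)\varphi$, so the unfolded integral equals $(R(\alpha)\varphi)(1_{\Sp_{2n}(\R)})$.

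By Lem.\ \ref{lem:017}, $\varphi$ lies in a closed irreducible $\Sp_{2n}(\R)$-invariant subspace $E\subseteq L^2_{cusp}(\Gamma\backslash\Sp_{2n}(\R))$ together with a unitary $\Sp_{2n}(\R)$-equivariant isomorphism $T\colon H_m^*\to E$, and \eqref{eq:037} combined with $E_{[\chi_m]}=\C\varphi$ forces $T(f_{1,m}^*)=\beta\varphi$ for some $\beta\in\C^\times$. The operator $R(\alpha)$ preserves $E$ and equals $T\pi_m^*(\alpha)T^{-1}$ on it. Using $F_{1,m}=C_{m,n}^{-1}c_{f_{1,m}^*,f_{1,m}^*}$ from Prop.\ \ref{prop:057}\ref{prop:057:2}, Schur orthogonality for discrete series computes
\[
\pi_m^*(\alpha)v=C_{m,n}^{-1}\!\int_{\Sp_{2n}(\R)}\!\overline{c_{f_{1,m}^*,f_{1,m}^*}(g)}\,\pi_m^*(g)v\,dg=C_{m,n}^{-1}\,d_{\pi_m^*}^{-1}\scal{v}{f_{1,m}^*}_{H_m^*}f_{1,m}^*,
\]
where $d_{\pi_m^*}$ is the formal degree of $\pi_m^*$. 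In particular $\pi_m^*(\alpha)f_{1,m}^*=d_{\pi_m^*}^{-1}f_{1,m}^*$, and applying $T$ gives $R(\alpha)\varphi=d_{\pi_m^*}^{-1}\varphi$.

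It remains to identify $d_{\pi_m^*}^{-1}=C_{m,n}$, for which Schur's norm formula gives $\|c_{f_{1,m}^*,f_{1,m}^*}\|_{L^2(\Sp_{2n}(\R))}^2=d_{\pi_m^*}^{-1}\|f_{1,m}^*\|_{H_m^*}^4=d_{\pi_m^*}^{-1}C_{m,n}^2$, whereas directly $\|c_{f_{1,m}^*,f_{1,m}^*}\|_{L^2(\Sp_{2n}(\R))}^2=C_{m,n}^2\,\|F_{1,m}\|_{L^2(\Sp_{2n}(\R))}^2=C_{m,n}^3$ by \eqref{eq:085}. Comparing yields $d_{\pi_m^*}^{-1}=C_{m,n}$, so $R(\alpha)\varphi=C_{m,n}\varphi$ as smooth functions, and evaluating at $1_{\Sp_{2n}(\R)}$ produces \eqref{eq:064}.

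The main obstacle I expect is the careful bookkeeping in the first step, where one must verify that the $L^2$-valued Bochner integral $R(\alpha)\varphi$ admits the pointwise representative $h\mapsto\int_{\Sp_{2n}(\R)}\alpha(g)\varphi(hg)\,dg$ on the nose---this uses Fubini (valid because $\alpha\in L^1$ and $\varphi$ is bounded) together with the fact that both sides of the eventual equality $R(\alpha)\varphi=C_{m,n}\varphi$ are smooth, so that the a.e.\ equality automatically lifts to a pointwise one.
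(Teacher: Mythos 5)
Your proof is correct and follows essentially the same architecture as the paper's: unfold the Poincar\'e series, identify the resulting integral as $\bigl(R\bigl(\overline{F_{1,m}}\bigr)\varphi\bigr)(1_{\Sp_{2n}(\R)})$, use Lem.\ \ref{lem:017} to see that $\varphi$ spans the $\chi_m$-isotypic line of an irreducible subspace equivalent to $\pi_m^*$, conclude $R\bigl(\overline{F_{1,m}}\bigr)\varphi=\lambda\varphi$, and compute $\lambda$. The only divergence is in the last step: the paper transports the eigenvector equation to the matrix-coefficient model $E_m\subseteq L^2(\Sp_{2n}(\R))$ and reads off $\lambda=\norm{F_{1,m}}_{L^2}^2/F_{1,m}(1_{\Sp_{2n}(\R)})=C_{m,n}$ by evaluating at the identity, whereas you invoke the Schur orthogonality relations and pin down the formal degree by comparing $\norm{c_{f_{1,m}^*,f_{1,m}^*}}_{L^2}^2$ computed two ways; both computations are valid and yield the same constant, so this is a cosmetic rather than structural difference.
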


\begin{proof}
	We adapt the argument of \cite[proof of Thm.\ 5.4]{zunar18}: Without loss of generality, suppose that $ \varphi\not\equiv0 $.
	For any $ F\in L^1(\Sp_{2n}(\R)) $ and $ \psi\in L^2(\Gamma\backslash\Sp_{2n}(\R)) $, $ R(F)\psi\in L^2(\Gamma\backslash\Sp_{2n}(\R)) $ is standardly defined by the condition
	\[ \scal{R(F)\psi}\phi_{L^2(\Gamma\backslash\Sp_{2n}(\R))}=\int_{\Sp_{2n}(\R)}F(h)\scal{R(h)\psi}\phi_{L^2(\Gamma\backslash\Sp_{2n}(\R))}\,dh \]
	for all $ \phi\in L^2(\Gamma\backslash\Sp_{2n}(\R)) $.
	It is well-known that
	\begin{equation}\label{eq:056}
		(R(F)\psi)(g)=\int_{\Sp_{2n}(\R)}F(h)\psi(gh)\,dh\qquad\text{for a.a.\ }g\in\Sp_{2n}(\R). 
	\end{equation}
	If $ \psi=\varphi $ and $ F=\overline{F_{1,m}} $, then the right-hand side of \eqref{eq:056} defines a continuous complex function of $ g\in\Sp_{2n}(\R) $ by the dominated convergence theorem  since $ \varphi $ is continuous and bounded, and $ F_{1,m} $ is in $ L^1(\Sp_{2n}(\R)) $ as a $ K $-finite matrix coefficient of the integrable representation $ \pi_m^* $ (see Prop.\ \ref{prop:057}\ref{prop:057:2} and Lem.\ \ref{lem:058}). In particular,
	\begin{equation}\label{eq:060}
		\begin{aligned}
		\left(R\left(\overline{F_{1,m}}\right)\varphi\right)(1_{\Sp_{2n}(\R)})&\overset{\phantom{\eqref{eq:088}}}=\int_{\Sp_{2n}(\R)}\overline{F_{1,m}(h)}\,\varphi(h)\,dh\\
		&\overset{\eqref{eq:088}}=\int_{\Gamma\backslash\Sp_{2n}(\R)}\sum_{\gamma\in\Gamma}\overline{F_{1,m}(\gamma h)}\,\varphi(h)\,dh\\
		&\overset{\phantom{\eqref{eq:088}}}=\scal\varphi{P_\Gamma F_{1,m}}_{L^2(\Gamma\backslash\Sp_{2n}(\R))}.
	\end{aligned}
	\end{equation}
	
	On the other hand, by Lem.\ \ref{lem:017}, $ \varphi $ spans the $ \chi_m $-isotypic component of an irreducible closed $ \Sp_{2n}(\R) $-invariant subspace $ H_\varphi $ of $ L^2(\Gamma\backslash\Sp_{2n}(\R)) $ that is unitarily equivalent to $ H_m^* $. Obviously, $ R\left(\overline{F_{1,m}}\right)\varphi\in H_\varphi $. More precisely, $ R\left(\overline{F_{1,m}}\right)\varphi\in (H_\varphi)_{[\chi_m]} $ since
	\[ 	\begin{aligned}
		\left(R\left(\overline{F_{1,m}}\right)\varphi\right)(gk)&\overset{\phantom{\eqref{eq:051}}}=\int_{\Sp_{2n}(\R)}\overline{F_{1,m}(h)}\,\varphi(gkh)\,dh\\
		&\overset{\phantom{\eqref{eq:051}}}=\int_{\Sp_{2n}(\R)}\overline{F_{1,m}(k^{-1}h)}\,\varphi(gh)\,dh\\
		&\overset{\eqref{eq:051}}=\overline{\chi_{m}(k^{-1})}\,\int_{\Sp_{2n}(\R)}\overline{F_{1,m}(h)}\,\varphi(gh)\,dh\\
		&\overset{\phantom{\eqref{eq:051}}}=\chi_{m}(k)\,\left(R\left(\overline{F_{1,m}}\right)\varphi\right)(g)
	\end{aligned}  \]
	for all $ g\in\Sp_{2n}(\R) $ and $ k\in K $. It follows that 
	\begin{equation}\label{eq:058}
		 R\left(\overline{F_{1,m}}\right)\varphi=\lambda\varphi 
	\end{equation}
	for some $ \lambda\in\C $. 
	
	By \eqref{eq:060} and \eqref{eq:058},
	\begin{equation}\label{eq:062}
		\scal\varphi{P_\Gamma F_{1,m}}_{L^2(\Gamma\backslash\Sp_{2n}(\R))}=\lambda\varphi(1_{\Sp_{2n}(\R)}).
	\end{equation}
	In order to compute $ \lambda $, we recall that by \cite[proof of Prop.\ 1.3.3]{wallachI}, the closed $ \Sp_{2n}(\R) $-invariant subspace 
	$ E_m=\left\{c_{h^*,f_{1,m}^*}:h\in H_m\right\} $
	 of $ (R,L^2(\Sp_{2n}(\R))) $ is unitarily equivalent to $ H_m^* $ via the assignment $ c_{h^*,f_{1,m}^*}\mapsto h^* $. Let us fix a unitary equivalence $ \Phi:H_\varphi\to E_m $. Since $ (H_\varphi)_{[\chi_m]}=\C\varphi $, and $ (E_m)_{[\chi_m]}=\C c_{f_{1,m}^*,f_{1,m}^*} $ is spanned by $ F_{1,m} $ by Prop.\ \ref{prop:057}\ref{prop:057:2}, we have
	\[ \Phi\varphi=\nu F_{1,m} \] 
	for some $ \nu\in\C^\times $. By applying $ \Phi $ to \eqref{eq:058}, we obtain an equality of continuous functions
	\[ R\left(\overline{F_{1,m}}\right)F_{1,m}=\lambda F_{1,m}, \]
	from which it follows that 
	\begin{equation}\label{eq:061}
		\lambda=\frac{\left(R\left(\overline{F_{1,m}}\right)F_{1,m}\right)(1_{\Sp_{2n}(\R)})}{F_{1,m}(1_{\Sp_{2n}(\R)})}=\frac{\norm{F_{1,m}}^2_{L^2(\Sp_{2n}(\R))}}{1}\overset{\eqref{eq:085}}=C_{m,n},
	\end{equation}
	where in the second equality, the equality of numerators is proved exactly as the first equality in \eqref{eq:060}, and the equality of denominators holds by \eqref{eq:045}. 
\end{proof}

By Prop.\ \ref{prop:034}, Lem.\ \ref{lem:035}\ref{lem:035:1}, and \eqref{eq:013}, Prop.\ \ref{prop:065} implies the following

\begin{corollary}\label{cor:079}
	Let $ \Gamma $ be a congruence subgroup of $ \Sp_{2n}(\Z) $, and let $ m\in\Z_{>2n} $. Then,
	\begin{equation}\label{eq:070}
		\scal f{P_\Gamma f_{1,m}}_{\Gamma\backslash\calH_n}=C_{m,n}\,f(iI_n),\qquad f\in S_m(\Gamma).
	\end{equation}
\end{corollary}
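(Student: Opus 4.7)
The corollary is essentially a transport of Prop.\ \ref{prop:065} through the unitary isomorphism $\Lambda_m$ of Prop.\ \ref{prop:034}, so the plan is short and mostly bookkeeping. Let $f\in S_m(\Gamma)$. The first step is to observe that $\Lambda_m f = F_f$ lies in $\calA_{cusp}(\Gamma\backslash\Sp_{2n}(\R))_{[\chi_m]}^{\mathfrak p_\C^-}$ by Prop.\ \ref{prop:034}, which is precisely the space to which Prop.\ \ref{prop:065} applies.

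Next, I would translate the right-hand side of the Petersson pairing. By Thm.\ \ref{thm:069}, the Poincar\'e series $P_\Gamma f_{1,m}$ converges absolutely and uniformly on compact sets and belongs to $S_m(\Gamma)$, and by Lem.\ \ref{lem:035}\ref{lem:035:1} we have
\[
F_{P_\Gamma f_{1,m}} \;=\; P_\Gamma F_{f_{1,m}} \;=\; P_\Gamma F_{1,m}.
\]
Since $\Lambda_m$ is a unitary isomorphism between $S_m(\Gamma)$ (with the Petersson inner product \eqref{eq:017}) and its image in $L^2(\Gamma\backslash\Sp_{2n}(\R))$, this yields
\[
\scal{f}{P_\Gamma f_{1,m}}_{\Gamma\backslash\calH_n}
\;=\; \scal{F_f}{P_\Gamma F_{1,m}}_{L^2(\Gamma\backslash\Sp_{2n}(\R))}.
\]

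Applying Prop.\ \ref{prop:065} to $\varphi = F_f$, the right-hand side equals $C_{m,n}\,F_f(1_{\Sp_{2n}(\R)})$. Finally, evaluating \eqref{eq:013} at $g = 1_{\Sp_{2n}(\R)}$ (for which $x=0$, $y=I_n$, $k_u=1$) gives $F_f(1_{\Sp_{2n}(\R)}) = f(iI_n)$, which produces \eqref{eq:070}. There is no real obstacle here; the only item to double-check is that the normalizations of the two inner products agree through $\Lambda_m$, and this is exactly the isometry statement already established in the proof of Prop.\ \ref{prop:034} via \eqref{eq:016}, \eqref{eq:013}, and \eqref{eq:017}.
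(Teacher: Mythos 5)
Your argument is correct and is essentially identical to the paper's (the paper derives Cor.\ \ref{cor:079} in one line from Prop.\ \ref{prop:065} via Prop.\ \ref{prop:034}, Lem.\ \ref{lem:035}\ref{lem:035:1}, and \eqref{eq:013}, exactly the three ingredients you use). The bookkeeping on the isometry and the evaluation $F_f(1_{\Sp_{2n}(\R)})=f(iI_n)$ is all in order.
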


\section{Connection to the reproducing kernel for $ S_m(\Gamma) $}\label{sec:083}

Let $ \Gamma $ be a congruence subgroup of $ \Sp_{2n}(\Z) $, and let $ m\in\Z_{>2n} $. As an application of Cor.\ \ref{cor:079}, in Thm.\ \ref{thm:080} below we recover, for every $ \xi\in\calH_n $, a well-known formula for the Siegel cusp form $ \Delta_{\Gamma,m,\xi}\in S_m(\Gamma)$ that represents, in the sense of the Riesz representation theorem, the evaluation at $ \xi $ of Siegel cusp forms in $ S_m(\Gamma) $ (cf.\ \cite[Ch.\ 3, \S6, Thm.\ 1]{klingen}). In a sense, the Siegel cusp forms $ \Delta_{\Gamma,m,\xi} $ comprise the reproducing kernel, known already to Godement \cite[p.\ 30]{godement10}, of the Hilbert space $ S_m(\Gamma) $. We start with the following

\begin{comment} and \cite[Thm.\ 5 and the last formula in \S8]{godement6}
\end{comment}

\begin{lemma}\label{lem:066}
	Let $ \Gamma $ be a congruence subgroup of $ \Sp_{2n}(\Z) $ and $ g\in\Sp_{2n}(\Q) $. Let $ \Gamma^g=g\Gamma g^{-1} $. Then,  
	\[ \Gamma^g_\Z:=\Gamma^g\cap\Sp_{2n}(\Z) \]
	is a congruence subgroup of $ \Sp_{2n}(\Z) $, and we have 
	\begin{equation}\label{eq:067}
		\left\{f\big|_mg^{-1}:f\in S_m(\Gamma)\right\}\subseteq S_m(\Gamma^g_\Z). 
	\end{equation}
\end{lemma}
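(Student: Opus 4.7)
The plan is to verify the two claims in order. For the first claim, since $\Gamma$ is a congruence subgroup, fix a level $N\in\Z_{>0}$ with $\Gamma_n(N)\subseteq\Gamma$. Since $g\in\Sp_{2n}(\Q)$, I choose a positive integer $M$ such that both $Mg$ and $Mg^{-1}$ belong to $M_{2n}(\Z)$. I claim that
\[ \Gamma_n(M^2N)\subseteq\Gamma^g_\Z. \]
To see this, take $\gamma\in\Gamma_n(M^2N)$ and write $\gamma=I_{2n}+M^2N\alpha$ with $\alpha\in M_{2n}(\Z)$. Then
\[ g^{-1}\gamma g = I_{2n}+M^2N\,g^{-1}\alpha g = I_{2n}+N\,(Mg^{-1})\alpha(Mg), \]
and the right-hand side lies in $I_{2n}+N\,M_{2n}(\Z)$ since $(Mg^{-1})\alpha(Mg)\in M_{2n}(\Z)$. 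As $g^{-1}\gamma g\in\Sp_{2n}(\Q)$ has integer entries and is congruent to $I_{2n}$ modulo $N$, it belongs to $\Gamma_n(N)\subseteq\Gamma$. Hence $\gamma=g(g^{-1}\gamma g)g^{-1}\in g\Gamma g^{-1}\cap\Sp_{2n}(\Z)=\Gamma^g_\Z$, showing that $\Gamma^g_\Z$ is a congruence subgroup of $\Sp_{2n}(\Z)$.

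For the second claim, fix $f\in S_m(\Gamma)$ and let $h=f\big|_mg^{-1}$. The function $h$ is holomorphic on $\calH_n$ because the generalized linear fractional transformation by $g^{-1}$ is a biholomorphism of $\calH_n$ and $j(g^{-1},\spacedcdot)^{-m}$ is a nowhere vanishing holomorphic function. For the invariance, let $\gamma\in\Gamma^g_\Z$ and write $\gamma=g\delta g^{-1}$ for some $\delta\in\Gamma$; then by the cocycle property and the $\Gamma$-invariance of $f$,
\[ h\big|_m\gamma = f\big|_m\bigl(g^{-1}\gamma\bigr)=f\big|_m\bigl(\delta g^{-1}\bigr)=\bigl(f\big|_m\delta\bigr)\big|_mg^{-1}=f\big|_mg^{-1}=h, \]
which gives \ref{enum:002:1}. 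Finally, the growth condition \ref{enum:002:2} follows immediately from \eqref{eq:065}, since
\[ \sup_{z\in\calH_n}\abs{h(z)}\,\det y^{\frac m2}=\sup_{z\in\calH_n}\abs{f(z)}\,\det y^{\frac m2}<\infty. \]
Hence $h\in S_m(\Gamma^g_\Z)$, as required.

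No step is a serious obstacle: the argument is essentially a denominator chase for the congruence subgroup assertion, combined with the cocycle property of $j$ and the invariance identity \eqref{eq:065}. The only point deserving care is the choice of the scaling factor $M^2N$ in the congruence argument, ensuring simultaneously that $g^{-1}\gamma g$ has integer entries and is congruent to $I_{2n}$ modulo $N$.
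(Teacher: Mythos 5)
Your proof is correct and matches the paper's approach: the paper disposes of the first claim by citing \cite[Ch.\ 2, (3.25)]{andrianov_zhuravlev} and states that \eqref{eq:067} is ``proved in an elementary way using \eqref{eq:065}.'' Your denominator chase with the scaling factor $M^2N$ is precisely the standard argument behind that citation, and your verification of holomorphy, $\Gamma^g_\Z$-invariance via the cocycle property, and the growth bound via \eqref{eq:065} is the intended elementary argument for the inclusion.
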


\begin{proof}
	The first claim holds by \cite[Ch.\ 2, (3.25)]{andrianov_zhuravlev}, 
	\begin{comment}
		By this result, $ \left(\Gamma^{g}_\Z\right)^{g^{-1}}_\Z $ is also a congruence subgroup of $ \Sp_{2n}(\Z) $, hence \[ (\Gamma^g:\Gamma^g_\Z)=(\Gamma:\left(\Gamma^g_\Z\right)^{g^{-1}})\leq (\Sp_{2n}(\Z):\left(\Gamma^g_\Z\right)^{g^{-1}}_\Z)<\infty. \]
	\end{comment}
	 and \eqref{eq:067} is proved in an elementary way using \eqref{eq:065}.
\end{proof}

We will also need the following lemma, whose proof is elementary and left to the reader.

\begin{lemma}
	Let $ \Gamma\subseteq\Gamma' $ be discrete subgroups of $ \Sp_{2n}(\R) $. Let $ f_1,f_2:\calH_n\to\C $ be Borel measurable functions such that $ f_r\big|_m\gamma=f_r $ for all $ \gamma\in\Gamma $ and $ r=1,2 $, and suppose that $ \scal{\abs{f_1}}{\abs{f_2}}_{\Gamma\backslash\calH_n}<\infty $. Then:
	\begin{enumerate}[label=\textup{(\roman*)}]
		\item For every $ g\in\Sp_{2n}(\R) $, we have
		\begin{equation}\label{eq:071}
			\scal{f_1}{f_2}_{\Gamma\backslash\calH_n}=\scal{f_1\big|_m g}{f_2\big|_mg}_{g^{-1}\Gamma g\backslash\calH_n}.
		\end{equation}
		\item If $ f_1\big|_m\gamma'=f_1 $ for all $ \gamma'\in\Gamma' $, then
		\begin{equation}\label{eq:072}
			\scal{f_1}{f_2}_{\Gamma\backslash\calH_n}=\scal{f_1}{\sum_{\gamma'\in\Gamma\backslash\Gamma'}f_2\big|_m\gamma'}_{\Gamma'\backslash\calH_n}.
		\end{equation}
	\end{enumerate}
\end{lemma}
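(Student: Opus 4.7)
My plan is to reduce both parts of the lemma to a single pointwise identity and then handle the remainder by measure-theoretic bookkeeping. The key identity I would establish first is that, for $ g\in\mathrm{Sp}_{2n}(\R) $ and $ z=x+iy\in\calH_n $, formula \eqref{eq:077} gives $ \det(\Im(g.z))^m=\abs{j(g,z)}^{-2m}\det y^m $, so combining this with the definition \eqref{eq:074} of the slash action yields
\[
	(f_1\big|_m g)(z)\,\overline{(f_2\big|_m g)(z)}\,\det y^m \;=\; f_1(g.z)\,\overline{f_2(g.z)}\,\det(\Im(g.z))^m.
\]
Both parts reduce to this transformation rule paired with an invariant change of variables.

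For \eqref{eq:071}, I would substitute $ w=g.z $ in the right-hand integral. The map $ z\mapsto g.z $ carries $ g^{-1}\Gamma g $-orbits bijectively onto $ \Gamma $-orbits (so a fundamental domain for one is sent to a fundamental domain for the other) and preserves $ d\mathsf v $ by the $ \mathrm{Sp}_{2n}(\R) $-invariance of the measure. The pointwise identity then converts the integrand into $ f_1(w)\overline{f_2(w)}\det(\Im w)^m $. Since $ \{\pm I_{2n}\} $ is central, $ \varepsilon_{g^{-1}\Gamma g}=\varepsilon_\Gamma $, so the prefactors match and \eqref{eq:071} follows.

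For \eqref{eq:072} I would unfold. Writing $ \overline H=H/(H\cap\{\pm I_{2n}\}) $ for $ H\in\{\Gamma,\Gamma'\} $ (using that $ \{\pm I_{2n}\} $ acts trivially on $ \calH_n $), I fix a fundamental domain $ \mathcal F' $ for $ \overline{\Gamma'} $, so that $ \bigsqcup_{\bar\gamma'\in\overline\Gamma\backslash\overline{\Gamma'}}\bar\gamma'.\mathcal F' $ is a fundamental domain for $ \overline\Gamma $ up to a null set. Changing variables $ z=\gamma'.w $ on each translate and invoking $ f_1\big|_m\gamma'=f_1 $ together with the pointwise identity, the left-hand side of \eqref{eq:072} becomes
\[
	\varepsilon_\Gamma^{-1}\sum_{\bar\gamma'\in\overline\Gamma\backslash\overline{\Gamma'}}\int_{\mathcal F'} f_1(w)\,\overline{(f_2\big|_m\gamma')(w)}\,\det(\Im w)^m\,d\mathsf v(w),
\]
where $ \gamma'\in\Gamma' $ is any lift of $ \bar\gamma' $.

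The main obstacle, which is purely bookkeeping, will be reconciling this sum over $ \overline\Gamma\backslash\overline{\Gamma'} $ with the sum over $ \Gamma\backslash\Gamma' $ (and the prefactor $ \varepsilon_{\Gamma'}^{-1} $) appearing in the statement. The projection $ \Gamma\backslash\Gamma'\to\overline\Gamma\backslash\overline{\Gamma'} $ has fibers of constant size $ \varepsilon_{\Gamma'}/\varepsilon_\Gamma\in\{1,2\} $, and the nontrivial case is $ -I_{2n}\in\Gamma'\setminus\Gamma $, where $ \bar\gamma' $ has the two lifts $ \gamma',-\gamma' $. Here I would use that $ f\big|_m(-I_{2n})=(-1)^{mn}f $ for any $ f $: applied to $ f_1 $, the $ \Gamma' $-invariance forces $ (-1)^{mn}=1 $ (or else $ f_1\equiv 0 $, in which case both sides of \eqref{eq:072} vanish), whence $ f_2\big|_m\gamma'=f_2\big|_m(-\gamma') $. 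Both lifts therefore contribute identically, producing precisely the factor $ \varepsilon_{\Gamma'}/\varepsilon_\Gamma $ needed to turn $ \varepsilon_\Gamma^{-1} $ into $ \varepsilon_{\Gamma'}^{-1} $ and close the identity.
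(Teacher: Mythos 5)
The paper offers no proof to compare against---it declares this lemma ``elementary and left to the reader''---so the only question is whether your argument is correct, and it is: the pointwise identity $(f_1\big|_mg)\overline{(f_2\big|_mg)}\det y^m=(f_1\overline{f_2}\det(\Im)^m)\circ(g.\,\cdot\,)$ from \eqref{eq:074} and \eqref{eq:077}, the invariance of $\mathsf v$, and the standard unfolding are exactly the intended route, and you correctly identify and resolve the one genuinely delicate point, namely the mismatch between $\Gamma\backslash\Gamma'$ and $\overline\Gamma\backslash\overline{\Gamma'}$ when $-I_{2n}\in\Gamma'\setminus\Gamma$, via $f\big|_m(-I_{2n})=(-1)^{mn}f$ and the dichotomy $(-1)^{mn}=1$ or $f_1\equiv0$. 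The only step left implicit is the interchange of the sum over $\overline\Gamma\backslash\overline{\Gamma'}$ with the integral over $\mathcal F'$ in part (ii); this is licensed by Tonelli, since running the same unfolding with $\abs{f_1},\abs{f_2}$ in place of $f_1,f_2$ shows the hypothesis $\scal{\abs{f_1}}{\abs{f_2}}_{\Gamma\backslash\calH_n}<\infty$ is precisely the required absolute convergence. With that one sentence added, the proof is complete.
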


\begin{theorem}\label{thm:080}
	Let $ m\in\Z_{>2n} $ and $ \xi=x+iy\in\calH_n $. We define the function $ f_{1,m,\xi}:\calH_n\to\C $,
	\begin{equation}\label{eq:089}
		f_{1,m,\xi}(z)=C_{m,n}^{-1}\,\det\left(\frac1{2i}\left(z-\overline\xi\right)\right)^{-m}.
	\end{equation}
	Let $ \Gamma $ be a congruence subgroup of $ \Sp_{2n}(\Z) $. Then, the Poincar\'e series
	\[ \Delta_{\Gamma,m,\xi}=P_\Gamma f_{1,m,\xi} \]
	converges absolutely and uniformly on compact subsets of $ \calH_n $ and defines a Siegel cusp form $ \Delta_{\Gamma,m,\xi}\in S_m(\Gamma) $ that satisfies
	\begin{equation}\label{eq:076}
		\scal f{\Delta_{\Gamma,m,\xi}}_{\Gamma\backslash\calH_n}=f(\xi),\qquad f\in S_m(\Gamma). 
	\end{equation}
\end{theorem}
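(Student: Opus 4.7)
The plan is to reduce to the already-established case $\xi=iI_n$ of Cor.\ \ref{cor:079} via a translation $g_\xi\in\Sp_{2n}(\R)$ carrying $iI_n$ to $\xi$, and then to carry out the inner-product computation entirely on the group side $\Sp_{2n}(\R)$, reusing the representation-theoretic mechanism that produced Prop.\ \ref{prop:065}. Concretely, I would take $g_\xi=n_x a_y$, for which $g_\xi.(iI_n)=\xi$ and $j(g_\xi^{-1},z)=\det y^{1/2}$; a direct computation from \eqref{eq:041} and \eqref{eq:089} then yields the translation identity
\[
	f_{1,m,\xi}=C_{m,n}^{-1}(\det y)^{-m/2}\,\bigl(f_{1,m}\big|_m g_\xi^{-1}\bigr),
\]
which, after applying $\Phi_m$, becomes
\[
	F_{f_{1,m,\xi}}=C_{m,n}^{-1}(\det y)^{-m/2}\,L(g_\xi)F_{1,m},
\]
exhibiting $F_{f_{1,m,\xi}}$ as a scaled left translate of the matrix coefficient $F_{1,m}$ of the integrable representation $\pi_m^*$ (Prop.\ \ref{prop:057}\ref{prop:057:2}, Lem.\ \ref{lem:058}).

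For convergence and membership in $S_m(\Gamma)$, I would set $\Gamma':=g_\xi^{-1}\Gamma g_\xi$ and observe by reindexing that $P_\Gamma F_{f_{1,m,\xi}}=C_{m,n}^{-1}(\det y)^{-m/2}\,L(g_\xi)\,P_{\Gamma'}F_{1,m}$. The results recalled at the beginning of Sect.\ \ref{sec:039} are stated for arbitrary discrete subgroups of $\Sp_{2n}(\R)$, so they apply to $\Gamma'$ and give absolute, uniform-on-compacts convergence of $P_{\Gamma'}F_{1,m}$ together with its boundedness on $\Sp_{2n}(\R)$. These properties transfer to $P_\Gamma F_{f_{1,m,\xi}}$, and via Lem.\ \ref{lem:035} also to $P_\Gamma f_{1,m,\xi}$; holomorphy and $|_m$-invariance of $\Delta_{\Gamma,m,\xi}$ are then standard, while the growth bound \ref{enum:002:2} follows from the boundedness of $P_\Gamma F_{f_{1,m,\xi}}$ via \eqref{eq:013}.

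For the reproducing identity, given $f\in S_m(\Gamma)$, I would transfer to the group side using Prop.\ \ref{prop:034} and Lem.\ \ref{lem:035}\ref{lem:035:1}, unfold (using that $F_f$ is bounded and $F_{f_{1,m,\xi}}\in L^1(\Sp_{2n}(\R))$), substitute the translation identity, and perform the change of variable $h\mapsto g_\xi h$ (valid by unimodularity of $\Sp_{2n}(\R)$) to obtain
\[
	\scal{f}{P_\Gamma f_{1,m,\xi}}_{\Gamma\backslash\calH_n}=C_{m,n}^{-1}(\det y)^{-m/2}\,\bigl(R(\overline{F_{1,m}})F_f\bigr)(g_\xi).
\]
The core Schur-lemma computation from the proof of Prop.\ \ref{prop:065} (see \eqref{eq:058}--\eqref{eq:061}) established $R(\overline{F_{1,m}})\varphi=C_{m,n}\varphi$ for every $\varphi\in\calA_{cusp}(\Gamma\backslash\Sp_{2n}(\R))_{[\chi_m]}^{\mathfrak p_\C^-}$; applying this with $\varphi=F_f$ and using \eqref{eq:013} to evaluate $F_f(g_\xi)=\det y^{m/2}\,f(\xi)$ gives $(R(\overline{F_{1,m}})F_f)(g_\xi)=C_{m,n}\det y^{m/2}\,f(\xi)$, so the scalars cancel and \eqref{eq:076} follows.

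The main obstacle is that the conjugate lattice $\Gamma'=g_\xi^{-1}\Gamma g_\xi$ is generally not a congruence subgroup, so one cannot simply apply Cor.\ \ref{cor:079} at $\Gamma'$ and pull back to $\Gamma$ via \eqref{eq:071}. The workaround is to stay on the group side and transport the $R(\overline{F_{1,m}})$-calculation of Prop.\ \ref{prop:065} from $1_{\Sp_{2n}(\R)}$ to $g_\xi$; convergence is unproblematic precisely because the Poincar\'e-series results recalled in Sect.\ \ref{sec:039} make no use of arithmeticity of the acting discrete group.
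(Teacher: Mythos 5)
Your proposal is correct, and its first half (the translation identity $f_{1,m,\xi}=C_{m,n}^{-1}\det y^{-m/2}\,f_{1,m}\big|_m(n_xa_y)^{-1}$, the resulting identification of $P_\Gamma F_{f_{1,m,\xi}}$ with a left translate of $P_{g\Gamma g^{-1}}F_{1,m}$, and the deduction of convergence and membership in $S_m(\Gamma)$) is essentially identical to the paper's. Where you genuinely diverge is in the proof of the reproducing identity \eqref{eq:076}. The paper sidesteps the fact that $g\Gamma g^{-1}$ need not be arithmetic by first taking $\xi\in\calH_n\cap M_n(\Q)$, so that $g\in\Sp_{2n}(\Q)$ and Lem.\ \ref{lem:066} produces a genuine congruence subgroup $\Gamma^g_\Z$ to which Cor.\ \ref{cor:079} applies; the identity for rational $\xi$ is then obtained via the inner-product manipulations \eqref{eq:071}--\eqref{eq:072}, and the general case follows by density of $\calH_n\cap M_n(\Q)$ together with a dominated-convergence/continuity argument resting on Klingen's estimate for $\abs{\det(z-\overline\xi)}^{-m}$. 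You instead stay entirely on the group side: you unfold the Petersson pairing to $C_{m,n}^{-1}\det y^{-m/2}\,(R(\overline{F_{1,m}})F_f)(g_\xi)$ and evaluate the Schur-lemma eigenvalue identity $R(\overline{F_{1,m}})\varphi=C_{m,n}\varphi$ of the proof of Prop.\ \ref{prop:065} at the point $g_\xi$ rather than at $1_{\Sp_{2n}(\R)}$ --- which is legitimate, since that identity holds pointwise between continuous functions, exactly as the paper itself uses when evaluating at the identity. Your route buys a uniform treatment of all $\xi$ at once, dispensing with Lem.\ \ref{lem:066}, the rational-approximation step, and the Klingen estimate; the paper's route has the mild advantage of quoting Cor.\ \ref{cor:079} as a black box rather than reopening the internals of the proof of Prop.\ \ref{prop:065}. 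Both are sound.
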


\begin{proof}
	Let us write $ g=(n_xa_y)^{-1} $. One checks easily that
	\begin{equation}\label{eq:073}
		f_{1,m,\xi}=C_{m,n}^{-1}\,\det y^{-\frac m2}f_{1,m}\big|_mg,
	\end{equation}
	from which it follows that
	\begin{equation}\label{eq:075}
		\left(P_{\Gamma}F_{f_{1,m,\xi}}\right)(h)=C_{m,n}^{-1}\,\det y^{-\frac m2}\left(P_{g\Gamma g^{-1}}F_{1,m}\right)(gh),\quad h\in\Sp_{2n}(\R).
	\end{equation}
	Since $ F_{1,m} $ is a $ K $-finite matrix coefficient of the integrable representation $ \pi_m^* $  by Prop.\ \ref{prop:057}\ref{prop:057:2} and \eqref{eq:090}, the series $ P_{g\Gamma g^{-1}}F_{1,m} $ converges absolutely and uniformly on compact subsets of $ \Sp_{2n}(\R) $ and defines a bounded function on $ \Sp_{2n}(\R) $ (see the beginning of Sect.\ \ref{sec:039}), hence by \eqref{eq:075} so does the series $ P_{\Gamma}F_{f_{1,m,\xi}} $. By Lem.\ \ref{lem:035} and \eqref{eq:013}, it follows that the series $ P_{\Gamma}f_{1,m,\xi} $ converges absolutely and uniformly on compact subsets of $ \calH_n $ and satisfies the condition \ref{enum:002:2}. As it obviously satisfies \ref{enum:002:1}, it defines an element of $ S_m(\Gamma) $. 
	
	To prove \eqref{eq:076}, we note that if $ \xi\in\calH_n\cap M_n(\Q) $, then for every $ f\in S_m(\Gamma) $ we have
	\[ \begin{aligned}
		f(\xi)&\overset{\eqref{eq:074}}=\det y^{-\frac m2}\left(f\big|_mg^{-1}\right)(iI_n)\\
		&\underset{\eqref{eq:067}}{\overset{\eqref{eq:070}}=}C_{m,n}^{-1}\,\det y^{-\frac m2}\scal{f\big|_mg^{-1}}{P_{\Gamma^{g}_\Z} f_{1,m}}_{\Gamma^g_\Z\backslash\calH_n}\\
		&\overset{\eqref{eq:071}}=C_{m,n}^{-1}\,\det y^{-\frac m2}\scal{f}{P_{\left(\Gamma^{g}_\Z\right)^{g^{-1}}} \left(f_{1,m}\big|_mg\right)}_{\left(\Gamma^g_\Z\right)^{g^{-1}}\backslash\calH_n}\\
		&\overset{\eqref{eq:072}}=C_{m,n}^{-1}\,\det y^{-\frac m2}\scal{f}{P_{\Gamma} \left(f_{1,m}\big|_mg\right)}_{\Gamma\backslash\calH_n}\\
		&\overset{\eqref{eq:073}}=\scal{f}{P_{\Gamma} f_{1,m,\xi}}_{\Gamma\backslash\calH_n}.
	\end{aligned} \]
	This proves \eqref{eq:076} for $ \xi\in\calH_n\cap M_n(\Q) $. Since $ \calH_n\cap M_n(\Q) $ is dense in $ \calH_n $, the equality \eqref{eq:076} for general $ \xi\in\calH_n $ follows immediately by the continuity of the functions $ I_f:\calH_n\to\C $,
	\[ \begin{aligned}
		I_f(\xi)&\overset{\phantom{\eqref{eq:072}}}=\scal f{\Delta_{\Gamma,m,\xi}}_{\Gamma\backslash\calH_n}=\scal f{P_\Gamma f_{1,m,\xi}}_{\Gamma\backslash\calH_n}\\
		&\overset{\eqref{eq:072}}=\int_{\calH_n}f(z)\,\overline{f_{1,m,\xi}(z)}\,\det\Im(z)^m\,d\mathsf v(z)\\
		&\overset{\eqref{eq:089}}=C_{m,n}^{-1}\,(-2i)^{mn}\int_{\calH_n}f(z)\,\overline{\det\left(z-\overline\xi\right)}^{-m}\,\det\Im(z)^m\,d\mathsf v(z),
	\end{aligned} \]
	where $ f\in S_m(\Gamma) $. 
	Said continuity holds by the dominated convergence theorem, which is applicable since by \cite[Ch.\ III, \S6, (10)]{klingen}, for any choice of the height $ d\in\R_{>0} $ of the vertical strip
	\[ V_n(d)=\left\{\tau\in\calH_n:\mathrm{tr}(\Re(\tau)^2)\leq\frac1d,\ \Im(\tau)\geq dI_n\right\}, \]
	there exists $ \alpha_{n,d}\in\R_{>0} $ such that the following estimate holds for all $ z\in\calH_n $ and $ \xi\in V_n(d) $:
	\[ \abs{\det\left(z-\overline\xi\right)}^{-m}\leq\alpha_{n,d}\abs{\det(z+iI_n)}^{-m}.\qedhere \] 
\end{proof}

\bibliographystyle{amsplain}
\linespread{.96}

\end{document}